\newcommand{\ifims}[2]{#1}   % latex version
\newcommand{\ifAMS}[2]{#1}   % 1 - AMS, 2 - JEL
\newcommand{\ifau}[3]{#2}  % #1 - 1 author, #2 - 2 authors, #3 - 3 authors
\newcommand{\ifbook}[2]{#1}   % 1 - paper; 2 - book
\def\thetitle{Finite Sample Bernstein -- von Mises Theorem for Semiparametric Problems}
\def\theruntitle{Finite Sample BvM Theorem for Semiparametric Problems}
\def\theabstract{
The classical parametric and semiparametric Bernstein -- von Mises (BvM) results are reconsidered in a non-classical setup allowing finite samples and model misspecification. 
  In the case of a finite dimensional nuisance parameter we obtain an upper bound on the error of Gaussian approximation of the posterior distribution for the target parameter 
  which is explicit in the dimension of the nuisance and target parameters. 
  This helps to identify the so called \emph{critical dimension} \( p \) of the full  parameter for which the BvM result is applicable. 
  In the important i.i.d. case, we show that the condition ``\( p^{3} / n \) is small'' is sufficient for BvM result to be valid under general assumptions on the model. 
  We also provide an example of a model with the phase transition effect: the statement of the BvM theorem fails when the dimension \( p \) approaches \( n^{1/3} \). The results are extended to the case of infinite dimensional parameters with the nuisance parameter from a Sobolev class. 
  In particular we show near normality of the posterior if the smoothness parameter \(s \) exceeds 3/2.
}
\def\kwdp{62F15}
\def\kwds{62F25,62H12}
\def\thekeywords{prior, posterior, Bayesian inference, semiparametric, critical dimension}
\def\authorb{Vladimir Spokoiny}
\def\runauthorb{spokoiny, v.}
\def\addressb{
    Weierstrass Institute and \\ Humboldt University Berlin, \\ Moscow Institute of
    Physics and Technology \\
    Mohrenstr. 39, \\
    10117 Berlin, Germany}
\def\emailb{spokoiny@wias-berlin.de}
\def\affiliationb{Weierstrass-Institute, Humboldt University Berlin, and
Moscow Institute of Physics and Technology}
\def\thanksb{
  The author is partially supported by Laboratory for Structural Methods of Data Analysis in Predictive Modeling, MIPT, RF government grant, ag. 11.G34.31.0073. Financial support by the German Research Foundation (DFG) through the Collaborative Research Center 649 ''Economic Risk'' is gratefully acknowledged.
}
\def\authora{Maxim Panov}
\def\runauthora{panov, m.}
\def\addressa{
    Moscow Institute of Physics and Technology, \\
    Institute for Information Transmission Problems of RAS, \\
    Datadvance Company, \\
    Pokrovsky blvd. 3 building 1B, \\
    109028 Moscow, Russia
    }
\def\emaila{maxim.panov@datadvance.net}
\def\affiliationa{Moscow Institute of Physics and Technology, Institute for Information Transmission Problems of RAS and Datadvance}
\def\thanksa{
  The author is partially supported by Laboratory for Structural Methods of Data Analysis in Predictive Modeling, MIPT, RF government grant, ag. 11.G34.31.0073.
}
\date{June 15, 2014 \vspace{5cm}}
\renewenvironment{abstract}
    {\centerline{\textbf{Abstract}}\bigskip
      \begin{center}
       \begin{minipage}{11cm}
        \begin{small}
    }
    {   \end{small}
       \end{minipage}
      \end{center}
     \bigskip
    }
\numberwithin{equation}{section}
\numberwithin{figure}{section}
\newcounter{example}[section]
\numberwithin{example}{section}
\newcounter{remark}[section]
\numberwithin{remark}{section}
\newtheorem{theorem}{Theorem}[section]
\newtheorem{lemma}[theorem]{Lemma}
\newtheorem{corollary}[theorem]{Corollary}
\newtheorem{exmp}[example]{Example}
\newtheorem{rmrk}[remark]{Remark}
\newenvironment{example}{\begin{exmp}\rm}{\end{exmp}}
\newenvironment{remark}{\begin{rmrk}\rm}{\end{rmrk}}
\renewcommand{\textfraction}{0.00}
\renewcommand{\topfraction}{1}
\renewcommand{\bottomfraction}{1}
\begin{document}
\thispagestyle{empty}
\ifims{
\title{\thetitle}
\ifau{ % 1 author
  \author{
    \authora
    \ifdef{\thanksa}{\thanks{\thanksa}}{}
    \\[5.pt]
    \addressa \\
    \texttt{ \emaila}
  }
}
{  % 2 authors
  \author{
    \authora
    \ifdef{\thanksa}{\thanks{\thanksa}}{}
    \\[5.pt]
    \addressa \\
    \texttt{ \emaila}
    \and
    \authorb
    \ifdef{\thanksb}{\thanks{\thanksb}}{}
    \\[5.pt]
    \addressb \\
    \texttt{ \emailb}
  }
}
{   % 3 authors
  \author{
    \authora
    \ifdef{\thanksa}{\thanks{\thanksa}}{}
    \\[5.pt]
    \addressa \\
    \texttt{ \emaila}
    \and
    \authorb
    \ifdef{\thanksb}{\thanks{\thanksb}}{}
    \\[5.pt]
    \addressb \\
    \texttt{ \emailb}
    \and
    \authorc
    \ifdef{\thanksc}{\thanks{\thanksc}}{}
    \\[5.pt]
    \addressc \\
    \texttt{ \emailc}
  }
}

\maketitle
\pagestyle{myheadings}
\markboth
 {\hfill \textsc{ \small \theruntitle} \hfill}
 {\hfill
 \textsc{ \small
 \ifau{\runauthora}
      {\runauthora and \runauthorb}
      {\runauthora, \runauthorb, and \runauthorc}
 }
 \hfill}
\begin{abstract}
\theabstract
\end{abstract}

\ifAMS
    {\par\noindent\emph{AMS 2000 Subject Classification:} Primary \kwdp. Secondary \kwds}
    {\par\noindent\emph{JEL codes}: \kwdp}

\par\noindent\emph{Keywords}: \thekeywords
} % end front latex
{ % front ims
\begin{frontmatter}
\title{\thetitle}
%\thankstext{T1}{\thankstitle}

% "Title of the paper"

\runtitle{\theruntitle}

\ifau{ % 1 author
\begin{aug}
    \author{\authora\ead[label=e1]{\emaila}}
    \address{\addressa \\
     \printead{e1}}
\end{aug}

 \runauthor{\runauthora}
\affiliation{\affiliationa} }
{ % 2 authors
\begin{aug}
    \author{\authora\ead[label=e1]{\emaila}\thanksref{t21}}
    \and
    \author{\authorb\ead[label=e2]{\emailb}\thanksref{t22}}
    
    \address{\addressa \\
     \printead{e1}}
    \address{\addressb \\
     \printead{e2}}
    \thankstext{t21}{\thanksa}
    \thankstext{t22}{\thanksb}
    \affiliation{\affiliationa, \affiliationb} 
    \runauthor{\runauthora and \runauthorb}
\end{aug}
} 
{ % 3 authors
\begin{aug}
    \author{\authora\ead[label=e1]{\emaila}\thanksref{t21}}
    \and
    \author{\authorb\ead[label=e2]{\emailb}\thanksref{t22}}
    \and
    \author{\authorc\ead[label=e3]{\emailc}\thanksref{t23}}
    
    \address{\addressa \\
     \printead{e1}}
    \address{\addressb \\
     \printead{e2}}
    \address{\addressc \\
     \printead{e3}}
    \thankstext{t21}{\thanksa}
    \thankstext{t22}{\thanksb}
    \thankstext{t23}{\thanksc}
    \affiliation{\affiliationa, \affiliationb, \affiliationc} 
    \runauthor{\runauthora, \runauthorb, and \runauthorc}
\end{aug}}

\begin{abstract}
\theabstract
\end{abstract}

\begin{keyword}[class=AMS]
\kwd[Primary ]{\kwdp}
\kwd[; secondary ]{\kwds}
\end{keyword}

\begin{keyword}
\kwd{\thekeywords}
\end{keyword}

\end{frontmatter}
} % end front ims

\def\ND{\cc{N}}
\def\Bernoulli{\mathrm{Bernoulli}}
\def\Vola{\mathrm{Vola}}
\def\Poisson{\mathrm{Poisson}}
\def\ag{\mathrm{ag}}
\def\glob{\operatorname{glob}}
\def\blk{\operatorname{block}}
\def\lin{\operatorname{lin}}
\def\cond{\, \big| \,}

\def\rdl{\epsilon}
\def\rd{\bb{\rdl}}
\def\rddelta{\delta}
\def\rdomega{\varrho}
\def\rddeltab{\rddelta^{*}}
\def\rhorb{\rhor^{*}}

\def\wv{\bb{w}}
\def\varthetav{\bb{\vartheta}}
\def\Lr{\breve{L}}
\def\zetavr{\breve{\zetav}}
\def\etavr{\breve{\etav}}
\def\xivr{\breve{\xiv}}

\def\rdb{\rd}
\def\rdm{\underline{\rdb}}

\def\taub{\tau_{\rdb}}
\def\taum{\tau_{\rdm}}
\def\kappab{\kappa_{\rd}}
\def\deltab{\delta_{\rd}}

\def\taubGP{\tau_{\rdb,\GP}}
\def\taumGP{\tau_{\rdm,\GP}}
\def\kappabGP{\kappa_{\rd,\GP}}
\def\deltabGP{\delta_{\rd,\GP}}
\def\nubm{\nu_{\rd}}
\def\uub{u_{\rd}}
\def\uubGP{u_{\rd,\GP}}
\def\nubmGP{\nu_{\rd, G}}

\def\rG{\rd,\GP}

\def\LinSp{\mathrm{L}}
\def\Id{I\!\!\!I}
\def\Ind{\operatorname{1}\hspace{-4.3pt}\operatorname{I}}

\def\BG{\mathcal{R}}
\def\bg{r}
\def\fmup{\phi}
\def\rg{r}
\def\uc{u_{c}}
\def\muc{\mu_{c}}
\def\mud{\mu_{0}}
\def\xxd{\xx_{0}}
\def\yyd{\yy_{0}}
\def\gmd{\gm_{0}}

\def\ms{m^{*}}
\def\Inv{A}
\def\InvT{\Inv^{\T}}
\def\Invt{\tilde{\Inv}}

\def\ssize{N}
\def\nsize{{n}}

\def\rhor{\omega}

\def\LT{L}
\def\LGP{\LT_{\GP}}
\def\La{\mathbb{L}}
\def\Lab{\La_{\rdb}}
\def\Lam{\La_{\rdm}}

\def\DP{D}
\def\DPc{\DP_{0}}
\def\DPb{\DP_{\rdb}}
\def\DPm{\DP_{\rdm}}

\def\LabGP{\La_{\rdb,\GP}}
\def\LamGP{\La_{\rdm,\GP}}

\def\DPbGP{\DP_{\rdb,\GP}}
\def\DPmGP{\DP_{\rdm,\GP}}
\def\riskbGP{\riskt_{\rdb,\GP}}

\def\gmi{\mathtt{b}}
\def\gmiid{\mathtt{g}_{1}}
\def\kullbi{\Bbbk}
\def\Thetasi{\Theta_{\loc}}
\def\rri{\mathtt{u}}
\def\rris{\rri_{0}}

\def\Ipc{\bb{\mathrm{f}}}
\def\IF{\Bbb{F}}
\def\IFc{\IF_{0}}
\def\IFb{\IF_{\rdb}}
\def\IFm{\IF_{\rdm}}

\def\DF{\cc{D}}
\def\DFc{\DF_{0}}
\def\DFb{\DF_{\rdb}}
\def\DFm{\breve{\DF}_{\rd}}
\def\DFm{\DF_{\rdm}}

\def\DPr{\breve{\DP}}
\def\VF{\cc{V}}
\def\VFc{\VF_{0}}

\def\HHc{\HH_{0}}
\def\HHb{\HH_{\rd}}
\def\HHm{\HH_{\rdm}}

\def\xib{\xi^{*}}
\def\xivb{\xiv_{\rdb}}
\def\xivm{\xiv_{\rdm}}
\def\CAm{\underline{\CA}}
\def\CAb{\CA}

\def\penr{\operatorname{pen}}
\def\pen{\mathfrak{t}}
\def\PEN{\operatorname{PEN}}
\def\RSS{\operatorname{RSS}}
\def\med{\operatorname{med}}

\def\ex{\mathrm{e}}
\def\entrl{\mathbb{Q}}
\def\entrlb{\entrl}
\def\entr{\entrl}

\def\kullb{\cc{K}} %{\wp}
\def\kullbc{\kullb^{c}}

\def\gm{\mathtt{g}}
\def\gmc{\gm_{c}}
\def\gmb{\gm}
\def\gmbm{\gmb_{1}}

\def\yy{\mathtt{y}}
\def\yyc{\yy_{c}}
\def\xx{\mathtt{x}}
\def\xxc{\xx_{c}}
\def\tc{t_{c}}

\def\alp{\alpha}
\def\alpn{\rho}
\def\gmu{\mathfrak{a}}

\def\losst{\varrho}
\def\loss{\wp}
\def\lossp{u}
\def\closs{g}

\def\riskt{\cc{R}}
\def\emprisk{\ell}
\def\bias{b}
\def\bern{q}

\def\TT{\nsize}

\def\Pone{P}
\def\Pf{\P_{f(\cdot)}}
\def\Ef{\E_{f(\cdot)}}
\def\Ps{\P_{\thetas}}
\def\Es{\E_{\thetas}}
\def\Pu{\P_{\upsilons}}
\def\Eu{\E_{\upsilons}}

\def\Pvs{\P_{\thetavs}}
\def\Evs{\E_{\thetavs}}

\def\UPd{w}
\def\nunup{\nu_{1}}
\def\rru{\rr_{1}}
\def\rups{\rr_{0}}
\def\rupsb{\rups^{*}}
\def\rrf{\rr^{\flat}}
\def\rupd{\rr_{\circ}}

\def\smooths{\mathbb{S}}
\def\smooth{\smooths_{1}}

\def\elli{\bar{\ell}}

\def\K{K}

\def\Psir{\breve{\Psi}}

\def\af{a}
\def\afs{\af^{*}}

\def\kapla{\varkappa}

\newcommand{\mlew}[1]{\tilde{\thetav}_{#1}}
\newcommand{\mlea}[1]{\hat{\thetav}_{#1}}
\newcommand{\mluw}[1]{\tilde{\theta}_{#1}}
\newcommand{\mlua}[1]{\hat{\theta}_{#1}}
\newcommand{\penm}[1]{\boldsymbol{m}_{#1}}

\def\Pdom{\mu_{0}}
\def\PDOM{\bb{\mu}_{0}}
\def\EDOM{\E_{0}}

\def\mk{m}
\def\Mk{\cc{M}}
\def\SV{\cc{S}}

\def\Cs{E}
\def\Csd{\Cs^{\circ}}
\def\Ca{A}
\def\CS{\cc{E}}
\def\CA{\cc{A}}
\def\CAb{\CA_{\rd}}
\def\CAC{\CA_{\CoFu}}

\def\Ccb{m_{\rdb}}
\def\Ccm{m_{\rdm}}
\def\CcbGP{m_{\rdb,\GP}}
\def\CcmGP{m_{\rdm,\GP}}

\def\etas{\eta^{*}}

\def\omegav{\bb{\phi}}
\def\omegavs{\omegav^{*}}
\def\omegavc{\omegav'}

\def\nuvs{\nuv^{*}}
\def\nuvc{\nuv'}

\def\nunu{\nu_{0}}
\def\numu{\nu_{1}}
\def\nupi{\nu^{+}}
\def\nubu{\beta}

\def\nus{\nu}
\def\nusb{\nus}
\def\nusr{\nus^{\bracketing}}
\def\Nusb{\mathbb{N}}
\def\Nusr{\mathbb{N}^{\diamond}}

\def\dist{d}
\def\distd{\mathfrak{a}}

\def\hatk{\kappa}
\def\ko{k^{\circ}}

\def\qqq{\mathfrak{q}}
\def\ppp{{s}}
\def\Cqq{C(\qqq)}
\def\Cqqb{C^{\diamond}(\qqq)}
\def\Crho{C(\mrho)}
\def\Cqqm{\log(4)}
\def\Cqpr{(\qqq \rrp + \dimp / 2)}

\def\Cdima{\mathfrak{e}_{0}}
\def\Cdimb{\mathfrak{e}_{1}}
\def\cdima{\mathfrak{c}_{0}}
\def\cdimb{\mathfrak{c}_{1}}
\def\cdim{\mathfrak{c}}

\def\rdomega{\varrho}
\def\deltaD{\delta}
\def\alphai{\alpha_{1}}
\def\alphaii{\alpha_{2}}
\def\alphaiii{\alpha_{3}}
\def\alphaiv{\alpha_{4}}

\def\err{\diamondsuit}
\def\errbm{\bar{\err}_{\rdomega}}
\def\errm{\err_{\rdm}}
\def\errb{\err_{\rdb}}

\def\errbGP{\err_{\rdomega,\GP}}
\def\errmGP{\err_{\rdm,\GP}}
\def\errbmGP{\bar{\err}_{\rd,\GP}}

\def\errs{\err_{\rdomega}^{*}}
\def\deltas{\alpha}

\def\xivbGP{\xiv_{\rdb,\GP}}
\def\xivmGP{\xiv_{\rdm,\GP}}

\def\SP{S}
\def\GP{G}
\def\GPt{\GP_{0}}
\def\GPn{\GP_{1}}
\def\gp{g}
\def\gs{s}

\def\errbGP{\err_{\rdb,\GP}}
\def\errmGP{\err_{\rdm,\GP}}
\def\errpmGP{\err_{\GP}^{\pm}}

\def\LCS{\cc{C}}

\def\DPGP{\DP_{\GP}}
\def\thetavsGP{\thetavs_{\GP}}

\def\LL{\cc{L}}
\def\LLb{\LL^{*}}
\def\LLh{\cc{L}}

\def\YY{\cc{Y}}
\def\LP{L^{\circ}}

\def\modcnrd{\mathfrak{A}}

\def\pens{\pi}
\def\pnn{\mathfrak{g}}
\def\pnnd{\mathfrak{u}}
\def\pnndGP{\pnnd_{\GP}}

\def\confpr{\mathfrak{c}}
\def\confprb{\confpr^{*}}

\def\pn{\pens^{*}}
\def\penInt{\mathfrak{D}}
\def\penH{\mathbb{H}}
\def\pmu{\mathfrak{u}}
\def\Closs{\cc{R}}

\def\dimp{p}
\def\riskb{\riskt_{\rdb}}
\def\dimpp{\dimp+1}
\def\BB{I\!\!B}
\def\vA{\mathtt{v}}

\def\deficiency{\Delta}
\def\spread{\Delta}
\def\dimtotal{\dimp^{*}}

\def\thetav{\bb{\theta}}
\def\thetavs{\thetav^{*}}
\def\thetavc{\thetav'}
\def\thetavd{\thetav^{\circ}}
\def\thetavdc{\thetav^{\sharp}}
\def\dthetavs{\thetav,\thetavs}

\def\thetas{\theta^{*}}
\def\thetac{\theta'}
\def\thetad{\theta^{\circ}}
\def\thetab{\theta^{\dag}}
\def\thetavb{\thetav^{\dag}}

\def\vtheta{\vartheta}
\def\vthetav{\bb{\vtheta}}
\def\prior{\Pi}

\def\Gam{\Xi}
\def\Gam{\mathcal{S}}
\def\RG{R}
\def\Psu{\Upsilon}
\def\Phim{\breve{\Phi}}

\def\Proj{P}

\def\gammavs{\gammav^{*}}
\def\gammavd{\gammav^{\circ}}
\def\etavs{\etav^{*}}
\def\etavd{\etav^{\circ}}
\def\etavc{\etav'}

\def\taus{\tau_{0}}
\def\taup{\lceil \tau \rceil}

\def\sigmas{{\sigma^{*}}}
\def\Sigmas{\Sigma_{0}}

\def\upsilonc{\upsilon'}
\def\upsilond{\upsilon^{\circ}}
\def\upsilonp{{\upsilon}^{*}}
\def\upsilonm{{\upsilon}_{*}}
\def\upsilonvs{\upsilonv^{*}}
\def\upsilons{\upsilon^{*}}
\def\upsilonb{\bar{\upsilon}}
\def\upsilonvd{\upsilonv^{\circ}}

\def\ups{\bb{\upsilon}}
\def\upss{\ups_{0}}
\def\upsc{\ups^{\prime}}
\def\upsd{\ups^{\circ}}
\def\upsdc{\ups^{\sharp}}
\def\upsdu{\ups^{\flat}}

\def\Ups{\varUpsilon}
\def\Upsd{\Ups^{\circ}}
\def\Upss{\Ups_{\circ}}
\def\UpsP{\Ups^{c}}

\def\Thetas{\Theta_{0}}
\def\ThetasGP{\Theta_{0,\GP}}
\def\varthetav{\bb{\vartheta}}

\def\glink{g}

\def\fvs{\fv}
\def\fs{f}
\def\fb{\fv^{\dag}}

\def\uc{\uv'}
\def\ud{\uv^{\circ}}
\def\uvs{\uv^{*}}
\def\us{u^{*}}
\def\vs{v^{*}}

\def\reps{\epsilon}
\def\eps{\epsilon}

\def\repsc{\reps_{0}}
\def\repsb{\reps^{*}}
\def\repsg{g}

\def\lu{\delta}
\def\lub{\bar{\lu}}

\def\Uu{U}
\def\UU{\cc{Y}}
\def\UUM{\cc{M}}
\def\UP{\cc{U}}
\def\up{\mathfrak{u}}

\def\VP{V}
\def\VPc{\VP_{0}}
\def\VPV{\cc{U}}
\def\VPVc{\cc{\VPV}_{0}}
\def\VPGP{\VP_{\GP}}
\def\VPSP{\VP_{\SP}}

\def\VV{H}
\def\GV{\cc{G}}
\def\GVS{S}

\def\VVb{\VV^{*}}
\def\VVc{\VV_{0}}
\def\vv{\bb{h}}
\def\vva{g}
\def\vp{\mathbf{v}}
\def\vpc{\vp_{0}}
\def\VVca{\VV}
\def\Vtt{H}

\def\DG{D}

\def\Vn{V_{0}}
\def\vn{v_{0}}

\def\norm{\mathfrak{c}}
\def\normc{\delta}
\def\norma{c}

\def\egridd{\cc{E}_{\delta}}
\def\penb{\varkappa}

\def\dotzeta{\dot{\zeta}}
\def\mes{\pi}
\def\mesl{\Lambda}
\def\cprr{F}

\def\lambdam{\gm_{1}}
\def\lambdaB{{\lambda}^{*}}
\def\lambdac{{\lambda'}}

\def\cla{{b}}
\def\fis{\mathfrak{a}}
\def\fiss{\fis_{1}}

\def\Vd{{V}}
\def\vd{\bar{v}}

\def\klim{k^{\circ}}
\def\midm{\mid \!}

\def\Ldrift{M}
\def\ldrift{m}
\def\mY{b}
\def\Lvar{D}
\def\lvar{\sigma}

\def\Mubcu{\Upsilon}
\def\Dthetav{\bb{u}}

\def\B{\cc{B}}
\def\BD{\B^{\circ}}
\def\BU{B}
\def\BI{\B^{*}}

\def\mub{\mu^{*}}
\def\mubc{\mu}
\def\mubcb{\mubc^{*}}
\def\Mubc{\mathbb{M}}
\def\Mubcb{\mathrm{M}}

\def\zzc{\zz_{c}}
\def\ww{w}
\def\wwc{\ww_{c}}

\def\norms{\circ} %{\vartriangle}
\def\rs{\rr_{\norms}}
\def\yys{\yy_{\norms}}
\def\xxs{\xx_{\norms}}
\def\zzs{\zz_{\norms}}
\def\uu{\mathtt{u}}
\def\uus{\uu_{\norms}}
\def\mus{\mu_{\norms}}
\def\gms{\gm_{\norms}}
\def\wws{\ww_{\circ}}

\def\srho{s}
\def\mrho{\varrho}

\def\Lmgf{\mathfrak{M}}
\def\Lmgfb{\Lmgf^{*}}

\def\lmgf{\mathfrak{m}}
\def\lmgfb{\lmgf^{*}}

\def\Expzeta{\mathfrak{N}}
\def\expzeta{\mathfrak{s}}

\def\rr{\mathtt{r}}
\def\rrb{\rr^{*}}
\def\rru{\rr_{\circ}}
\def\rrc{\rr'}
\def\rs{r_{*}}

\def\zz{\mathfrak{z}}
\def\zzb{\tilde{\zz}}
\def\tt{\mathfrak{t}}
\def\zb{z_{\rd}}
\def\zzg{\zz_{1}}
\def\zzQ{\zz_{0}}
\def\zzq{\zz}

\def\Cr{\mathfrak{c}}
\def\Crp{\mathfrak{C}}
\def\Crl{\mathfrak{r}}
\def\Crlp{\mathfrak{R}}
\def\Crlq{\cc{T}}
\def\Crlmu{\cc{M}}

%%%%%%%%%%%%   semipar   %%%%%%%%%%%%%%%%%%%%%%%%
\def\zetah{\zeta_{h}}
\def\GG{G}
\def\HH{H}
\def\pG{p}
\def\pH{q}
\def\hh{H^{*}}

\def\mubch{\mubc_{1}}
\def\rhoh{\rho_{1}}
\def\CoFuh{\CoFu_{1}}
\def\dimh{p_{1}}
\def\VPh{\VP_{1}}
\def\VPt{\VP_{0}}

\def\LLh{L_{1}}
\def\pnndh{\pnnd_{1}}

\def\LCS{C}
\def\Ac{A_{0}}
\def\Ab{A_{\rd}}
\def\DPrb{\DPr_{\rdb}}
\def\DPrm{\DPr_{\rdm}}
\def\Cb{\cc{C}_{\rdb}}
\def\Ub{\cc{U}_{\rdb}}
\def\zetavrb{\zetavr_{\rd}}
\def\xivrb{\breve{\xiv}_{\rd}}
\def\VPrb{\breve{\VP}_{\rdb}}
\def\Larb{\breve{\La}_{\rdb}}
\def\Larm{\breve{\La}_{\rdm}}

\def\deltav{\bb{\delta}}

\def\score{\nabla}
\def\scorer{\breve{\nabla}}

\def\LCS{C}
\def\Ac{A_{0}}
\def\Bc{B_{0}}
\def\AF{A}
\def\Ab{A_{\rdb}}
\def\Am{A_{\rdm}}
\def\DPrc{\DPr_{0}}
\def\DPrb{\DPr_{\rdb}}
\def\DPrm{\DPr_{\rdm}}
\def\Cb{\cc{C}_{\rdb}}
\def\Cm{\cc{C}_{\rdm}}
\def\Ub{\cc{U}_{\rdb}}
\def\deltav{\bb{\delta}}
\def\nuv{\bb{\nu}}
\def\xivrb{\breve{\xiv}_{\rd}}
\def\VPrb{\breve{\VP}_{\rdb}}
\def\Larb{\breve{\La}_{\rdb}}
\def\Lar{\breve{\La}}
\def\Larm{\breve{\La}_{\rdm}}
\def\VH{Q}
\def\VHc{\VH_{0}}
\def\zetavrm{\zetavr_{\rdm}}
\def\N{\mathbb{N}}

\def\Span{\operatorname{span}}
\def\Exc{{\square}}
\def\UUs{U_{\circ}}
\def\errbm{\errb^{*}}
\def\corrDF{\nu}
\def\BBr{\breve{\BB}}
\def\taua{\tau}
\def\AssId{\mathcal{I}}
\def\assId{\iota}
\def\AFD{\cc{A}}

\def\BanX{\cc{X}}
\def\basX{\ev}
\def\apprX{\alpha}
\def\fvs{\fv^{*}}
\def\lkh{\ell}
\def\Bc{B_{0}}
\def\dimn{\dimp_{\nsize}}
\def\betan{\beta_{\nsize}}

%%%%%%%%%%%   BvM   %%%%%%%%%%%%%%%%%%%%%%%%%%%%%%

\def\xivGP{\xiv_{\GP}}
\def\dimA{\mathtt{p}}
\def\dimAGP{\dimA}
\def\dime{\dimA_{e}}
\def\dimG{\dimA_{\GP}}
\def\dimS{\dimA_{s}}
\def\nubm{\nu_{\rd}}
\def\uub{u_{\rd}}
\def\uubGP{u_{\rd,\GP}}

\def\priorden{\pi}
\def\xivGP{\xiv_{\GP}}
\def\dimAGP{\dimA}
\def\nubm{\nu_{\rd}}
\def\uub{u_{\rd}}
\def\uubGP{u_{\rd,\GP}}

\def\CR{\mathcal{C}}
\def\CRb{\CR_{\rdb}}
\def\vthetavb{\bar{\vthetav}}
\def\Covpost{\mathfrak{S}}

\def\Db{\DP_{+}}
\def\Dm{\DP_{-}}
\def\uvb{\uv_{+}}
\def\uvm{\uv_{-}}
\def\uud{\omega}
\def\taub{\delta}
\def\Lip{L}
\def\Xb{X_{+}}
\def\Xm{X_{-}}
\def\deltam{\delta_{-}}
\def\betauv{\delta}
\def\betab{\betauv_{1}}
\def\betaf{\betauv_{2}}
\def\upsv{\bb{\varkappa}}
\def\upsvb{\bar{\upsv}}
\def\rhob{\varrho}
\def\alpb{\alp_{1}}
\def\betap{\betauv_{3}}
\def\Ec{\E^{\circ}}
\def\ff{f}
\def\fpos{g}
\def\fneg{h}
\def\alpb{\alp_{+}}
\def\alpm{\alp_{-}}

%%%%%%%%%%%   sms   %%%%%%%%%%%%%%%%%%%%%%%%%%%%%%
\def\kappak{\kappa}
\def\kappas{\kappak^{*}}
\def\Kappak{\cc{K}}
\def\DPk{\DP_{\kappak}}
\def\VPk{\VP_{\kappak}}

%%%%%%%%%%%%   sp   %%%%%%%%%%%%%%%%%%%%%%%%%%%%%
\def\ts{s}
\def\tsv{\bb{\ts}}
\def\mm{\kappa}
\def\mmc{\mm'}
\def\mmd{\mm^{\circ}}
\def\mmo{\mm^{*}}
\def\mmmmo{\mm,\mmo}
\def\mmt{\tilde{\mm}}
\def\mma{\hat{\mm}}
\def\pp{z}

\def\LLL{L_{1}}
\def\LLr{L_{0}}
\def\muL{\mu_{1}}
\def\mur{\mu_{0}}

\def\LmgfL{\Lmgf_{1}}
\def\Lmgfr{\Lmgf_{0}}
\def\Lmgfm{\Lmgf_{1}}

\def\Kappa{\cc{K}}
\def\CoFu{\cc{C}}
\def\CoFuc{\CoFu_{0}}
\def\CoFub{\CoFu^{*}}
\def\CoFuL{\CoFu_{1}}
\def\CoFur{\CoFu_{0}}
\def\CAL{\CA_{1}}
\def\CAr{\CA_{0}}
\def\CAzz{\cc{A}}

\def\pnnL{\pnn_{1}}
\def\pnnr{\pnn_{0}}
\def\ttd{\delta}
\def\alphaL{\alpha_{1}}
\def\alphar{\alpha_{0}}
\def\alpharL{\alpha}
\def\rat{\mathfrak{t}}
\def\mquad{\nquad}
\def\zzL{\zz_{1}}
\def\zzr{\zz_{0}}

\def\mmset{\mathcal{I}}
\def\xex{u}
\def\dcm{q}
\def\dc{g}
\def\dcL{\dc_{1}}
\def\dcr{\dc_{0}}
\def\kk{k}

\def\cpen{\tau}

%=================  density  ==============
\def\dens{f}
\def\jj{j}
\def\JJ{\cc{J}}
\def\Zphi{Z}
\def\Zphiv{\bb{\Zphi}}

%================= LES =====================
\def\nuu{\mathfrak{u}}
\def\nud{\mathfrak{u}_{0}}
\def\nun{c_{\nuu}}
\def\rhork{\kullb}
\def\GH{\mbox{GH}}
\def\HYP{\mbox{HYP}}
\def\NIG{\mbox{NIG}}
\def\IR{{\rm I\!R}}
\def\taggr{b}
\def\penm{\boldsymbol{m}}
\def\Crlp{\cc{R}}

%================== qfu/cmr ====================
\def\Mh{M}
\def\Mht{\Mh^{c}}

\def\Mhh{\Mh^{-}}
\def\Mhc{G}
\def\Lh{L_{1}}
\def\Uh{\cc{U}}
\def\wloc{w}
\def\Bias{B}
\def\bias{b}
\def\ExpzetaU{\Expzeta_{1}}
\def\vpci{\vp_{i,0}}
\def\IFci{\IF_{i,0}}

\def\erqb{\Circle_{\rdb}}
\def\erqm{\Circle_{\rdm}}
\def\errqm{\errm^{*}}
\def\errqb{\errb^{*}}
\def\Nsize{N}
\def\VVD{\VV_{1}}
\def\AA{A}
\def\Wloc{W}

%%%%%%%%%%%   rough   %%%%%%%%%%%%%
\def\tups{\pen_{0}}
\def\rupd{\rr_{\circ}}
\def\VVb{\VVc}
\def\BP{B}
\def\bp{b}

\def\gps{s}
\def\GK{\cc{G}}

\def\zzGP{\zz_{\GP}}

\def\entrlq{\entrl_{1}}
\def\entrlg{\entrl_{2}}
\def\kb{k^{*}}

\def\rderr{\chi}
\def\Excgr{\diamondsuit}
\def\Excgrb{\diamondsuit^{*}}
\def\Thetat{\bar{\Theta}}
\def\biasGP{\bb{\bias}_{\GP}}
\def\QL{W}
\def\QLG{\mathcal{W}}
\def\BPGP{\QLG_{\GP}}
\def\BBGP{\BB_{\GP}}

\def\xxn{\xx_{\nsize}}
\def\fisGP{\mathtt{w}_{\GP}}
\def\risktGP{\riskt_{\GP}}

\def\dimq{q}
\def\nul{\mathrm{o}}
\def\Thetan{\Theta_{\nul}}
\def\thetavn{\thetav_{\nul}}
\def\thetavsn{\thetavs_{\nul}}
\def\tilden#1{\tilde{#1}_{\nul}}
\def\tildeGP#1{\tilde{#1}_{\GP}}
\def\xivn{\xiv_{\nul}}
\def\xivrGP{\xivr_{\GP}}
\def\DPcc{\DP_{\nul}}
\def\DPnGP{\DP_{1,\GP}}
\def\DPnGPr{\breve{\DP}_{1,\GP}}
\def\nablan{\nabla_{\nul}}
\def\scoren{\score_{\nul}}
\def\AnGP{A_{\nul,\GP}}

\def\testst{T}
\def\TGP{\testst_{\GP}}

\def\VPD{\VP_{2}}

\def\entrlB{\entrl_{1}}
\def\SB{W}
\def\dimq{q}
\def\QQ{\mathbb{H}}
\def\QQg{\QQ_{2}}
\def\QQq{\QQ_{1}}
\def\FF{F}
\def\LaGP{\La_{\GP}}

\def\zzQ{\zz_{\QQ}}
\def\zzAA{\zz_{\FF}}
\def\zzAAA{\zz_{\FF,\SB}}
\def\cdimc{\cdima}
\def\fisGP{\fis_{\GP}}
\def\rdomegab{\rdomega^{*}}

\def\lambdaGP{\lambda_{\GP}}
\def\wGP{\mathrm{w}_{\GP}}

\def\uudm{\mathtt{w}}
\def\lambdaB{\lambda_{\BB}}

\def\lambdav{\bb{\lambda}}
\def\etavd{\etav_{\circ}}
\def\thetavb{\breve{\thetav}}
\def\vthetavd{\Ec \vthetav}
\def\Covd{S_{\circ}}
\def\Covpostd{\Covpost_{\circ}}
\def\IS{\mathcal{I}}
\def\etas{\eta^{*}}
\def\Po{\operatorname{Po}}
\def\IF{\Bbb{F}}
\def\etavb{\bar{\etav}}
\def\etavd{\etav^{\circ}}
\def\Pc{\P^{\circ}}
\def\xxn{\xx_{\nsize}}
\def\CRd{\CR^{\circ}}

\def\CONST{\mathtt{C} \hspace{0.1em}}

\def\dimB{\mathtt{p}_{\BB}}

\def\nub{\nu}
\def\VPD{\VP_{2}}
\def\SB{W}
\def\dimq{q}
\def\dimqb{\dimq^{*}}
\def\QQ{\mathbb{H}}
\def\QQg{\QQ_{2}}
\def\QQq{\QQ_{1}}
\def\FF{F}

\def\qq{z}
\def\qqBB{\qq_{\BB}}
\def\qqQ{\qq_{\QQ}}
\def\qqAA{\qq_{\FF}}
\def\qqAAA{\qq_{\FF,\SB}}
\def\rderr{\chi}
\def\Excgr{\diamondsuit}

\def\Ccb{m}
\def\Ccm{m}
\def\BB{B}

\def\vthetavd{\vthetav^{\circ}}
\def\Indru{\Ind_{\rups}}

\def\BBh{U}
\def\betav{\bb{\beta}}
\def\DD{U}
\def\hsp{\tau}
\def\fiD{a}

\def\Prior{\Pi}
\def\prior{\pi}

\def\In{\mathcal{I}}
\def\KK{\cc{K}}
\def\qqu{\qq^{*}}
\def\ws{\omega}

\newcommand{\tobedone}[1]{\par\textbf{\color{red}To be done:} {\color{magenta}#1}}

\renewcommand{\(}{$\,}
\renewcommand{\)}{\,$}

\def\nquad{\hspace{-1cm}}
\def\eqdef{\stackrel{\operatorname{def}}{=}}
\def\tod{\stackrel{d}{\longrightarrow}}
\def\tow{\stackrel{w}{\longrightarrow}}
\def\toP{\stackrel{\P}{\longrightarrow}}

\newcommand{\cc}[1]{\mathscr{#1}}
\newcommand{\bb}[1]{\boldsymbol{#1}}

\renewcommand{\bar}[1]{\overline{#1}}
\renewcommand{\hat}[1]{\widehat{#1}}
\renewcommand{\tilde}[1]{\widetilde{#1}}

\renewcommand{\Gamma}{\varGamma}
\renewcommand{\Pi}{\varPi}
\renewcommand{\Sigma}{\varSigma}
\renewcommand{\Delta}{\varDelta}
\renewcommand{\Lambda}{\varLambda}
\renewcommand{\Psi}{\varPsi}
\renewcommand{\Phi}{\varPhi}
\renewcommand{\Theta}{\varTheta}
\renewcommand{\Omega}{\varOmega}
\renewcommand{\Xi}{\varXi}
\renewcommand{\Upsilon}{\varUpsilon}
\def\nn{\nonumber \\}

\def\suml{\sum\limits}
\def\supl{\sup\limits}
\def\maxl{\max\limits}
\def\infl{\inf\limits}
\def\intl{\int\limits}
\def\liml{\lim\limits}
\def\Cov{\operatorname{Cov}}
\def\Var{\operatorname{Var}}
\def\arginf{\operatornamewithlimits{arginf}}
\def\argsup{\operatornamewithlimits{argsup}}
\def\argmax{\operatornamewithlimits{argmax}}
\def\argmin{\operatornamewithlimits{argmin}}
\def\val{\operatorname{val}}

\def\D{\boldsymbol{D}}
\def\dd{\operatorname{d}}
\def\tr{\operatorname{tr}}
\def\I{I\!\!I}
\def\R{I\!\!R}
\def\E{I\!\!E}
\def\P{I\!\!P}
\def\X{\mathfrak{X}}
\def\kappa{\varkappa}
\def\Const{\mathrm{Const.} \,}
\def\cdt{\boldsymbol{\cdot}}
\def\tm{\!\times\!}
\def\T{\top}
\def\diag{\operatorname{diag}}
\def\diam{\operatorname{diam}}
\def\rank{\operatorname{rank}}
\def\loc{\operatorname{loc}}

\def\av{\bb{a}}
\def\bv{\bb{b}}
\def\cv{\bb{c}}
\def\dv{\bb{d}}
\def\ev{\bb{e}}
\def\fv{\bb{f}}
\def\gv{\bb{g}}
\def\hv{\bb{h}}
\def\iv{\bb{i}}
\def\jv{\bb{j}}
\def\kv{\bb{k}}
\def\lv{\bb{l}}
\def\mv{\bb{m}}
\def\nv{\bb{n}}
\def\ov{\bb{o}}
\def\pv{\bb{p}}
\def\qv{\bb{q}}
\def\rv{\bb{r}}
\def\sv{\bb{s}}
\def\tv{\bb{t}}
\def\uv{\bb{u}}
\def\vv{\bb{v}}
\def\wv{\bb{w}}
\def\xv{\bb{x}}
\def\yv{\bb{y}}
\def\zv{\bb{z}}

\def\Cv{\bb{C}}
\def\Gv{\bb{G}}
\def\Mv{\bb{M}}
\def\Sv{\bb{S}}
\def\Uv{\bb{U}}
\def\Xv{\bb{X}}
\def\Yv{\bb{Y}}
\def\Zv{\bb{Z}}

\def\alphav{\bb{\alpha}}
\def\epsv{\bb{\varepsilon}}
\def\etav{\bb{\eta}}
\def\gammav{\bb{\gamma}}
\def\varepsilonv{\bb{\varepsilon}}
\def\phiv{\bb{\phi}}
\def\psiv{\bb{\psi}}
\def\tauv{\bb{\tau}}
\def\upsilonv{\bb{\upsilon}}
\def\xiv{\bb{\xi}}
\def\zetav{\bb{\zeta}}

\def\Psiv{\bb{\Psi}}
\def\CONST{\mathtt{C}}

\def\itemv{\vfill\item}
\newenvironment{myslide}[1]
    {\begin{frame}\frametitle{#1}\vfill}
    {\vfill\end{frame}}

\def\vsp{\vspace{0.05\textheight} \vfill}
\def\summarysign{\resizebox{0.08\textwidth}{0.08\textheight}{\includegraphics{summary}}\,}
\def\nix{}
\def\wpu{$\bullet$}

\def\btri{\vfill{\( \blacktriangleright \) }}
\def\btrir{\vfill{\( \blacktriangleright \) }}

\newcommand{\mygraphics}[3]{\begin{center}
    \resizebox{#1\textwidth}{#2\textheight}{\includegraphics{#3}}
    \end{center}
}

\newcommand{\mybox}[3]{\begin{center}
    \resizebox{#1\textwidth}{#2\textheight}{#3}
    \end{center}
}

%\definecolor{myhcolor}{rgb}{0.2,0,0.8}
%\definecolor{myhcolor}{named}{red}
\newenvironment{eqnh}
{
    %\color{myhcolor}} {}
    \setbeamercolor{postit}{fg=black,bg=hellgelb} %{fg=myhcolor,bg=white}
    \begin{beamercolorbox}[center,wd=\textwidth]{postit} %rounded=true,shadow=true,
    \begin{eqnarray*}}
    {\end{eqnarray*}\end{beamercolorbox}
}

%!TEX root = semi_bvm_full.tex

\numberwithin{equation}{section}
\numberwithin{figure}{section}

\renewcommand{\textfraction}{0.00}
\renewcommand{\topfraction}{1}
\renewcommand{\bottomfraction}{1}

\def\L{\mathbb{L}}

%bvm definitions
\def\RR{\mathbb{R}}
\def\dimtotal{p}
\def\dimp{q}
\def\dimn{\dimtotal_{n}}
\def\Proj{\Pi_{0}}
\def\Cc{m}
\def\Upsilons{\Ups_{0}}
\def\DPcb{B}
\def\DPrd{\DP_{1}}

%critical example
\def\Mn{M_{\nsize}}
\def\bA{\breve{A}}
\def\cA{\bA_{\dimh}}
\def\Ij{\mathcal{I}}
\def\NU{\mathbb{H}}
\def\epv{\bb{e}}
\def\Edn{{E}}

%semi
\def\msize{m}
\def\scorem{\score_{\msize}}
\def\scorerm{\scorer_{\msize}}
\def\HHcm{\HH_{\msize}}
\def\Acm{A_{\msize}}
\def\DPcm{\DP_{\msize}}
\def\DPrcm{\DPr_{\msize}}
\def\DFcm{\DF_{\msize}}
\def\LLm{\LL_{\msize}}
\def\etavm{\etav_{\msize}}
\def\etavsm{\etavs_{\msize}}
\def\xivrm{\xivr_{\msize}}
\def\xivm{\xiv_{\msize}}
\def\upsilonvm{\upsilonv_{\msize}}
\def\upsilonvsm{\upsilonv_{\msize}^{*}}
\def\Upsm{\Ups_{\msize}}
\def\thetavdm{\thetavd_{\msize}}
\def\thetavsm{\thetav_{\msize}^{*}}
\def\vthetavbm{\vthetavb_{\msize}}
\def\Covpostm{\Covpost_{\msize}}
\def\errSieveParam{\alpha_{\msize}}
\def\errSieveFisher{\beta_\msize}

\def\eps{\varepsilon}
\def\Fs{\cc{F}}
\def\fpp{\mathrm{h}}%\fpp{\mathbb{F}}

\section{Introduction}
\label{seq: introduction}
  The prominent Bernstein -- von Mises (BvM) theorem claims that the posterior measure is asymptotically normal with the mean close to the maximum likelihood estimator (MLE) and the posterior variance is nearly the inverse of the total Fisher information matrix.
  The BvM result provides a theoretical background for Bayesian computations of the MLE and its variance.
  Also it justifies usage of elliptic credible sets based on the first two moments of the posterior.
  The classical version of the BvM Theorem is stated for the standard parametric setup with a fixed parametric model and large samples; see \cite{LeCam1990, VanDerVaart2000} for a detailed overview.
  However, in modern statistic applications one often faces very complicated models involving a lot of parameters and with a limited sample size.
  This requires an extension of the classical results to such non-classical situation.
  We mention \cite{Cox1993, Fr1999, Gh1999, Jo2012} and references therein for some special phenomena arising in the Bayesian analysis when the parameter dimension increases.
  Already consistency of the posterior distribution in nonparametric and semiparametric models is a nontrivial problem; cf. \cite{Schwartz1965} and \cite{Barron1996}.
  Asymptotic normality of the posterior measure for these classes of models is even more challenging; see e.g. \cite{Shen2002}.
  Some results for particular semi and nonparametric problems are available from \cite{Kim2004, Kim2006, Le2011, CaNi2013}.
  \cite{ChKo2008} obtained a version of the BvM statement based on a high order expansion of the profile sampler.
  The recent paper \cite{BiKl2012} extends the BvM statement from the classical parametric case to a rather general i.i.d. framework.
  \cite{Ca2012} studies the semiparametric BvM result for Gaussian process functional priors. In \cite{RiRo2012} semiparametric BvM theorem is derived for linear functionals of density and in forthcoming work \cite{CaRo2013} the result is generalized to a broad class of models and functionals. 
  However, all these results are limited to the asymptotic setup and to some special classes of models like i.i.d. or Gaussian.

  In this paper we reconsider the BvM result for the parametric component of a general semiparametric model.
  An important feature of the study is that the sample size is fixed, we proceed with just one sample.
  A finite sample theory is especially challenging because the most of notions, methods and tools in the classical theory are formulated in the asymptotic setup with the growing sample size. Only few finite sample general results are available; see e.g. the recent paper \cite{BoMa2011}.
  This paper focuses on the semiparametric problem when the full parameter is large or infinite dimensional but the target is low dimensional.
  In the Bayesian framework, the aim is the marginal of the posterior corresponding to the target parameter; cf. \cite{Ca2012}.
  Typical examples are provided by functional estimation, estimation of a function at a point, or simply by estimating a given subvector of the parameter vector.
  An interesting feature of the semiparametric BvM result is that the nuisance parameter appears only via the effective score and the related efficient Fisher information; cf. \cite{BiKl2012}.
  The methods of study heavily rely on the notion of the hardest parametric submodel.
  In addition, one assumes that an estimate of the nuisance parameter is available which ensures a certain accuracy of estimation; see \cite{ChKo2008} or \cite{BiKl2012}.
  This essentially simplifies the study but does not allow to derive a qualitative relation between the full dimension of the parameter space and the total available information in the data.

  Some recent results study the impact of a growing parameter dimension \( \dimn \) on the quality of Gaussian approximation of the posterior.
  We mention \cite{Gh1999,Gh2000}, \cite{BoGa2009}, \cite{Jo2012} and \cite{Bo2011} for specific examples.
  See the discussion after Theorem~\ref{theorem: bvmIid} below for more details.

  In this paper we show that the \emph{bracketing} approach of \cite{Sp2011} can be used for obtaining a finite sample semiparametric version of Bernstein -- von Mises theorem even if the full parameter dimension grows with the sample size.
  The ultimate goal of this paper is to quantify the so called critical parameter dimension for which the BvM result can be applied.
  Our approach neither relies on a pilot estimate of the nuisance and target parameter nor it involves the notion of the hardest parametric submodel.
  In the case of finite dimensional nuisance the obtained results only require some smoothness of the log-likelihood function, its finite exponential moments, and some identifiability conditions.
  Further we specify this result to the i.i.d. setup and show that the imposed conditions are satisfied if \( \dimn^{3}/\nsize \) is small.
  We present an example showing that the dimension \( \dimn = O(\nsize^{1/3}) \) is indeed critical and the BvM result starts to fail if \( \dimn \) grows over \( \nsize^{1/3} \). 
  If the nuisance is infinite dimensional then additionally some smoothness of nonparametric part is required. 
  We state the general BvM results and show that the smoothness \(s > 3/2\) is sufficient for the validity of the BvM result for linear and generalized linear models with the nuisance parameter 
  from Sobolev class.

  Now we describe our setup.
  Let \( \Yv \) denote the observed random data, and \(\P\) denote the data distribution. The parametric statistical model assumes that the unknown data distribution \(\P\) belongs to a given parametric family \( (\P_{\upsilonv}) \):
  \begin{EQA}[c]
    \Yv \sim \P = \P_{\upsilonvs} \in (\P_{\upsilonv}, \, \upsilonv \in \Ups),
  \end{EQA}
  where \(\Ups\) is some parameter space and \(\upsilonvs \in \Ups\) is the true value of parameter.
  In the semiparametric framework, one attempts to recover only a low dimensional component \(\thetav\) of the whole parameter \(\upsilonv\).
  This means that the target of estimation is
  \begin{EQA}[c]
    \thetavs \eqdef \Proj \upsilonvs,
  \end{EQA}
  for some mapping \(\Proj : \Ups \to \R^{\dimp}\), and \(\dimp \in \N\) stands for the dimension of the target.
  Usually in the classical semiparametric setup, the vector \(\upsilonv\) is represented as \(\upsilonv = (\thetav, \etav)\), where \(\thetav\) is the target of analysis while \(\etav\) is the \emph{nuisance parameter}. We refer to this situation as \((\thetav, \etav)\)-setup and our presentation follows this setting. An extension to the \(\upsilonv\)-setup with \(\thetav = \Proj \upsilonv\) is straightforward. Also for simplicity we first develop our results for the case when the total parameter space \(\Ups\) is a subset of the Euclidean space of dimensionality \(\dimtotal\).

  Another issue addressed in this paper is the model misspecification. In the most of practical problems, it is unrealistic to expect that the model assumptions are exactly fulfilled, even if some rich nonparametric models are used. This means that the true data distribution \(\P\) does not belong to the considered family \((\P_{\upsilonv}\, , \upsilonv \in \Ups)\). The ``true'' value \(\upsilonvs\) of the parameter \(\upsilonv\) can defined by
  \begin{EQA}[c]
    \upsilonvs = \argmax_{\upsilonv \in \Ups} \E \LL(\upsilonv),
  \label{upssdefBs}
  \end{EQA}
  where
  \(\LL(\upsilonv) = \log \frac{d\P_{\upsilonv}}{d\PDOM}(\Yv)\) is the log-likelihood
  function of the family \( (\P_{\upsilonv}) \)
  for some dominating measure \( \PDOM \).  
  Under model misspecification, \(\upsilonvs\) defines the best parametric fit to \(\P\) by the considered family; cf. \cite{Chernozhukov2003}, \cite{KlVa2006, KlVa2012} and references therein.
  The target \(\thetavs\) is defined by the mapping \(\Proj\):
  \begin{EQA}[c]
    \thetavs \eqdef \Proj \upsilonvs.
  \label{projSemiTrue}
  \end{EQA}

  Now we switch to the Baeysian set-up.
  Let \(\prior\) be a prior measure on the parameter set \(\Upsilon\).
  Below we study the properties of the posterior measure which is the random measure on
  \(\Upsilon\) describing the conditional distribution of \(\upsilonv\) given \(\Yv\)
  and obtained by normalization of the product
  \(\exp \bigl\{\LL(\upsilonv) \bigr\} \prior(d\upsilonv)\).
  This relation is usually written as
  \begin{EQA}
    \upsilonv \mid \Yv
    & \propto &
    \exp \bigl\{ \LL(\upsilonv) \bigr\} \, \prior(d\upsilonv) .
  \label{bayesForm}
  \end{EQA}
  An important feature of our analysis is that \( \LL(\upsilonv)\) is not assumed to be the true log-likelihood. This means that a model misspecification is possible and the underlying data distribution can be beyond the considered parametric family. In this sense, the Bayes formula \eqref{bayesForm} describes a \emph{quasi posterior}; \cite{Chernozhukov2003}.
  Below we show that smoothness of the log-likelihood function \( \LL(\upsilonv)\) ensures a kind of a Gaussian approximation of the posterior measure.
  Our focus is to describe the accuracy of such approximation as a function of the parameter dimension \( \dimtotal \) and the other important characteristics of the model.

  We suppose that the prior measure \(\prior\) has a positive density
  \(\priorden(\upsilonv)\) w.r.t. to the Lebesgue measure on
  \(\Upsilon\): \(\prior(d\upsilonv) = \priorden(\upsilonv) d \upsilonv\). Then
  \eqref{bayesForm} can be written as
  \begin{EQA}
    \upsilonv \mid \Yv
    & \propto &
    \exp \bigl\{ \LL(\upsilonv) \bigr\} \, \priorden(\upsilonv) .
  \label{bayesFormDen}
  \end{EQA}
  The famous Bernstein -- von Mises (BvM) theorem claims that the posterior centered by any efficient estimator \( \tilde{\upsilonv} \) of the parameter \( \upsilonvs \) (for example MLE) and scaled by the total Fisher information matrix is nearly standard normal:
  \begin{EQA}[c]
  \label{bvmFullAsympt}
    \DFc (\upsilonv - \tilde{\upsilonv}) \mid \Yv
    \tow
    \ND(0,\Id_{\dimtotal}) \, ,
  \end{EQA}
  where \(\Id_{\dimtotal}\) is an identity matrix of dimension \(\dimtotal\).

  An important feature of the posterior distribution is that it is entirely known and can be numerically assessed.
  If we know in addition that the posterior is nearly normal, it suffices to compute its mean and variance for building the concentration and credible sets. 
  The BvM result does not require prior distribution to be proper and the phenomenon can be observed in the case of improper priors as well (for examples, see \cite{Bochkina2012}).

  In this work we investigate the properties of the posterior distribution for the target parameter 
  \( \vthetav = \Proj \upsilonv \).
  In this case \eqref{bayesFormDen} can be written as
  \begin{EQA}
    \vthetav \mid \Yv
    & \propto &
    \int\exp \bigl\{ \LL(\upsilonv) \bigr\} \, \priorden(\upsilonv) d \etav.
  \label{bayesFormDenPartial}
  \end{EQA}
  The BvM result in this case transforms into
  \begin{EQA}[c]
  \label{BvmSemiAsympt}
    \DPrc (\vthetav - \tilde{\thetav}) \mid \Yv
    \tow
    \ND(0,\Id_{\dimp}) \, ,
  \end{EQA}
  where \(\Id_{\dimp}\) is an identity matrix of dimension \(\dimp\),
  \( \tilde{\thetav} = \Proj \tilde{\upsilonv} \),
  and \( \DPrc^{2} \) is given in \eqref{fisherPartial}.

  We consider two important classes of priors, namely non-informative and flat priors.
  Our goal is to show under mild conditions that the posterior distribution of the target parameter \eqref{bayesFormDenPartial} is close to a prescribed Gaussian law even for finite samples.
  The other important issue is to specify the conditions on the sample size and the dimension of the parameter space for which the BvM result is still applicable.

\section{BvM Theorem with a finite dimensional nuisance}
\label{sec: bvmTargetFinite}
%\subsection{Parametric estimation: main definitions}
\label{mainDefinitions}

This section presents our main results for the case of a finite dimensional parameter \( \upsilonv \),
%We start with finite dimensional case, 
  i.e. \(\dim(\Upsilon) = \dimtotal < \infty\). One of the main elements of our construction is a \(\dimtotal \times \dimtotal \) matrix \( \DFc^{2} \) which is defined similarly to the Fisher information matrix:
  \begin{EQA}
  \label{fisherFull}
    \DFc^{2}
    & \eqdef &
    - \nabla^{2} \E \LL(\upsilonvs) .
  \end{EQA}
  Here and in what follows we implicitly assume that the log-likelihood function
  \( \LL(\upsilonv) \) is sufficiently smooth in \( \upsilonv \),
  \( \nabla \LL(\upsilonv) \) stands for its gradient and
  \( \nabla^{2} \E \LL(\upsilonv) \) for the Hessian of the expectation
  \( \E \LL(\upsilonv) \), and the true value \( \upsilonvs \) is due to \eqref{upssdefBs}.
  Also define the score 
  \begin{EQA}[c]
    \xiv
    \eqdef
    \DFc^{-1} \nabla \LL(\upsilonvs).
  \label{effScore}
  \end{EQA}
  The definition of \( \upsilonvs \) implies \( \nabla \E \LL(\upsilonvs) = 0 \) and hence,
  \( \E \xiv = 0 \).

  For the \( (\thetav,\etav) \)-setup, we
  consider the block representation of the vector \( \nabla \LL(\upsilonvs) \) and of the matrix and \(
  \DFc^{2} \) from \eqref{fisherFull}:
  \begin{EQA}[c]
    \nabla \LL(\upsilonvs)
    =
    \left(
      \begin{array}{c}
        \score_{\thetav} \\
        \score_{\etav}
      \end{array}
    \right),
    \quad
    \DFc^{2}
    =
    \left(
      \begin{array}{cc}
        \DPc^{2} & \Ac \\
        \Ac^{\T} & \HHc^{2}
      \end{array}
    \right).
  \label{blockGradFischer}
  \end{EQA}
  Define also the \( \dimp \times \dimp \) matrix \( \DPrc^{2} \) and
  random vectors \( \scorer_{\thetav} , \xivr \in \R^{\dimp} \) as
  \begin{EQA}
  \label{fisherPartial}
    \DPrc^{2}
    & \eqdef &
    \DPc^{2} - \Ac \HHc^{-2} \Ac^{\T} ,
    \\
    \scorer_{\thetav}
    & \eqdef &
    \score_{\thetav} - \Ac \HHc^{-2} \score_{\etav} ,
    \\
    \xivr
    & \eqdef &
    \DPrc^{-1} \scorer_{\thetav} .
  \end{EQA}
  The \( \dimp \times \dimp \) matrix \( \DPrc^{2} \) is usually called the efficient
  Fisher information matrix, while the random vector \( \xivr \in \R^{\dimp} \) is the efficient score.
  Everywhere in the text for a vector \(\av\) we denote by \(\|\av\|\) its Euclidean norm and for a matrix \(A\) we denote by \(\|A\|\) its operator norm.

\subsection{Conditions}
  \label{sec: conditions}
  Our results assume a number of conditions to be satisfied.
  The list is essentially as in \cite{Sp2011}, one can find there some discussion and
  examples showing that the conditions are not restrictive and are fulfilled in most of
  classical models used in statistical studies like i.i.d., regression or Generalized Linear
  models. 
  The conditions are split into local and global.
  The local conditions only describe the properties of the process \(\LL(\upsilonv)\)
  for \(\upsilonv \in \Upsilons(\rups) \) with some fixed value \(\rups\):
  \begin{EQA}[c]
    \Upsilons(\rups)
    \eqdef
    \bigl\{\upsilonv \in \Ups \colon \|\DFc (\upsilonv - \upsilonvs)\| \le \rups\bigr\}.
  \end{EQA}
  The global conditions have to be fulfilled on the whole \(\Ups\).
  Define the stochastic component \(\zeta(\upsilonv)\) of \(\LL(\upsilonv)\):
  \begin{EQA}[c]
    \zeta(\upsilonv) \eqdef \LL(\upsilonv) - \E \LL(\upsilonv).
  \end{EQA}
  We start with some exponential moments conditions.

  \begin{description}
    \item[\( \bb{(E\!D_{0})} \)]
      There exists a constant \(\nunu>0\), a positive symmetric
      \(\dimtotal\times\dimtotal\) matrix \(\VFc^{2}\)
      satisfying
      \( \Var\{\nabla\zeta(\upsilonvs)\} \le \VFc^{2} \),
      and a constant \( \gm > 0 \) such that
      \begin{EQA}[c]
        \sup_{\gammav \in \R^{\dimtotal}} \log\E \exp\left\{
              \mubc \frac{\langle \nabla \zeta(\upsilonvs),\gammav \rangle}
              {\| \VFc \gammav \|}
              \right\}
        \le
        \frac{\nunu^{2} \mubc^{2}}{2}, ~ |\mubc| \le \gm.
      \end{EQA}

    \item[\( \bb{(E\!D_{2})} \)]
      There exists a constant \( \rhor > 0 \) and for each \(\rr > 0\) a constant \(\gm(\rr) > 0\) such that for
      all \( \upsilonv \in \Upsilons(\rr) \):
      \begin{EQA}[c]
        \sup_{\gammav_{1}, \gammav_{2} \in \R^{\dimtotal}} \log \E \exp\left\{
        \frac{\mubc}{\rhor} \frac{\gammav_{1}^{\T} \nabla^{2} \zeta(\upsilonv) \gammav_{2}}{\|\DFc \gammav_{1}\| \cdot \|\DFc \gammav_{2}\|}
        \right\}
        \le
        \frac{\nunu^{2} \mubc^{2}}{2}, ~ |\mubc| \le \gm(\rr).
      \end{EQA}
  \end{description}

  The next condition is needed to ensure some smoothness properties of expected log-likelihood \(\E \LL(\upsilonv)\) in the local zone \(\upsilonv \in \Upsilons(\rups)\). Define
  \begin{EQA}[c]
    \DFc^{2}(\upsilonv) 
    \eqdef
    -\nabla^{2} \E \LL(\upsilonv).
  \end{EQA}
  Then \(\DFc^{2} = \DFc^{2}(\upsilonvs)\).

  \begin{description}
    \item[\( \bb{(\cc{L}_0)} \)]
      There exists a constant \(\rddelta(\rr)\) such that it holds on
      the set \(\Upsilons(\rr)\) for all \(\rr \le \rups\)

      \begin{EQA}[c]
        \bigl|\DFc^{-1} \DFc^{2}(\upsilonv)\DFc^{-1} - \Id_{\dimtotal}\bigr|
        \le
        \rddelta(\rr).
      \end{EQA}
  \end{description}

  The global identification condition is:
  \begin{description}
    \item[\( \bb{(\cc{L}{\rr})} \)]
      For any \(\rr\) there exists a value \(\gmi(\rr) > 0\),
      such that \(\rr \gmi(\rr) \to \infty\), \(\rr \to \infty\) and
      \begin{EQA}[c]
        -\E \LL(\upsilonv,\upsilonvs)
        \ge
        \rr^{2} \gmi(\rr) \quad \text{for all \( \upsilonv \) with } 
        \rr = \|\DFc (\upsilonv - \upsilonvs)\|.
      \end{EQA}
  \end{description}

  Finally we specify the identifiability conditions. 
  We begin by representing the information and the covariance matrices in block form:
  \begin{EQA}
    \DFc^{2}
    =
    \left(
      \begin{array}{cc}
        \DPc^{2} & \Ac \\
        \Ac^{\T} & \HHc^{2} \\
      \end{array}
    \right),
    & \quad &
    \VFc^{2}
    =
    \left(
      \begin{array}{cc}
        \VPc^{2} & B_0
        \\
        B_0^{\T}  & \VHc^{2}
        \\
      \end{array}
    \right) .
  \label{blockGradFisherCond}
  \end{EQA}
  The \emph{identifiability conditions} in \cite{Sp2011} ensure that the matrix \(\DFc^{2}\) is positive and satisfied \(\fis^{2} \DFc^{2} \ge \VFc^{2}\) for some \(\fis > 0\). 
  Here we restate these conditions in the special block form which is specific for the \((\thetav, \etav)\)-setup.

  \begin{description}
    \item[\( \bb{(\AssId)} \)]
      There are constants \( \fis > 0 \) and \( \corrDF < 1 \) such that
      \begin{EQA}[c]
        \fis^{2} \DPc^{2}
        \ge
        \VPc^{2},
        \qquad
        \fis^{2} \HHc^{2}
        \ge
        \VHc^{2} ,
        \qquad
        \fis^{2} \DFc^{2}
        \ge
        \VFc^{2} .
      \label{regularityCond}
      \end{EQA}
      and
      \begin{EQA}[c]
        \| \DPc^{-1} \Ac \HHc^{-2} \Ac^{\T} \DPc^{-1} \|
        \le
        \corrDF.
      \label{regularityCorrCond}
      \end{EQA}
    \end{description}
  The quantity \(\corrDF\) bounds the angle between the target and nuisance subspaces in the tangent space. The regularity condition \((\AssId)\) ensures that this angle is not too small and hence, the target and nuisance parameters are identifiable. 
  In particular, the matrix \(\DPrc^{2}\) from \eqref{fisherPartial} is well posed under \((\AssId)\). 
  The bounds in \eqref{regularityCond} are given with the same constant \(\fis\) only for simplifying the notation. One can show that the last bound on \(\DFc^{2}\) follows from the first two and \eqref{regularityCorrCond} with another constant \(\fis'\) depending on \(\fis\) and \(\corrDF\) only.

\subsection{The main results}
  First we state the BvM result about the properties of the \( \vthetav \)-posterior
  given by \eqref{bayesFormDenPartial} in case of uniform prior that is,
  \(\priorden(\upsilonv) \equiv 1\) on \(\Upsilon\).  
  Define
  \begin{EQA}[c]
    \vthetavb
    \eqdef
    \E \bigl( \vthetav \cond \Yv \bigr) ,
    \qquad
    \Covpost^{2}
    \eqdef
    \Cov (\vthetav \cond \Yv)
    \eqdef
    \E \bigl\{ (\vthetav - \vthetavb) (\vthetav - \vthetavb)^{\T} \cond \Yv \bigr\}.
  \label{posteriorMoments}
  \end{EQA}
  Also define
  \begin{EQA}[c]
    \thetavd
    \eqdef
    \thetavs + \DPrc^{-1} \xivr .
  \label{effectiveTargetEstimate}
  \end{EQA}
  This random point can be viewed as first order approximation of the profile MLE \( \tilde{\thetav} \). 
  Below we present a version of the BvM result in the considered nonasymptotic setup
  which claims that \( \vthetavb \) is close to
  \( \thetavd \), \( \Covpost^{2} \) is nearly equal to \( \DPrc^{-2} \),
  and \( \DPrc \bigl( \vthetav - \thetavd \bigr) \) is nearly standard normal conditionally on \( \Yv \).

  We suppose that a large constant \( \xx \) is fixed
  which specifies random events \( \Omega(\xx) \) of \emph{dominating probability}.
  We say that a generic random set \( \Omega(\xx) \) is of dominating probability if
  \begin{EQA}[c]
    \P\bigl( \Omega(\xx) \bigr)
    \ge
    1 - \CONST \ex^{-\xx} .
  \label{setDominProb}
  \end{EQA}
  The notation \( \CONST \) for a generic absolute constant and
  \( \xx \) for a positive value ensuring that \( \ex^{-\xx} \) is negligible.
  By \( \Omega(\xx) \) we denote a random event of dominating probability with
  \( \P\bigl( \Omega(\xx) \bigr) \ge 1 - \CONST \ex^{-\xx} \). 
  The exact values of \(\CONST\) will be specified in each particular case.
  The formulation of the results also involve the radius \( \rups \) and the spread 
  \( \spread(\rups,\xx) \).
  The radius \( \rups \) separates the local zone \( \Upss(\rups) \) which is a vicinity of the central
  point \( \upss \), and its complement \( \Ups \setminus \Upss(\rups) \) for which we establish a 
  large deviation result.
  The \emph{spread} value \( \spread(\rups,\xx) \) measures the quality of local approximation 
  of the log-likelihood \( L(\ups,\upss) \) by a quadratic process \( \La(\ups,\upss) \):
\begin{EQA}
	\spread(\rups, \xx) 
	& \eqdef &
      \bigl\{ \rddelta(\rups) 
      + 6 \nunu \, \qqQ(\xx) \, \rhor \bigr\}\, \rups^{2} .
\label{spreaddef}
\end{EQA}
Here the term \( \rddelta(\rups) \rups^{2} \) measures the error of a quadratic approximation of the expected 
log-likelihood \( L(\upsilonv) \) due to \( (\LL_{0}) \), while the second term 
\( 6 \nunu \, \qqQ(\xx) \, \rhor \, \rups^{2} \) controls the stochastic term and involves the entropy 
of the parameter space which is involved in the definition of \( \qqQ(\xx) \).  
A precise formulation is given in Theorem~\ref{theorem: basicQuadApprox} below.

  \begin{theorem}
  \label{theorem: bvmTarget}
    Suppose the conditions of Section~\ref{sec: conditions}. 
    Let the prior be uniform on \(\Upsilon\). 
    Then there exists a random event \( \Omega(\xx) \) of  probability at least \(1 - 4 \ex^{-\xx}\) such that it holds on \( \Omega(\xx) \)
    \begin{EQA}
    \label{meanPostBvm}
      \| \DPrc (\vthetavb - \thetavd) \|^{2}
      & \le &
      4 \spread(\rups, \xx) + 16 \ex^{-\xx},
      \\
      \bigl\| \Id_{\dimp} - \DPrc \Covpost^{2} \DPrc \bigr\|
      & \le &
      4 \spread(\rups, \xx) + 16 \ex^{-\xx},
    \label{covPostBvm}
    \end{EQA}
    where \(\vthetavb\) and \(\Covpost^{2}\) are from \eqref{posteriorMoments}.
    
    Moreover, on \(\Omega(\xx)\) for any measurable set \( A \subset \R^{\dimp} \)
    \begin{EQA}
      && \nquad
      \exp \bigl( -2\spread(\rups, \xx) - 8\ex^{-\xx} \bigr)
      \P\bigl( \gammav \in A \bigr)
      - \ex^{- \xx}
      \\
      & \le &
      \P\bigl( \DPrc (\vthetav - \thetavb) \in A \cond \Yv \bigr)
      \\
      \qquad
      &&
      \qquad \le
      \exp \bigl( 2\spread(\rups, \xx) + 5\ex^{-\xx} \bigr)
      \P\bigl( \gammav \in A \bigr), 
    \label{bvmResultMain}
    \end{EQA}
    where \(\gammav\) is a standard Gaussian vector  in \(\RR^{\dimp}\).
  \end{theorem}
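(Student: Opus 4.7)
The plan is to reduce the problem to explicit Gaussian calculus via the local quadratic approximation supplied by Theorem~\ref{theorem: basicQuadApprox}. Since $\upsilonvs$ maximizes $\E\LL$, one has $\nabla\E\LL(\upsilonvs)=0$ and $\DFc\xiv=\nabla\LL(\upsilonvs)$, so that theorem delivers, on a random event $\Omega(\xx)$ of probability at least $1-\CONST\,\ex^{-\xx}$, the uniform bracketing
\begin{equation*}
  \sup_{\upsilonv\in\Upsilons(\rups)}\Bigl|\LL(\upsilonv)-\LL(\upsilonvs)-\langle\xiv,\DFc(\upsilonv-\upsilonvs)\rangle+\tfrac{1}{2}\|\DFc(\upsilonv-\upsilonvs)\|^{2}\Bigr|\le\spread(\rups,\xx).
\end{equation*}
Consequently, on $\Upsilons(\rups)$ the integrand $\exp\{\LL(\upsilonv)-\LL(\upsilonvs)\}$ is sandwiched between $\exp(\pm\spread(\rups,\xx))$ times the Gaussian quadratic $\exp\{\langle\xiv,\DFc(\upsilonv-\upsilonvs)\rangle-\tfrac{1}{2}\|\DFc(\upsilonv-\upsilonvs)\|^{2}\}$.

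Next I compute the $\etav$-integral of this Gaussian quadratic explicitly. Completing the square in $\etav$ for fixed $\thetav$ using the block representation \eqref{blockGradFischer} and the Schur complement identity produces exactly the efficient information $\DPrc^{2}=\DPc^{2}-\Ac\HHc^{-2}\Ac^{\T}$ and the efficient score $\scorer_{\thetav}=\score_{\thetav}-\Ac\HHc^{-2}\score_{\etav}$ of \eqref{fisherPartial}; up to a $\thetav$-independent constant the remaining expression in $\thetav$ reads $\langle\xivr,\DPrc(\thetav-\thetavs)\rangle-\tfrac{1}{2}\|\DPrc(\thetav-\thetavs)\|^{2}$, which is the logarithm of the density of $\ND(\thetavd,\DPrc^{-2})$ with $\thetavd$ as in \eqref{effectiveTargetEstimate}.

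To close the argument one must show that the contribution of $\Ups\setminus\Upsilons(\rups)$ to the posterior is negligible on $\Omega(\xx)$. This is standard given the global identifiability condition on $\E\LL$ together with the exponential moment bound $(E\!D_{0})$: following the deviation analysis in \cite{Sp2011}, the tail integral $\int_{\Ups\setminus\Upsilons(\rups)}\exp\{\LL(\upsilonv)-\LL(\upsilonvs)\}\,d\upsilonv$ is dominated on $\Omega(\xx)$ by $\ex^{-\xx}$ times the local Gaussian integral. Combining the bracketing, the exact Gaussian integration, and this tail bound yields the two-sided inequality \eqref{bvmResultMain}, with the additive $\ex^{-\xx}$ terms accounting for the tail remainders.

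Finally, \eqref{meanPostBvm} and \eqref{covPostBvm} follow from \eqref{bvmResultMain} by computing the first two moments of $\DPrc(\vthetav-\thetavd)\mid\Yv$ under the sandwich. Using that $\gammav$ has mean $0$ and identity covariance, and expanding $\exp(\pm 2\spread(\rups,\xx))=1\pm 2\spread(\rups,\xx)+O(\spread^{2})$ for moderate $\spread$, the linear and quadratic moments produce respectively the bounds on $\|\DPrc(\vthetavb-\thetavd)\|^{2}$ and on $\|\Id_{\dimp}-\DPrc\Covpost^{2}\DPrc\|$ with the stated constants $4$ and $16$. The principal obstacle in the whole argument is this quantitative propagation of the multiplicative factors $\exp(\pm 2\spread)$ and of the additive $\ex^{-\xx}$ tail remainders through the moment computations; the conceptual content is otherwise contained entirely in the bracketing of Theorem~\ref{theorem: basicQuadApprox} and the Schur-complement identity.
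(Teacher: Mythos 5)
Your route to the distributional sandwich \eqref{bvmResultMain} is essentially the paper's: the bracketing of Theorem~\ref{theorem: basicQuadApprox}, the observation that \( \L(\upsilonv,\upsilonvs) \) is (up to a constant) a Gaussian log-density whose \( \etav \)-marginalization via the Schur complement produces \( \ND(\thetavd,\DPrc^{-2}) \), and the large-deviation control of \( \Ups\setminus\Upsilons(\rups) \) (Theorems~\ref{theorem: postConcentrFull} and \ref{theorem: postConcentrTarget}). That part of the proposal is sound and matches Theorems~\ref{theorem: gaussExpectUpperBound} and \ref{theorem: gaussExpectLowerBound}.

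The gap is in your last paragraph, where you obtain the moment bounds \eqref{meanPostBvm}--\eqref{covPostBvm} \emph{from} the set-wise sandwich by ``computing the first two moments under the sandwich.'' Two concrete problems. First, the additive remainder \( \ex^{-\xx} \) in \eqref{bvmResultMain} does not propagate to an \( O(\ex^{-\xx}) \) error in second moments: writing \( \Ec \bigl|\lambdav^{\T}\etav\bigr|^{2} = \int_{0}^{\infty} \P\bigl(|\lambdav^{\T}\etav|^{2}>t \cond \Yv\bigr)\,dt \) and inserting the lower bound of the sandwich, the term \( -\ex^{-\xx} \) must be integrated over the whole range of \( t \) for which the bound stays useful, which costs a factor of order \( \xx \) (or \( \rups^{2} \) after truncating by the localization), not the stated constant \( 16 \). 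Second, even granting two-sided control of \( \Ec|\lambdav^{\T}\etav|^{2} \), this quantity equals \( \lambdav^{\T}\Covd^{2}\lambdav + |\lambdav^{\T}\etavb|^{2} \), and you have not explained how to disentangle the mean contribution from the covariance contribution; a naive split gives only \( \|\etavb\|^{2} + \|\Covd^{2}-\Id_{\dimp}\| \le \CONST\,\spread \) jointly, not each separately. The paper avoids both issues by proving the quadratic-functional bounds directly (Corollaries~\ref{corollary: gaussExpectUpperQuadInd} and \ref{corollary: gaussExpectLowerQuadInd} with \( \ff(\uv)=|\lambdav^{\T}\uv|^{2} \), where the tail contribution enters multiplicatively through \( \rho_{\ff} \) rather than as an integrated additive error) and then invoking Lemma~\ref{lemma: postBoundLocalMoments}, whose contradiction argument with the test direction \( \uv = \etavb/\|\etavb\| \) is precisely what extracts \( \|\etavb\|^{2}\le 2\spread^{*} \) and \( \|\Covd^{2}-\Id_{\dimp}\|\le 2\spread^{*} \) separately. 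You would need to supply an argument of this kind; the expansion \( \exp(\pm 2\spread)=1\pm 2\spread+O(\spread^{2}) \) by itself does not do it.
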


  The condition ``\( \spread(\rups, \xx) \) is small'' yields the desirable BvM result, that is, the posterior measure after centering and standardization is close in total variation to the standard normal law. The classical asymptotic results immediately follow for many classical models (see discussion in Section~\ref{sec: bvmIid}). 
  The next corollary extends the previous result by using empirically computable objects.
  \begin{corollary}
    \label{corollary: bvmPost}
    Under the conditions of Theorem~\ref{theorem: bvmTarget} for any measurable set \( A \subset \R^{\dimp} \) a random event \( \Omega(\xx) \) of a dominating probability at least \(1 - 4 \ex^{-\xx}\)
    \begin{EQA}
      && \nquad
      \exp \bigl( -2\spread(\rups, \xx) - 8\ex^{-\xx} \bigr)
      \bigl\{\P\bigl( \gammav \in A \bigr) - \tau\bigr\}
      - \ex^{- \xx}
      \\
      & \le &
      \P\bigl( \Covpost^{-1} (\vthetav - \vthetavb) \in A \cond \Yv \bigr)
      \\
      \qquad
      &&
      \qquad \le
      \exp \bigl( 2\spread(\rups, \xx) + 5\ex^{-\xx} \bigr)
      \bigl\{\P\bigl( \gammav \in A \bigr) + \tau\bigr\}, 
    \label{bvmResultPost}
    \end{EQA}
    where 
    \(\gammav\) is a standard Gaussian vector  in \(\RR^{\dimp}\) and
\begin{EQA}
	\tau 
	& \eqdef & 
	\frac{1}{2} \Bigl(\dimp \spread^{2}(\rups, \xx)  
	+ \bigl\{ 1 + \spread(\rups, \xx) \bigr\}^{2} \spread^{2}(\rups, \xx) \Bigr) .
\label{taudefspruxx}
\end{EQA}
  \end{corollary}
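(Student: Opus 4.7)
The plan is to derive the corollary from Theorem~\ref{theorem: bvmTarget} by an affine change of variables, closing the argument with a Gaussian perturbation estimate. Set $B \eqdef \DPrc\Covpost$ and $d \eqdef \DPrc(\vthetavb - \thetavd)$. A short linear-algebra identity gives
\[
\Covpost^{-1}(\vthetav - \vthetavb) \;=\; B^{-1}\bigl[\DPrc(\vthetav - \thetavd) - d\bigr],
\]
so for every measurable $A \subset \R^{\dimp}$ the event $\{\Covpost^{-1}(\vthetav - \vthetavb) \in A\}$ coincides with $\{\DPrc(\vthetav - \thetavd) \in d + BA\}$. Inserting the set $d + BA$ into the two-sided bound~\eqref{bvmResultMain} of Theorem~\ref{theorem: bvmTarget} already reproduces~\eqref{bvmResultPost} on $\Omega(\xx)$, except that $\P(\gammav \in A)$ is replaced by $\P(\gammav \in d + BA)$ throughout.

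It remains to show that for every measurable $A \subset \R^{\dimp}$,
\[
\bigl|\P(\gammav \in d + BA) - \P(\gammav \in A)\bigr| \;\le\; \tau.
\]
Rewriting $\P(\gammav \in d + BA) = \P\bigl(B^{-1}(\gammav - d) \in A\bigr)$ turns this into a total variation bound between $\gammav \sim \ND(0,\Id_{\dimp})$ and the Gaussian vector $\tilde\gammav \eqdef B^{-1}(\gammav - d) \sim \ND\bigl(-B^{-1}d,\,(B^{\T}B)^{-1}\bigr)$. The two raw inputs for such a bound are already produced by Theorem~\ref{theorem: bvmTarget} on $\Omega(\xx)$: inequality~\eqref{meanPostBvm} controls $\|d\|$, while~\eqref{covPostBvm} controls $\|\Id_{\dimp} - \DPrc\Covpost^{2}\DPrc\|$ and hence, through the similarity of $BB^{\T}$ and $B^{\T}B$, also $\|\Id_{\dimp} - B^{\T}B\|$ and the distance of the covariance of $\tilde\gammav$ from $\Id_{\dimp}$.

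The main technical ingredient is therefore a Gaussian perturbation lemma delivering the precise form of $\tau$ in~\eqref{taudefspruxx} from these operator-norm inputs. I would prove it by decomposing the perturbation into a pure rescaling $\gammav \mapsto B^{-1}\gammav$ followed by a translation by $-B^{-1}d$, and controlling each contribution via the explicit Gaussian density ratio on $\R^{\dimp}$: the rescaling step produces the $\dimp\,\spread^{2}$ piece of $\tau$, while the translation, after passing from $\tilde\gammav$ back to $\gammav$ via a Cauchy--Schwarz step that picks up the $(1+\spread)^{2}$ factor, produces the second piece. The delicate point, and the main obstacle I anticipate, is pairing the dimension factor $\dimp$ with the \emph{squared} covariance deviation rather than its first power: this rules out a naive Pinsker-type reduction from Kullback--Leibler divergence and forces a careful second-order expansion of the Gaussian density ratio with explicit bookkeeping of the cross terms. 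Substituting the resulting perturbation bound into the display of the first paragraph gives~\eqref{bvmResultPost}.
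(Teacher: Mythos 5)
Your reduction is the same one the paper intends: substitute \(\DPrd = \Covpost^{-1}\) and \(\hat{\thetav} = \vthetavb\) into the two-sided bound, pull the affine change of variables through, and reduce everything to a total-variation comparison between \(\gammav \sim \ND(0,\Id_{\dimp})\) and \(\ND\bigl(-B^{-1}d, (B^{\T}B)^{-1}\bigr)\), with \(\|\Id_{\dimp}-B^{\T}B\|\) and \(\|d\|^{2}\) controlled by \eqref{covPostBvm} and \eqref{meanPostBvm}. This is precisely what Corollary~\ref{corollary: changeMatrixUpper} and the matching lower bound \eqref{gaussProbUpperChangeMatrix} in Corollary~\ref{corollary: gaussExpectLowerQuadInd} are set up to do, so the skeleton of your argument is sound and aligned with the paper.

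The problem is the step you leave unproved, and the reason you give for needing a bespoke argument there. You assert that a ``naive Pinsker-type reduction from Kullback--Leibler divergence'' cannot pair the dimension factor \(\dimp\) with the \emph{squared} covariance deviation, and on that basis you replace the standard route by an unproven density-ratio expansion. That premise is false, and it is exactly where the paper closes the argument: Lemma~\ref{lemma: kullbTwoNormals} computes \(2\kullb(\P_{0},\P_{1}) = -\log\det(\DD^{\T}\DD) + \tr(\DD^{\T}\DD - \Id_{\dimp}) + \betav^{\T}\DD^{\T}\DD\betav\), and the linear terms in the covariance perturbation cancel because \(a_{j} - \log(1+a_{j}) \le a_{j}^{2}\) for \(|a_{j}| \le 1/2\), leaving \(2\kullb \le \tr(\DD^{\T}\DD-\Id_{\dimp})^{2} + (1+\rd)\|\betav\|^{2} \le \rd^{2}\dimp + (1+\rd)\|\betav\|^{2}\). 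Pinsker then delivers \(\frac{1}{2}\sqrt{\alpha^{2}\dimp + (1+\alpha)^{2}\beta^{2}}\) directly --- KL between Gaussians is intrinsically second order in the covariance deviation, so no ``careful second-order expansion of the density ratio with explicit bookkeeping of cross terms'' is needed, and your proposed replacement lemma is never actually established. To complete your proof you should simply invoke Lemma~\ref{lemma: kullbTwoNormals} with \(\DD = B\) and \(\betav = -B^{-1}d\) (checking \(\|B^{-1}d\|^{2} \le (1+\alpha)\beta^{2}\) as in the proof of Corollary~\ref{corollary: changeMatrixUpper}); note also that the resulting perturbation term carries a square root, as in \eqref{tausedefs} and \eqref{gaussProbUpperCnahgeMatrix}, so you should track how the exponents in \(\alpha\) and \(\beta\) (which are of order \(\spread^{*}(\rups,\xx)\), not \(\spread(\rups,\xx)\)) feed into the final form of \(\tau\) rather than assuming the display \eqref{taudefspruxx} will emerge from a generic estimate.
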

  This corollary is important as in practical applications we do not know matrix \(\DPrc\) and vector \(\thetavd\), but matrix \(\Covpost^{-1}\) and vector \(\vthetavb\) can be computed by numerical computations. 
  If dimension \(\dimp\) is fixed the result becomes informative under the condition 
  ``\( \spread(\rups, \xx) \) is small''. 
  Moreover, the statement can be extended to situations when the target dimension 
  \( \dimp \) grows but \( \spread(\rups, \xx) \, \dimp^{1/2} \) is still small.

\subsection{Extension of Theorem~\ref{theorem: bvmTarget} to a flat prior}
  \label{sec: regularPrior}
  The results of Theorem~\ref{theorem: bvmTarget} for a non-informative prior can be extended to the case
  of a general prior \( \prior(d\upsilonv) \) with a density \( \priorden(\upsilonv) \)
  which is uniformly continuous and sufficiently flat on the local set \( \Upsilons(\rups)\).
  More precisely, let \( \priorden(\upsilonv) \) satisfy
  \begin{EQA}
    \sup_{\upsilonv \in \Upsilons(\rups)}
        \Bigl| \frac{\priorden(\upsilonv)}{\priorden(\upsilonvs)} - 1 \Bigr|
    & \le &
    \alpha(\rups),
    \qquad
    \sup_{\upsilonv \in \Upsilon} \frac{\priorden(\upsilonv)}{\priorden(\upsilonvs)}
    \le
    \CONST(\rups),
  \label{regularPriorDen}
  \end{EQA}
  where \( \alpha(\rups) \) is a small constant while \( \CONST(\rups) \) is any fixed constant.
  Then the results of Theorem~\ref{theorem: bvmTarget} continue to apply with an obvious correction of the approximation error.
  Indeed, for any local set \( A \subseteq \Upsilons(\rups) \) one can apply the bounds
  \begin{EQA}
    \int_{A} \exp\bigl\{ L(\upsilonv) \bigr\} \prior(\upsilonv) d\upsilonv
    & \leq &
    \ex^{ \alpha(\rups) } \prior(\upsilonvs) 
    \int_{A} \exp\bigl\{ L(\upsilonv) \bigr\} d\upsilonv,
    \\
    \int_{A} \exp\bigl\{ L(\upsilonv) \bigr\} \prior(\upsilonv) d\upsilonv
    & \geq &
    \ex^{ - \alpha(\rups) } \prior(\upsilonvs) 
    \int_{A} \exp\bigl\{ L(\upsilonv) \bigr\} d\upsilonv,
  \end{EQA}
  This particularly implies for each \( A \subset \Upsilons(\rups) \),
  \begin{EQA}
    \P_{\prior}(A \cond \Yv)
    & \leq &
    \exp \bigl\{ 2 \alpha(\rups) \bigr\} \P(A \cond \Yv) .
  \label{postBoundContPrior}
  \end{EQA}
  The tail probability of the complement \( \Upsilons^{c}(\rups) \eqdef \Upsilon \setminus \Upsilons(\rups) \) of \( \Upsilons(\rups) \) can be enlarged by \( \CONST(\rups) \) relative to the uniform prior:
  \begin{EQA}
    \int_{\Upsilons^{c}(\rups)} \exp\bigl\{ L(\upsilonv) \bigr\} \prior(\upsilonv) d\upsilonv
    & \leq &
    \CONST(\rups) \, \prior(\upsilonvs) 
    \int_{\Upsilons^{c}(\rups)} \exp\bigl\{ L(\upsilonv) \bigr\} d\upsilonv,
  \label{postTailIntContPrior}
  \end{EQA}
  hence 
  \begin{EQA}
    \P_{\prior}(\Upsilons^{c}(\rups) \cond \Yv)
    & \leq &
    \CONST(\rups) \P(\Upsilons^{c}(\rups) \cond \Yv).
  \label{postTailContPrior}
  \end{EQA}
  In particular, if the tail of the non-informative posterior satisfies 
  \( \P(\Upsilons^{c}(\rups) \cond \Yv) \leq \ex^{-\xx} \), then 
  \( \P_{\prior}(\Upsilons^{c}(\rups) \cond \Yv) \leq \CONST(\rups) \ex^{-\xx} \).
  \begin{theorem}
  \label{theorem: bvmGaussPrior}
    Suppose the conditions of Theorem~\ref{theorem: bvmTarget}.
    Let also \( \Prior = \ND(0,\GP^{-2}) \) be a Gaussian prior measure on
    \( \R^{\dimp} \) such that
    \begin{EQA}[c]
        \| \DFc^{-1} \GP^{2} \DFc^{-1} \| \, \leq \eps^{2} ,
    \label{gaussPriorFlatCond}
    \end{EQA}
    where \( \eps \) is a given constant.
    Then \eqref{postBoundContPrior} and \eqref{postTailContPrior} hold with 
    \( \CONST(\rups) \leq \exp(\| \GP \upsilonvs \|^{2}/2) \) and 
    \( \alpha(\rups) = \max\bigl\{ \eps \, \rups \| \GP \upsilonvs \|, \eps^{2} \rups^{2}/2 \bigr\} \). 
  \end{theorem}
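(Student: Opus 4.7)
The plan is to verify directly that the Gaussian prior \(\Prior = \ND(0,\GP^{-2})\) satisfies the two flatness bounds \eqref{regularPriorDen} with the stated \(\CONST(\rups)\) and \(\alpha(\rups)\); once these are in place, \eqref{postBoundContPrior} and \eqref{postTailContPrior} follow by direct substitution into the elementary extension argument already laid out in Section~\ref{sec: regularPrior}, so no additional probabilistic machinery is needed here. Concretely, the proof reduces to two explicit density-ratio estimates for a Gaussian law.

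First I would write the Lebesgue density as \(\priorden(\upsilonv) \propto \exp\bigl(-\tfrac{1}{2}\|\GP \upsilonv\|^{2}\bigr)\) and, setting \(\Delta \eqdef \upsilonv - \upsilonvs\), expand
\begin{EQA}[c]
\log \frac{\priorden(\upsilonv)}{\priorden(\upsilonvs)}
= -\langle \GP \upsilonvs, \GP \Delta\rangle - \tfrac{1}{2}\|\GP \Delta\|^{2}.
\end{EQA}
The global bound is then immediate: bounding the linear term by \(\tfrac{1}{2}\|\GP \upsilonvs\|^{2} + \tfrac{1}{2}\|\GP \Delta\|^{2}\) via \(2ab \le a^{2}+b^{2}\) and cancelling the quadratic piece with the non-positive \(-\tfrac{1}{2}\|\GP \Delta\|^{2}\) yields \(\log(\priorden(\upsilonv)/\priorden(\upsilonvs)) \le \tfrac{1}{2}\|\GP \upsilonvs\|^{2}\) for every \(\upsilonv \in \Upsilon\). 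Exponentiating gives exactly \(\CONST(\rups) \le \exp(\|\GP \upsilonvs\|^{2}/2)\).

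For the local bound on \(\Upsilons(\rups)\) the idea is to express both terms through the rescaled displacement \(\DFc \Delta\), for which the definition of \(\Upsilons(\rups)\) gives \(\|\DFc \Delta\| \le \rups\). The hypothesis \eqref{gaussPriorFlatCond} rewrites as \(\|\GP \DFc^{-1}\| \le \eps\), so Cauchy--Schwarz bounds the linear term by \(\|\GP \upsilonvs\| \cdot \|\GP \DFc^{-1}\| \cdot \|\DFc \Delta\| \le \eps \rups \|\GP \upsilonvs\|\), while the quadratic term is controlled directly by \(\|\DFc^{-1} \GP^{2} \DFc^{-1}\| \cdot \|\DFc \Delta\|^{2} \le \eps^{2} \rups^{2}\). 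Feeding the resulting log-ratio bound into an elementary \(|\ex^{y}-1| \le 2|y|\) estimate produces the claimed \(\alpha(\rups) = \max\bigl\{\eps \rups \|\GP \upsilonvs\|, \eps^{2} \rups^{2}/2\bigr\}\), with the \(\max\) formulation absorbing the harmless constant factors and the interplay between the two regimes.

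The argument contains no real obstacle; it is a routine computation entirely within Gaussian calculus. The only step that deserves a moment of care is the passage from the log-ratio bound to the ratio bound in the local estimate — one must check that the two contributions \(\eps \rups \|\GP \upsilonvs\|\) and \(\tfrac{1}{2}\eps^{2} \rups^{2}\) are correctly reflected in the \(\max\) form of \(\alpha(\rups)\) and that the linearization factor is absorbed by the constants used downstream in the extension argument of Section~\ref{sec: regularPrior}.
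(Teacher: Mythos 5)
Your proposal is correct and follows essentially the same route as the paper: expand the Gaussian log-density ratio into a linear and a quadratic term in \( \upsilonv - \upsilonvs \), bound the quadratic term by \( \eps^{2}\rups^{2} \) via \eqref{gaussPriorFlatCond} and the linear term by \( \eps\,\rups\|\GP\upsilonvs\| \) via Cauchy--Schwarz, and feed the resulting two-sided log-ratio bound into the extension argument of Section~\ref{sec: regularPrior}. The only difference is cosmetic: the paper states only the local estimate explicitly (declaring it suffices to check \eqref{postBoundContPrior}), whereas you also spell out the global bound giving \( \CONST(\rups) \le \exp(\|\GP\upsilonvs\|^{2}/2) \), which is a harmless and correct addition.
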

  The result of Theorem~\ref{theorem: bvmGaussPrior} tells us that the BvM result holds 
  if the prior distribution is sufficiently flat. 
%  The case of a regularizing prior is not considered in this work but can be handled in a similar way. 
%  We are going to consider this case in a forthcoming paper.

\section{Infinite dimensional nuisance}
\label{sec: infDimNuisance}
  This section describes how previous results can be extended to the case where the nuisance is infinite dimensional. 
%  Let the nuisance parameter be infinite dimensional, 
  More specifically, 
  we consider \((\thetav, \fv)\)-setup, where \(\thetav \in \Theta \subseteq \RR^{\dimp}\) and 
  \(\fv \in \mathcal{H}\) for some Hilbert space \(\mathcal{H}\). 
  Suppose that in \(\mathcal{H}\) exists a countable basis \(\ev_{1}, \ev_{2}, \dots\). Then
  \begin{EQA}[c]
    \fv 
    = 
    \fv(\etav) 
    = 
    \sum_{j = 1}^{\infty} \eta_{j} \ev_{j} \in \mathcal{H}, 
  \end{EQA} 
  where a vector \(\etav  = \{\eta_{j}\}_{j = 1}^{\infty} \in \ell_{2}\) and 
  \(\eta_{j} = \langle \fv, \ev_{j} \rangle\).
  
  Let the likelihood for the full model be \(\LL(\thetav, \fv)\). 
  Denote with \(\upsilonv = (\thetav, \etav)\)
  \begin{EQA}[c]
    \LL(\upsilonv) = \LL(\thetav, \etav) = \LL(\thetav, \fv(\etav)). 
  \end{EQA}
  The underlying full and target parameters can be defined by maximizing the expected log-likelihood:
  \begin{EQA}[c]
    \upsilonvs = \argmax_{\upsilonv = (\thetav,\etav)} \E \LL(\upsilonv), 
    \qquad \thetavs = \Proj \upsilonvs.
  \end{EQA}
  Also define the information matrix \( \DFc^{2} \) for the full parameter \( \upsilonv \) 
  and the efficient information matrix \( \DPrc^{2} \) for the target \( \thetav \):
  \begin{EQA}
    \DFc^{2} 
    & \eqdef &
    \nabla^{2} \E[\LL(\upsilonvs)] \in Lin(\ell_{2}, \ell_{2}),
    \\
    \DPrc^{2} 
    & \eqdef &
    \left(\Proj \DFc^{-2} \Proj^\T\right)^{-1} \in \R^{\dimp \times \dimp}, 
  \end{EQA}
  where \(Lin(\ell_{2},\ell_{2})\) is the space of linear operators from \( \ell_{2}\) to \( \ell_{2}\).

  We apply the sieve approach and use an uninformative finite dimensional prior for 
  the parameters \( \thetav \) and \( \etavm \). 
  Consider a finite dimensional approximations  
  \(\etavm = \{\eta_{j}\}_{j = 1}^{m}\) of the full parameter \( \etav \) and the corresponding finite dimensional approximation of the log-likelihood:
  \begin{EQA}
    \LL (\upsilonvm) &=& \LL (\thetav, \etavm),
    \qquad 
    \upsilonvm = (\thetav, \etavm).
  \end{EQA}
Similarly to the finite dimensional case, introduce the quantities   
\begin{EQA}
    \upsilonvsm 
    &=&
    \argmax_{\upsilonvm \in \Upsm} \E \LL(\upsilonvm) ,
    \\
%  \end{EQA}
  %
%  One of the main elements of our construction is the Fisher information matrix, which is also finite dimensional in this case:
%  \begin{EQA}
  \label{fisherFullSemi}
    \DFcm^{2}
    & \eqdef &
    - \nabla^{2} \E \LL(\upsilonvsm) .
  \end{EQA}
  Here \( \nabla \LL(\upsilonvm) \) stands for its gradient and
  \( \nabla^{2} \E \LL(\upsilonvm) \) for the Hessian of the expectation
  \( \E \LL(\upsilonvm) \).
  Also define the score vector
  \begin{EQA}[c]
    \xivm
    \eqdef
    \DFcm^{-1} \nabla \LL(\upsilonvsm).
  \label{effScoreSieve}
  \end{EQA}
  The definition of \( \upsilonvsm \) implies \( \nabla \E \LL(\upsilonvsm) = 0 \) and hence,
  \( \E \xivm = 0 \).

  Again, we consider the block representation of the vector \( \nabla \LL(\upsilonvsm) \) and of the matrix and \(
  \DFcm^{2} \) from \eqref{fisherFullSemi}:
  \begin{EQA}[c]
    \nabla \LL(\upsilonvsm)
    =
    \left(
      \begin{array}{l}
        \score_{\thetav} \\
        \score_{\etavm}
      \end{array}
    \right),
    \quad
    \DFcm^{2}
    =
    \left(
      \begin{array}{cc}
        \DPcm^{2} & \Acm \\
        \Acm^{\T} & \HHcm^{2}
      \end{array}
    \right).
  \label{blockGradFisherSieve}
  \end{EQA}
  Define also the \( \dimp \times \dimp \) matrix \( \DPrcm^{2} \) and
  random vectors \( \scorer_{\thetav, \msize} , \xivrm \in \R^{\dimp} \) as
  \begin{EQA}
  \label{fisherSieve}
    \DPrcm^{2}
    &=&
    \DPcm^{2} - \Acm \HHcm^{-2} \Acm^{\T} ,
    \\
    \scorer_{\thetav, \msize}
    &=&
    \score_{\thetav} - \Acm \HHcm^{-2} \score_{\etavm} ,
    \\
    \xivrm
    &=&
    \DPrcm^{-1} \scorer_{\thetav, \msize}.
  \end{EQA}
  Approximation of the functional parameter \( \upsilonv \) by \( \upsilonv_{\msize} \) leads to
  two sources of bias. The first one is connected with approximation of the target parameter 
  \(\thetavs - \thetavsm\). 
  The second one is due to the difference between the efficient Fisher information 
  \(\DPrc^{2} \in \R^{\dimp \times \dimp}\) and its approximation 
  \(\DPrcm^{2} \in \R^{\dimp \times \dimp}\).
  Both errors of approximation are due to projection of the functional parameter onto the finite 
  dimensional space spanned by the first \( \msize \) basis functions. 
  The bias terms can be bounded under smoothness assumptions on the model and on the functional 
  nuisance parameter \( \fv \) using the standard methods of approximation theory.
  To avoid tedious calculus we simply assume a kind of consistency of the sieve approximation.
  \begin{description}
    \item[\( \bb{(B)} \)]
      For any \(\msize \in \mathbb{N}\) there exist constants \(\errSieveParam\) and \(\errSieveFisher\) such that
      \begin{EQA}
        \| \DPrc (\thetavs - \thetavsm) \|^{2}
        & \le &\
        \errSieveParam,
        \\
        \bigl\| \Id_{\dimp} - \DPrc \DPrcm^{-2} \DPrc \bigr\|
        & \le &\
        \errSieveFisher.
      \end{EQA}
  \end{description}
  For validity of our results we will need that the value of \(\msize\) is fixed in a proper way
  ensuring that the values of   \(\errSieveParam\) and \(\errSieveFisher\) are sufficiently small.
  These values can be upper bounded under usual smoothness conditions on \( \fv \),
  e.g. if \( \fv \) belongs to a Sobolev ball with a certain regularity;
  cf. \cite{Bo2011,BiKl2012,Ca2012}.
  See also an example of computing the quantities \( \errSieveParam \) and \( \errSieveFisher \) 
  in Section~\ref{sec: semiLinRegr} below.

  Consider a non-informative sieve prior on \( (\thetav,\etavm) \) and define
  \begin{EQA}
    \vthetavbm
    & \eqdef &
    \E \bigl( \vthetav \cond \Yv \bigr) ,
    \\
    \Covpostm^{2}
    & \eqdef &
    \Cov (\vthetav \cond \Yv)
    \eqdef
    \E \bigl\{ (\vthetav - \vthetavbm) (\vthetav - \vthetavbm)^{\T} \cond \Yv \bigr\}.
  \label{posteriorMomentsSieve}
  \end{EQA}
  Also define
  \begin{EQA}[c]
    \thetavdm
    \eqdef
    \thetavsm + \DPrcm^{-1} \xivrm.
  \label{effectiveTargetEstimate}
  \end{EQA}
  Now we are ready to state semiparametric version of Theorem~\ref{theorem: bvmTarget}.
  \begin{theorem}
  \label{theorem: bvmSemi}
    Suppose the conditions of Section~\ref{sec: conditions} and condition \((B)\). 
    Consider a non-informative prior on \( (\thetav,\etavm) \).
    Then there exists a random event \( \Omega(\xx) \) of a dominating probability at least \(1 - 4 \ex^{-\xx}\) such that it holds on \( \Omega(\xx) \)
    \begin{EQA}
    \label{meanPostBvmSemi}
      \| \DPrc (\vthetavbm - \thetavdm) \|^{2}
      & \le &
      (1 + \errSieveFisher) \spread^{*}(\rups, \xx) + \errSieveParam.
      \\
      \bigl\| \Id_{\dimp} - \DPrc \Covpostm^{2} \DPrc \bigr\|
      & \le &
      \errSieveFisher + (1 + \errSieveFisher) \spread^{*}(\rups, \xx),
    \label{covPostBvmSemi}
    \end{EQA}
    where \(\spread^{*}(\rups, \xx) = 4 \spread(\rups, \xx) + 16 \ex^{-\xx}\).
    Moreover, on \(\Omega(\xx)\) for any \( A \subset \R^{\dimp} \)
    \begin{EQA}
      && \nquad
      \exp \bigl( -2\spread(\rups, \xx) - 8\ex^{-\xx} \bigr)
      \bigl\{\P\bigl( \gammav \in A \bigr) - \tau\bigr\}
      - \ex^{- \xx}
      \\
      & \le &
      \P\bigl( \Covpostm^{-1} (\vthetav - \vthetavbm) \in A \cond \Yv \bigr)
      \\
      \qquad
      &&
      \qquad \le
      \exp \bigl( 2\spread(\rups, \xx) + 5\ex^{-\xx} \bigr)
      \bigl\{\P\bigl( \gammav \in A \bigr) + \tau\bigr\}, 
    \label{bvmResultMainSemi}
    \end{EQA}
    where 
    \begin{EQA}
	\tau 
	& \eqdef & 
	\frac{1}{2} \sqrt{\dimp \, \spread^{2}(\rups, \xx) 
	+ \bigl\{ 1 + \spread(\rups, \xx) \bigr\}^{2} \spread^{2}(\rups, \xx)}.
\label{tausedefs}
\end{EQA}
    % and \(\gammav\) is a standard Gaussian vector  in \(\RR^{\dimp}\).
  \end{theorem}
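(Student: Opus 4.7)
\medskip
\noindent\textbf{Proof proposal.}
The strategy is to reduce everything to Theorem~\ref{theorem: bvmTarget} and Corollary~\ref{corollary: bvmPost} applied to the finite-dimensional sieve model with parameter \( \upsilonvm = (\thetav, \etavm) \in \R^{\dimp + \msize} \), and then to transfer the resulting bounds from the sieve efficient Fisher information \( \DPrcm \) to the true efficient Fisher \( \DPrc \) by means of condition \( (B) \). Because the local and global conditions of Section~\ref{sec: conditions} are imposed on the full log-likelihood \( \LL(\upsilonv) \), their restriction to the sieve subspace yields the same conditions for \( \LL(\upsilonvm) \) with inherited constants. Applying Theorem~\ref{theorem: bvmTarget} to \( \LL(\upsilonvm) \) with a non-informative prior on \( (\thetav, \etavm) \) therefore delivers, on a single random event \( \Omega(\xx) \) of probability at least \( 1 - 4\ex^{-\xx} \), the sieve-level bounds
\begin{EQA}
  \| \DPrcm (\vthetavbm - \thetavdm) \|^{2}
  & \le &
  \spread^{*}(\rups, \xx),
  \\
  \bigl\| \Id_{\dimp} - \DPrcm \Covpostm^{2} \DPrcm \bigr\|
  & \le &
  \spread^{*}(\rups, \xx),
\end{EQA}
together with the distributional control of Corollary~\ref{corollary: bvmPost} for \( \Covpostm^{-1}(\vthetav - \vthetavbm) \).

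\medskip
\noindent\textbf{Transfer to the true metric.}
Condition \( (B) \) reads \( \bigl\| \Id_{\dimp} - \DPrc \DPrcm^{-2} \DPrc \bigr\| \le \errSieveFisher \), which is equivalent to the operator inequality \( \DPrc^{2} \le (1 + \errSieveFisher) \DPrcm^{2} \) as quadratic forms. Applied to the first sieve bound this gives
\begin{EQA}
  \| \DPrc (\vthetavbm - \thetavdm) \|^{2}
  & \le &
  (1 + \errSieveFisher) \, \| \DPrcm (\vthetavbm - \thetavdm) \|^{2}
  \le
  (1 + \errSieveFisher) \, \spread^{*}(\rups, \xx),
\end{EQA}
and the additive \( \errSieveParam \) in \eqref{meanPostBvmSemi} is absorbed through the triangle inequality using the target bias \( \| \DPrc (\thetavs - \thetavsm) \|^{2} \le \errSieveParam \) from \( (B) \) whenever the centering is to be interpreted against quantities built from \( \thetavs \) rather than \( \thetavsm \). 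For the covariance I would use the algebraic decomposition
\begin{EQA}
  \Id_{\dimp} - \DPrc \Covpostm^{2} \DPrc
  & = &
  \bigl( \Id_{\dimp} - \DPrc \DPrcm^{-2} \DPrc \bigr)
  + \DPrc \DPrcm^{-1} \bigl( \Id_{\dimp} - \DPrcm \Covpostm^{2} \DPrcm \bigr) \DPrcm^{-1} \DPrc,
\end{EQA}
bounding the first summand by \( \errSieveFisher \) directly from \( (B) \), and the second by \( (1 + \errSieveFisher) \spread^{*}(\rups, \xx) \) using \( \| \DPrc \DPrcm^{-1} \|^{2} \le 1 + \errSieveFisher \) together with the sieve covariance estimate above.

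\medskip
\noindent\textbf{Distributional statement and main obstacle.}
The inequality \eqref{bvmResultMainSemi} is intrinsic to the sieve posterior since both \( \Covpostm \) and \( \vthetavbm \) are computed entirely within the finite-dimensional sieve model. Consequently it is a direct consequence of Corollary~\ref{corollary: bvmPost} applied to \( \LL(\upsilonvm) \) on the same dominating event \( \Omega(\xx) \), and no transfer via \( (B) \) is needed for this part. The main technical obstacle in the whole argument is to verify that the local and global conditions of Section~\ref{sec: conditions} descend cleanly to the sieve model \emph{uniformly in} \( \msize \): the bracketing and identifiability constants \( \rddelta(\rups), \rhor, \nunu, \fis, \corrDF \) governing \( \LL(\upsilonvm) \) must be controlled in terms of those governing \( \LL(\upsilonv) \), so that \( \spread(\rups, \xx) \) does not deteriorate with the sieve dimension. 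Once this inheritance is secured, the two algebraic steps above yield the claimed estimates, with \( \errSieveFisher \) and \( \errSieveParam \) playing the role of the nonparametric approximation error.
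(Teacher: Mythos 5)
Your proposal is correct and follows essentially the same route as the paper: apply the finite-dimensional Theorem~\ref{theorem: bvmTarget} to the sieve model and then transfer the mean and covariance bounds to the \( \DPrc \)-metric via condition \( (B) \), using \( \| \DPrc \DPrcm^{-2} \DPrc \| \le 1 + \errSieveFisher \) for the quadratic form and the same additive split of \( \Id_{\dimp} - \DPrc \Covpostm^{2} \DPrc \) (your explicit matrix identity is just the paper's sup-over-unit-vectors computation written in closed form), with the distributional bound \eqref{bvmResultMainSemi} taken directly from the sieve-level Corollary~\ref{corollary: bvmPost} exactly as the paper implicitly does.
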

  
\section{The i.i.d. case and critical dimension}
\label{sec: bvmIid}
  This section comments how the previously obtained general results can be linked to the classical asymptotic results in the statistical literature. The nice feature of the whole approach based on the local bracketing is that all the results are stated under the same list of conditions: once checked one can directly apply any of the mentioned results. Typical examples include i.i.d., GLM, and median regression models. Here we briefly discuss how the BvM result can be applied to one typical case, namely, to an i.i.d. experiment.

  Let \( \Yv = (Y_{1},\ldots,Y_{n})^{\T} \) be an i.i.d. sample from a measure \( P \).
  Here we suppose the conditions of Section~5.1 in \cite{Sp2011} on \( P \) and
  \( (P_{\upsilonv}) \) to be fulfilled. 
%  These conditions are essential analogues of the conditions of Section~\ref{sec: conditions} but for likelihood of one observation.
  We admit that the parametric assumption \( P \in (P_{\upsilonv}, \upsilonv \in \Upsilon) \) can be misspecified and consider the asymptotic setup with the full dimension \( \dimtotal = \dimn \) which depends on 
  \( \nsize \) and grows to infinity as \( \dimn \to \infty \). 
%  In this setup the following theorem is valid.
  \begin{theorem}
  \label{theorem: bvmIid}
    Suppose the conditions of Section~5.1 in \cite{Sp2011}.
    Let also \( \dimn \to \infty \) and \( \dimn^{3}/\nsize \to 0 \). 
    Then the result of Theorem~\ref{theorem: bvmTarget} holds with
    \( \spread(\rups, \xx) = \CONST \sqrt{\dimn^3/\nsize} \),
    \( \DFc^{2} = \nsize \IF_{\upsilonvs} \), where \( \IF_{\upsilonvs} \) is the Fisher information of
    \( (P_{\upsilonv}) \) at \( \upsilonvs \).
  \end{theorem}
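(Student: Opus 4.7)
The proof plan is to verify all conditions of Section~\ref{sec: conditions} in the i.i.d. setting, quantify the key scaling parameters $\rddelta(\rr)$, $\rhor$, and $\qqQ(\xx)$, and then optimize the radius $\rups$ so that $\spread(\rups,\xx)$ takes the claimed form. Once those quantitative bounds are in place, Theorem~\ref{theorem: bvmTarget} applies directly, and the conclusion follows.

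First I would set up the i.i.d.\ scaling. Since $\LL(\upsilonv)=\sum_{i=1}^{\nsize}\ell(Y_i,\upsilonv)$ with $\ell$ smooth in $\upsilonv$, one has $\DFc^{2}=-\nabla^{2}\E\LL(\upsilonvs)=\nsize\,\IF_{\upsilonvs}$, which gives the normalization asserted in the theorem. The score $\nabla\LL(\upsilonvs)=\sum_i\nabla\ell(Y_i,\upsilonvs)$ is a sum of centered i.i.d.\ vectors, so under the moment hypothesis of Section~5.1 of \cite{Sp2011} one gets condition $\bb{(E\!D_0)}$ with $\VFc^{2}\asymp\nsize\,\VF^{2}$, where $\VF^{2}$ is the per-observation variance of $\nabla\ell$. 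A Bernstein/Cram\'er argument applied to the matrix $\nabla^{2}\zeta(\upsilonv)=\sum_i[\nabla^{2}\ell(Y_i,\upsilonv)-\E\nabla^{2}\ell(Y_i,\upsilonv)]$ yields $\bb{(E\!D_2)}$ with $\rhor\asymp\CONST/\sqrt{\nsize}$, since bilinear forms normalized by $\|\DFc\gammav_1\|\|\DFc\gammav_2\|\asymp\nsize\|\IF^{1/2}\gammav_1\|\|\IF^{1/2}\gammav_2\|$ average $\nsize$ i.i.d.\ mean-zero quantities. Identifiability $\bb{(\AssId)}$ and the global identification $\bb{(\cc{L}_{\rr})}$ carry over directly from Section~5.1 of \cite{Sp2011}.

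Next I would bound the curvature error. For $\upsilonv\in\Upsilons(\rr)$ a Taylor expansion of $\DFc^{2}(\upsilonv)$ around $\upsilonvs$ combined with $\|\upsilonv-\upsilonvs\|\le\rr/\sqrt{\nsize\,\lambda_{\min}(\IF_{\upsilonvs})}$ and the third-derivative bound from Section~5.1 of \cite{Sp2011} gives $\bb{(\cc{L}_0)}$ with $\rddelta(\rr)\asymp\CONST\,\rr/\sqrt{\nsize}$. For the stochastic term entering $\spread(\rups,\xx)$, the quantity $\qqQ(\xx)$ originates from a uniform concentration of $\nabla^{2}\zeta$ over the local ball $\Upsilons(\rups)\subset\R^{\dimn}$; a standard chaining/covering argument on a $\dimn$-dimensional ball (metric entropy of order $\dimn\log(1/\epsilon)$) gives $\qqQ(\xx)\asymp\CONST\sqrt{\dimn+\xx}$. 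Putting the pieces together,
\begin{EQA}[c]
\spread(\rups,\xx)
\;=\;
\bigl\{\rddelta(\rups)+6\nunu\,\qqQ(\xx)\,\rhor\bigr\}\rups^{2}
\;\le\;
\CONST\,\bigl(\rups+\sqrt{\dimn+\xx}\bigr)\,\rups^{2}/\sqrt{\nsize}\,.
\end{EQA}

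Finally I choose $\rups\asymp\sqrt{\dimn+\xx}$, which is exactly the deviation radius of the MLE and is compatible with the large-deviation tail guaranteed by $\bb{(\cc{L}_{\rr})}$. Substituting gives $\spread(\rups,\xx)\le\CONST\sqrt{(\dimn+\xx)^{3}/\nsize}$; absorbing $\xx$ into the constant (it enters only logarithmically in the events of dominating probability) yields the claimed rate $\spread(\rups,\xx)=\CONST\sqrt{\dimn^{3}/\nsize}$. Under the assumption $\dimn^{3}/\nsize\to0$ this quantity is small, so Theorem~\ref{theorem: bvmTarget} applies and both the mean/covariance concentration and the total-variation BvM approximation follow.

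The main obstacle is the uniform control of $\nabla^{2}\zeta$ on the local ball that produces the factor $\qqQ(\xx)$: getting a factor no worse than $\sqrt{\dimn+\xx}$ (rather than $\dimn$) is what makes $\dimn^{3}/\nsize$ the sharp critical threshold. This requires the chaining argument to respect the matrix structure normalization by $\DFc$ and the exponential moment bound from $\bb{(E\!D_2)}$, and in fact corresponds to the bracketing device emphasized in the introduction; once this step is carried out, everything else is essentially a matter of plugging in scalings.
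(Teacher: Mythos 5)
Your proposal is correct and follows essentially the same route as the paper: identify the i.i.d.\ scalings \( \rddelta(\rups) \asymp \rups/\sqrt{\nsize} \), \( \rhor \asymp 1/\sqrt{\nsize} \), \( \qqQ(\xx) \asymp \sqrt{\dimn + \xx} \), take \( \rups^{2} \asymp \dimn + \xx \) as dictated by the large-deviation bound, and plug into the definition of \( \spread(\rups,\xx) \) to get \( \sqrt{(\dimn+\xx)^{3}/\nsize} \). The paper's own proof is just this computation (with \( \xx \) fixed of order \( \dimn \)), deferring the verification of the conditions to Section~5.1 of \cite{Sp2011} exactly as you do; your extra detail on where each scaling comes from is consistent with that reference.
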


  A similar result about asymptotic normality of the posterior in a linear regression model can be found in \cite{Gh1999}. However, the convergence is proved under the condition \(\dimn^{4} \log (\dimn) / \nsize \to 0\) which appears to be too strong. \cite{Gh2000} showed that the dimensionality constraint can be relaxed to \( \dimn^{3}/\nsize \to 0 \) for exponential models with a product structure. \cite{BoGa2009} proved the BvM result in a specific class of i.i.d. model with discrete probability distribution under the condition \( \dimn^{3}/\nsize \to 0 \). Further examples and the related conditions for Gaussian models are presented in \cite{Jo2012}.

\subsection{Critical dimension}
\label{sec: criticalDimension}
  This section discusses the issue of a \emph{critical dimension}. 
  Namely we show that the condition \( \dimn = o(\nsize^{1/3}) \) in Theorem~\ref{theorem: bvmIid}
  for the validity of the BvM result cannot be dropped or relaxed in a general situation.
  Namely, we present an example for which \( \dimn^{3} / \nsize \ge \beta^{2} > 0 \) and the posterior distribution does not concentrate around MLE.

  Let \( \nsize \) and \( \dimn \) be such that \( \Mn = \nsize / \dimn \) is an integer. We consider a simple Poissonian model with \( Y_{i} \sim \Poisson(\upsilon_{j}) \) for \( i \in \Ij_{j} \), where \( \Ij_{j} \eqdef \{ i: \lceil i/\Mn \rceil = j \} \) for \( j=1,\ldots,\dimn \) and \( \lceil x \rceil\) is the nearest integer greater or equal to \(x\).
  Let also \( u_{j} = \log \upsilon_{j} \) be the canonical parameter. The log-likelihood \( \LL(\uv) \) with \( \uv = (u_{1},\ldots,u_{\dimn}) \) reads as
  \begin{EQA}[c]
    \LL(\uv)
    =
    \sum_{j=1}^{\dimn} \bigl( Z_{j} u_{j} - \Mn e^{u_{j}} \bigr) ,
  \label{likelihoodPois}
  \end{EQA}
  where
  \begin{EQA}[c]
    Z_{j}
    \eqdef
    \sum_{i \in \Ij_{j}} Y_{i} \, .
  \label{partialObservSumPois} 
  \end{EQA}
  We consider the problem of estimating the mean of the \( u_{j} \)'s:
  \begin{EQA}[c]
    \theta
    =
    \frac{1}{\dimn} \bigl( u_{1} + \ldots + u_{\dimn} \bigr).
  \label{targetParamPois}
  \end{EQA}
  Below we study this problem in the asymptotic setup with
  \( \dimn \to \infty \) as \( \nsize \to \infty \)
  when the underlying measure \( \P \) corresponds to
  \( \us_{1} = \ldots = \us_{\dimn} = \us \) for some \( \us \) yielding \( \thetas = \us \).
  The value \( \us \) will be specified later.
  We consider an i.i.d. exponential prior on the parameters \( \upsilon_{j} \) of Poisson distribution:
  \begin{EQA}[c]
    \upsilon_{j}
    \sim
    \operatorname{Exp}(\mu).
  \end{EQA}
  Below we allow that \(\mu\) may depend on \( \nsize \).
  Our results are valid for \(\mu \le \CONST \sqrt{\frac{\nsize}{\log \nsize}} \).
  The posterior is Gamma distributed:
  \begin{EQA}[c]
    \upsilon_{j} \cond \Yv \sim \operatorname{Gamma}(\alpha_{j}, \mu_{j}),
  \end{EQA} 
  where \(\alpha_{j} = 1 + \sum_{i \in \Ij_{j}} Y_{i}\), \(\mu_{j} = \frac{\mu}{\Mn \mu + 1}\).

  First we describe the profile maximum likelihood estimator \(\tilde{\theta}_{\nsize}\) of the target parameter \( \theta \).
  The MLE for the full parameter \( \upsilonv \) reads as
  \( \tilde{\upsilonv} = (\tilde{\upsilon}_{1}, \ldots, \tilde{\upsilon}_{\dimn})^{\T} \) with
  \begin{EQA}[c]
    \tilde{\upsilon}_{j}
    =
    Z_{j} / \Mn.
  \end{EQA}
  Thus, the profile MLE \( \tilde{\theta}_{\nsize} \) reads as
  \begin{EQA}[c]
    \tilde{\theta}_{\nsize}
    =
    \frac{1}{\dimn} \sum_{j = 1}^{\dimn} \log (\tilde{\upsilon}_{j}) .
  \end{EQA}
  Furthermore, the efficient Fisher information \( \DPrc^{2} \) is equal to
  \( \dimn^{-1} \nsize \);
  see Lemma~\ref{lemma: effFisherCritExamplePois} below. 
  As \( \tilde{\theta}_{\nsize} \) is the profile MLE it is an efficient with 
  the asymptotic variance equal to \( \DPrc^{-2} \).

  \begin{theorem}
  \label{theorem: bvmPois}
    Let \( Y_{i} \sim \Poisson(\upsilons) \) for all \( i = 1,\ldots,\nsize \),
    \(\upsilons = 1 / \dimn \).
    Then
    \begin{enumerate}
      \item If \( \dimn^{3} / \nsize \to 0 \) as \( \dimn \to \infty \), then
        \begin{EQA}[c]
          \dimn^{1/2} \nsize^{1/2} \bigl(\theta - \tilde{\theta}_{\nsize}\bigr) \cond \Yv
          \tow
          \ND(0,1).
        \label{bvmPoisGood}
        \end{EQA}

      \item Let \( \dimn^{3} / \nsize \equiv \beta > 0 \). Then
        \begin{EQA}[c]
          \dimn^{1/2} \nsize^{1/2} \bigl(\theta - \tilde{\theta}_{\nsize}\bigr) \cond \Yv
          \tow
          \ND(\beta/2,1) .
        \label{bvmPoisInterm}
        \end{EQA}

      \item If \( \dimn^{3} / \nsize \to \infty \), but \( \dimn^{4} / \nsize^{3/2} \to 0 \), then
        \begin{EQA}[c]
          \dimn^{1/2} \nsize^{1/2} \bigl(\theta - \tilde{\theta}_{\nsize}\bigr) \cond \Yv
          \tow
          \infty.
        \label{bvmPoisBad}
        \end{EQA}
    \end{enumerate}
  \end{theorem}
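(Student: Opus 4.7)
The plan exploits the conjugacy of the exponential prior with the Poisson likelihood. Under the \(\operatorname{Exp}(\mu)\) prior the posterior factorizes into independent \(\operatorname{Gamma}(Z_{j}+1,\mu_{j})\) distributions with \(\mu_{j}=\mu/(\Mn\mu+1)\), and consequently \(u_{j}=\log\upsilon_{j}\cond\Yv\) are independent with
\begin{EQA}[c]
\E[u_{j}\cond\Yv]=\log\mu_{j}+\psi(Z_{j}+1),
\qquad
\Var[u_{j}\cond\Yv]=\psi'(Z_{j}+1),
\end{EQA}
where \(\psi\) is the digamma function. Thus \(\theta\cond\Yv=\dimn^{-1}\sum_{j}u_{j}\) is an average of independent variables, and its conditional law can be analysed via a Berry--Esseen bound together with an expansion of the first two moments, using the asymptotics \(\psi(z+1)=\log z+(2z)^{-1}-(12z^{2})^{-1}+O(z^{-4})\) and \(\psi'(z+1)=z^{-1}+O(z^{-2})\).

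First I would handle the data side. Under \(\P\) the counts \(Z_{j}\) are iid \(\Poisson(\lambda)\) with \(\lambda=\nsize/\dimn^{2}\), and \(\lambda\to\infty\) in each of the three regimes (in Case 3 the side condition \(\dimn^{4}/\nsize^{3/2}\to 0\) additionally provides the margin needed below for the higher-order error terms). Standard Poisson tail bounds give \(Z_{j}=\lambda(1+O_{\P}(\lambda^{-1/2}))\) uniformly in \(j\), together with the LLN-type statement \(\dimn^{-1}\sum_{j}Z_{j}^{-1}=\lambda^{-1}(1+o_{\P}(1))\).

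Plugging in the digamma/trigamma expansions and the concentration of the \(Z_{j}\)'s then yields
\begin{EQA}
\E[\theta\cond\Yv]-\tilde\theta_{\nsize}
&=&\log(\mu_{j}\Mn)+\frac{1}{\dimn}\sum_{j=1}^{\dimn}\bigl\{\psi(Z_{j}+1)-\log Z_{j}\bigr\}
=\frac{1}{2\lambda}(1+o_{\P}(1)),
\\
\Var[\theta\cond\Yv]
&=&\frac{1}{\dimn^{2}}\sum_{j=1}^{\dimn}\psi'(Z_{j}+1)
=\DPrc^{-2}(1+o_{\P}(1)),
\end{EQA}
the prior contribution \(\log(\mu_{j}\Mn)=O((\Mn\mu)^{-1})\) being absorbed into the remainder under the assumed bound on \(\mu\). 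A conditional Berry--Esseen bound applied to the zero-mean sum \(\dimn^{-1}\sum_{j}\{u_{j}-\E[u_{j}\cond\Yv]\}\), whose summands have third conditional absolute moment of order \(Z_{j}^{-3/2}\), gives a Kolmogorov rate of order \(\dimn^{-1/2}\to 0\).

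Combining the bias and the asymptotically normal noise, the rescaled posterior of \(\theta-\tilde\theta_{\nsize}\) is Gaussian with unit variance and deterministic shift equal to the rescaling times \((2\lambda)^{-1}\); substituting \(\lambda=\nsize/\dimn^{2}\) produces the three claimed asymptotic behaviours corresponding to \(\dimn^{3}/\nsize\to 0\), \(\dimn^{3}/\nsize\to\beta\), and \(\dimn^{3}/\nsize\to\infty\). The main technical obstacle is the simultaneous control of the \(O(Z_{j}^{-2})\) remainder in the digamma expansion, the fluctuations of \(\dimn^{-1}\sum_{j}Z_{j}^{-1}\) about its mean, and the Berry--Esseen error, all relative to the leading bias \((2\lambda)^{-1}\); in Case 3 the condition \(\dimn^{4}/\nsize^{3/2}\to 0\) is calibrated precisely so that each of these corrections is dominated by the diverging leading bias.
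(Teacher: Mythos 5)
Your proposal is correct and reaches the same conclusions by the same high-level strategy as the paper --- exploit the Gamma conjugacy of the posterior, identify the bias of \(\E(\theta\cond\Yv)\) relative to \(\tilde\theta_{\nsize}\) as \((2\lambda)^{-1}\) with \(\lambda = \nsize/\dimn^{2}\), and apply a conditional CLT to the independent coordinates --- but the execution differs in a way worth noting. The paper writes \(\upsilon_{j}/(Z_{j}/\Mn)\) as \(\frac{\Mn\mu_{j}\alpha_{j}}{\alpha_{j}-1}(1+\alpha_{j}^{-1/2}\gamma_{j})\) with \(\gamma_{j}\) the standardized Gamma variable, Taylor-expands the logarithm, and must separately control \(\sum_{j}\gamma_{j}\) (Lyapunov CLT), \(\sum_{j}\gamma_{j}^{2}\) (law of large numbers), and a cubic remainder of order \(\betan^{3}/\sqrt{\dimn}=\dimn^{4}/\nsize^{3/2}\); the net bias \(\sum_{j}(\alpha_{j}-1)^{-1}-\sum_{j}(2\alpha_{j})^{-1}\gamma_{j}^{2}\approx\dimn/(2\lambda)\) emerges from the cancellation of two terms. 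You instead use the exact log-Gamma moments \(\E[u_{j}\cond\Yv]=\log\mu_{j}+\psi(Z_{j}+1)\) and \(\Var[u_{j}\cond\Yv]=\psi'(Z_{j}+1)\), so the bias appears directly as \(\psi(Z_{j}+1)-\log Z_{j}\approx(2Z_{j})^{-1}\) and the variance as \(\dimn^{-2}\sum_{j}Z_{j}^{-1}\), with a Berry--Esseen bound (rate \(\dimn^{-1/2}\)) replacing the Taylor-plus-Lyapunov argument; this is cleaner and makes the role of the side condition \(\dimn^{4}/\nsize^{3/2}\to 0\) in Case~3 transparent. Two small points: (i) the prior term \(\log(\mu_{j}\Mn)=-(\Mn\mu)^{-1}(1+o(1))\) is negligible only if \(\Mn\mu\) grows fast enough relative to \(\DPrc^{-1}=\dimn^{1/2}\nsize^{-1/2}\), which is really a lower-bound requirement on \(\mu\) --- the paper is equally casual about this, so you are not losing anything relative to its proof; (ii) your shift after rescaling by \(\DPrc\) is \(\betan/2=\tfrac12\sqrt{\dimn^{3}/\nsize}\), which agrees with the paper's own proof (its internal \(\betan\equiv\beta\) convention) rather than with the literal \(\beta/2\) of the theorem header --- that discrepancy is in the paper, not in your argument.
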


  We carried out a series of experiments to numerically demonstrate the results of Theorem~\ref{theorem: bvmPois}. The dimension of parameter space was fixed \(\dimn = 10000\). Three cases were considered:
  \begin{enumerate}
    \item \(\dimn^{3/2} / \nsize^{1/2} = \frac{1}{\log \dimn}\), which corresponds to \(\dimn^{3} / \nsize \to 0, \nsize \to \infty\).
    \item \(\dimn^{3/2} / \nsize^{1/2} \equiv 1\).
    \item \(\dimn^{3/2} / \nsize^{1/2} = \log \dimn\), which corresponds to \(\dimn^{3} / \nsize \to \infty, \nsize \to \infty\).
  \end{enumerate}
  For each sample 10000 realizations of \(\Yv\) were generated from the exponential distribution \( \operatorname{Exp}(\upsilon_{*})\) and so were corresponding posterior values \(\theta \cond \Yv\).
  The resulting posterior distribution for three cases is demonstrated on
  Figure~\ref{figure: bvmPois}. %, \ref{figure:intermediate} and \ref{figure:bad}.
  It can be easily seen that results of Theorem~\ref{theorem: bvmPois} are numerically confirmed.

  \begin{figure}
    \begin{center}
      \includegraphics[width=0.32\textwidth]{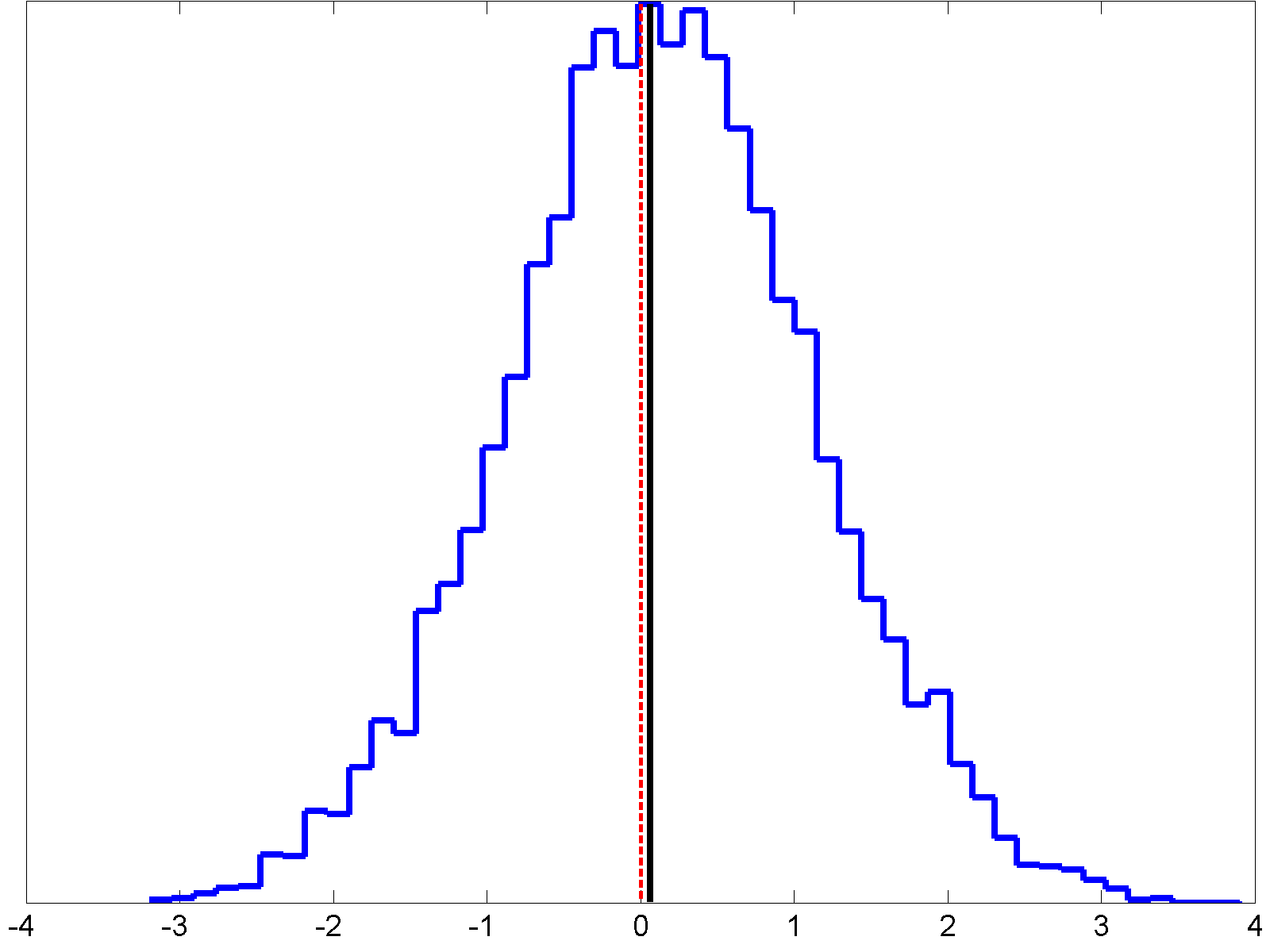}
      \includegraphics[width=0.32\textwidth]{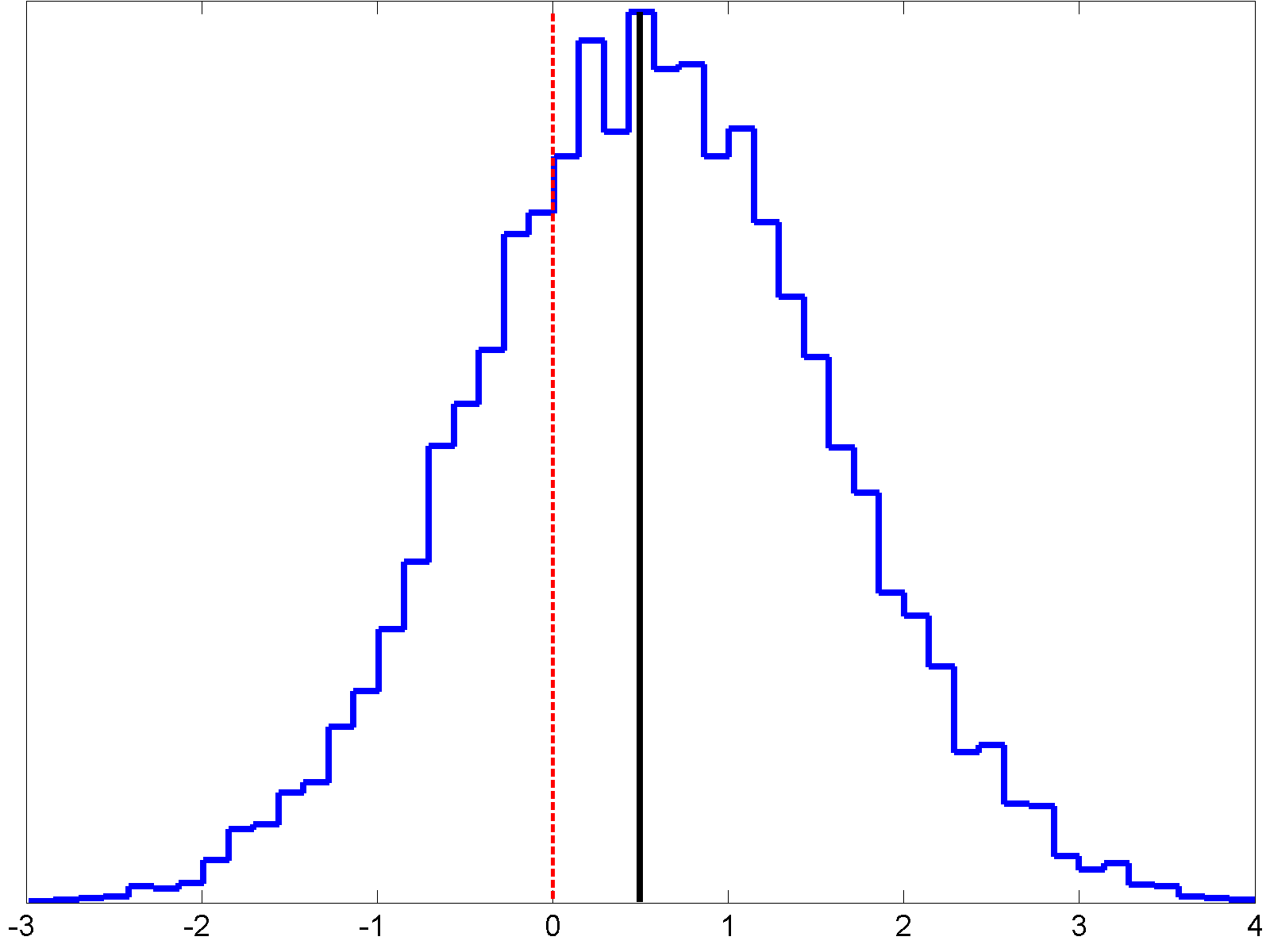}
      \includegraphics[width=0.32\textwidth]{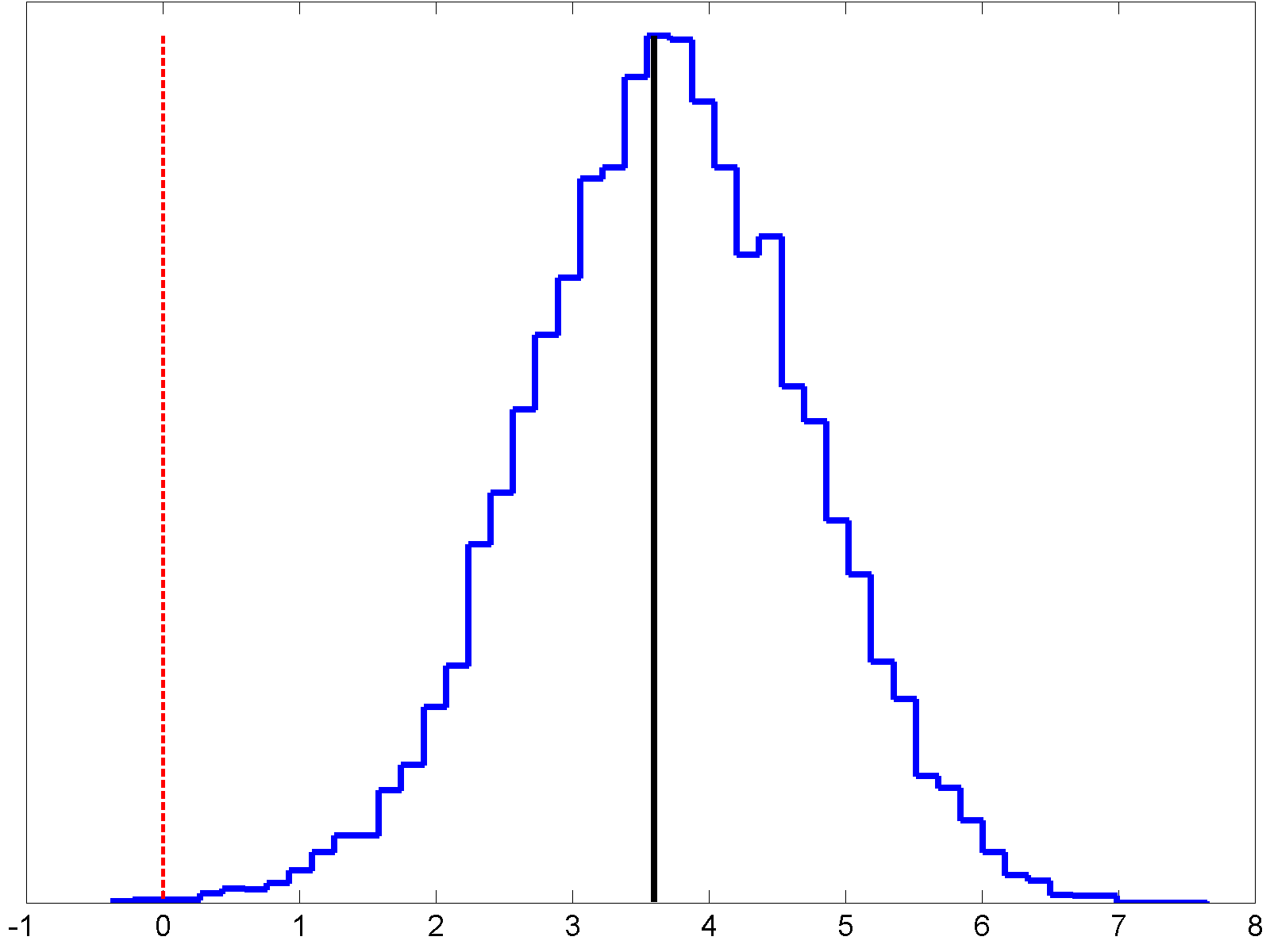}
    \end{center}
    \caption{Posterior distribution of \(\betan^{-1} \dimn \bigl(\theta - \tilde{\theta}_{\nsize}\bigr)\)
    for \(\betan = 1 / \log (\dimn)\), \(\betan = 1\), and \(\betan = \log (\dimn)\). Solid line is for posterior mean and dashed line is for true mean.}
  \label{figure: bvmPois}
  \end{figure}

\section{Examples}
\label{sec: examples}
This section presents a number of examples illustrating the general results of 
Section~\ref{sec: bvmTargetFinite}.

\subsection{Linear regression}
\label{sec: linearRegression}
  Let \(\Yv = (Y_{1}, \dots, Y_{\nsize})\) be a random vector
  \begin{EQA}
%    && \nquad \\
    \Yv & = & \fv + \epsv,
  \end{EQA}
  where the errors \(\epsv = (\eps_{1}, \ldots, \eps_{\nsize})\) are independent and the mean vector \(\fv \in \RR^{\nsize}\) is fixed. 
  Let \(\fv = \Psi \upsilonvs\), where \(\Psi = \{\Psi_{1}, \dots, \Psi_{\dimtotal}\}\) is a set of independent regressors and \(\upsilonvs \in \RR^{\dimtotal}\) is a vector of unknown parameters of the model. 

  First, we consider Gaussian case, i.e. \(\eps_{i} \in \mathcal{N}(0, \sigma_{\nsize}^{2} \Id_{\nsize}), i = 1, \dots, \nsize\), with \(\Id_{\nsize}\) the \(\nsize \times \nsize\) identity matrix. Variance of observations \(\sigma_{\nsize}^{2}\) is known but may depend on sample size \(\nsize\). The number of more specific settings are included in this model such as Gaussian sequence model and regression of a function in a Sobolev class. In this case approximation is valid globally and ordinary requirements for concentration of Gaussian distribution give us a condition \(\dimtotal = o(\nsize)\) BvM result to be valid, see \cite{Bo2011}. Also it is worth noting that our condition in case of a Gaussian prior (see Section~\ref{theorem: bvmGaussPrior}) coincide with one of \cite{Jo2012}.

  Now consider more general situation, when errors come from a general distribution with a density 
  \(f(\cdot)\): \( \eps_{i} \sim P_f \)
   Denote \(h(x) = \log f(x)\). The log-likelihood function for this problem reads as
  \begin{EQA}
    && \nquad 
    \LL(\upsilonv) 
    = \sum_{i = 1}^{n} \log f(Y_{i} - \Psi_{i}^{\T} \upsilonv) 
    = \sum_{i = 1}^{\nsize} h(Y_{i} - \Psi_{i}^{\T} \upsilonv).
  \end{EQA}
  Suppose that \(h(z)\) is twice continuously differentiable and 
  \( \fpp = \int h''(z) f(z) dz < \infty\). 
  If the model is correctly specified, then 
  \begin{EQA} 
    \DFc^{2} 
    &=& 
    -\nabla^{2} \E \sum_{i = 1}^{\nsize} h(Y_{i} - \Psi_{i}^{\T} \upsilonvs)
    =
    \int h''(z) f(z) dz \cdot \sum_{i = 1}^{\nsize} \Psi_{i} \Psi_{i}^{\T} 
    = 
    \fpp \sum_{i = 1}^{\nsize} \Psi_{i} \Psi_{i}^{\T} .
  \end{EQA}
  Similarly, 
  \begin{EQA} 
%    && \nquad
    \DF^{2}(\upsilonv) 
    &=& 
    -\nabla^{2} \E \sum_{i = 1}^{\nsize} h(Y_{i} - \Psi_{i}^{\T} \upsilonv)
%    \\
    =
    \sum_{i = 1}^{\nsize} \Psi_{i} \Psi_{i}^{\T} \int h''(z - \Psi_{i}^{\T} (\upsilonv - \upsilonvs)) f(z) dz.
  \end{EQA}
  If distribution \(P_f\) is subexponential then \((ED_0)\) is clearly satisfied. 
  Then \((\LL_0)\) and \((\LL \rr)\) are also valid by arguments similar to the GLM case 
  under the Lipschitz continuity assumption on \(h''(\cdot)\).
  \begin{lemma}
  \label{lemma: expectApproxLinear}
    Let
    \begin{EQA}[c]
      |h''(z) - h''(z_{0})| \le L |z - z_{0}|, ~ z, z_0 \in \RR .
    \end{EQA}
    Then
    \begin{EQA}[c]
      \rddelta(\rr) \le \frac{L \, \rr}{\fpp \, N_{1}^{1/2}},
    \end{EQA}
    where
    \begin{EQA}
      N_{1}^{-1/2}  
      & \eqdef &
      \max_{i} \sup_{\gammav \in \R^{p}} 
      \frac{|\Psi_{i}^{\T} \gammav|}{\| \DFc \gammav \|} \, .
    \end{EQA}    
  \end{lemma}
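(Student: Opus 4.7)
The goal is to bound the operator norm of $\DFc^{-1}\bigl[\DFc^{2}(\upsilonv)-\DFc^{2}\bigr]\DFc^{-1}$ uniformly over $\upsilonv \in \Upsilons(\rr)$, and to show it is at most $L\rr/(\fpp N_{1}^{1/2})$. The plan is direct: use the explicit representations of $\DFc^{2}$ and $\DFc^{2}(\upsilonv)$ already computed just before the lemma, apply the Lipschitz assumption on $h''$ inside the integral, and then invoke the definition of $N_{1}$ twice — first to bound the scalar increment $\Psi_{i}^{\T}(\upsilonv-\upsilonvs)$ on the local set, and second to control the quadratic form that remains.

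More precisely, subtracting the two displays yields
\begin{EQA}[c]
\DFc^{2}(\upsilonv) - \DFc^{2}
=
\sum_{i=1}^{\nsize} \Psi_{i} \Psi_{i}^{\T} \,
   \int \bigl\{ h''\bigl(z - \Psi_{i}^{\T}(\upsilonv-\upsilonvs)\bigr) - h''(z) \bigr\} f(z)\,dz,
\end{EQA}
and the Lipschitz hypothesis bounds the bracketed integrand pointwise by $L\,|\Psi_{i}^{\T}(\upsilonv-\upsilonvs)|$. For any $\wv \in \R^{\dimtotal}$, setting $\gammav = \DFc^{-1}\wv$ and using the definition of $N_{1}$, I have $|\Psi_{i}^{\T}\gammav| \le N_{1}^{-1/2}\|\DFc\gammav\| = N_{1}^{-1/2}\|\wv\|$, and similarly $|\Psi_{i}^{\T}(\upsilonv-\upsilonvs)| \le N_{1}^{-1/2}\,\rr$ because $\upsilonv \in \Upsilons(\rr)$. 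Therefore
\begin{EQA}[c]
\bigl|\wv^{\T}\DFc^{-1}\bigl[\DFc^{2}(\upsilonv)-\DFc^{2}\bigr]\DFc^{-1}\wv\bigr|
\le
L\,N_{1}^{-1/2}\,\rr \,\sum_{i=1}^{\nsize} \bigl(\Psi_{i}^{\T}\DFc^{-1}\wv\bigr)^{2}.
\end{EQA}

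The remaining step is to recognize the quadratic form on the right as $\wv^{\T}\DFc^{-1}\bigl(\sum_{i}\Psi_{i}\Psi_{i}^{\T}\bigr)\DFc^{-1}\wv = \fpp^{-1}\,\wv^{\T}\DFc^{-1}\DFc^{2}\DFc^{-1}\wv = \fpp^{-1}\|\wv\|^{2}$, where I used the identity $\DFc^{2} = \fpp \sum_{i}\Psi_{i}\Psi_{i}^{\T}$ derived above the lemma. Dividing through by $\|\wv\|^{2}$ and taking the supremum gives the operator-norm bound
\begin{EQA}[c]
\bigl\|\DFc^{-1}\DFc^{2}(\upsilonv)\DFc^{-1} - \Id_{\dimtotal}\bigr\|
\le
\frac{L\,\rr}{\fpp\,N_{1}^{1/2}},
\end{EQA}
which is exactly the claimed bound on $\rddelta(\rr)$.

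There is no real obstacle here; it is essentially a two-line calculation. The only point that requires care is using the definition of $N_{1}^{-1/2}$ consistently — once to pull the scalar $|\Psi_{i}^{\T}(\upsilonv-\upsilonvs)|$ outside the sum, and once to convert $(\Psi_{i}^{\T}\DFc^{-1}\wv)^{2}$ into something that telescopes with $\DFc^{2}$. The correctly-specified identity $\DFc^{2} = \fpp \sum_{i}\Psi_{i}\Psi_{i}^{\T}$ is what produces the factor $1/\fpp$ in the final estimate.
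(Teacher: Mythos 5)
Your proof is correct and follows exactly the route the paper intends: the paper omits an explicit proof of this lemma, remarking only that the argument is ``similar to the GLM case,'' and the explicit proof it gives for Lemma~\ref{lemma: expectApproxGlm} is precisely your chain of inequalities (Lipschitz bound on the increment of the second derivative, the definition of the effective sample size to pull out $\rr$, and the telescoping of $\sum_{i}\Psi_{i}\Psi_{i}^{\T}$ against $\DFc^{2}$). The only difference from the GLM computation is that here the weight $\fpp$ is constant rather than $d''(\Psi_{i}^{\T}\upsilonvs)$, which is exactly where your factor $1/\fpp$ comes from, matching the stated bound.
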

  The next interesting question is checking the condition \((ED_{2})\). The stochastic part of the likelihood reads as follows:
  \begin{EQA}
    && \nquad
    \zeta(\upsilonv) = \sum_{i = 1}^{\nsize} h(Y_{i} - \Psi_{i}^{\T} \upsilonv) - \E h(Y_{i} - \Psi_{i}^{\T} \upsilonv).
  \end{EQA}
  To check the condition \((ED_{2})\) we need to compute the Hessian of the \(\zeta(\upsilonv)\):
  \begin{EQA}
    \nabla^{2} \zeta(\upsilonv) 
    &=& 
    \sum_{i = 1}^{\nsize} 
    \bigl\{ h''(Y_{i} - \Psi_{i}^{\T} \upsilonv) - \E h''(Y_{i} - \Psi_{i}^{\T} \upsilonv) \bigr\} 
    \Psi_{i} \Psi_{i}^{\T} .
  \end{EQA}
  Finally, we need to impose the mild condition on marginal likelihood:
  \begin{description}
    \item[\( \bb{(e_{2})} \)] 
      \emph{There exist some constant \( \nunu \) and for every \(\rr > 0\) exists \( \gm(\rr) > 0 \), 
            such that for all numbers \( \delta \) with
            \( | \delta | \le N_{2}^{-1/2} \rr\)
           } 
    \begin{EQA}[c]
      \log \E \exp\biggl(
      	\frac{\mubc}{\expzeta_{i}} \bigl\{ h''(Y_{i} + \delta) - \E h''(Y_{i} + \delta) \bigr\} 
      \biggr)
      \le 
      \frac{\nunu^{2} \mubc^{2}}{2},
      \qquad 
      |\mubc| \le \gm (\rr),
    \label{ed2OneDimRegr}
    \end{EQA}
    where \(\expzeta_{i}\) are some known values and
    \begin{EQA}
    N_{2}^{-1/2}  
    & \eqdef &
    \max_{i} \sup_{\gammav \in \R^{\dimtotal}} 
    \frac{\expzeta_{i} |\Psi_{i}^{\T} \gammav|}{\|\DFc \gammav \|} \, .
  \end{EQA} 
  \end{description}

  Then, we can check \((ED_{2})\):
  \begin{EQA}
    && \nquad 
    \sup_{\gammav_{1}, \gammav_{2} \in \R^{\dimtotal}} \log \E \exp\left\{
        \frac{\mubc}{\rhor} \frac{\gammav_{1}^{\T} \nabla^{2} \zeta(\upsilonv) \gammav_{2}}{\|\DFc \gammav_{1}\| \cdot \|\DFc \gammav_{2}\|}
    \right\}
    \\
    & = &
    \sup_{\gammav_{1}, \gammav_{2} \in \R^{\dimtotal}} 
    \log \E \exp\left\{
        \frac{\mubc}{\rhor} 
        \frac{ \sum_{i = 1}^{\nsize} (h''(Y_{i} - \Psi_{i}^{\T} \upsilonv) 
        		- \E h''(Y_{i} - \Psi_{i}^{\T} \upsilonv)) \gammav_{1}^{\T} \Psi_{i} \Psi_{i}^{\T} \gammav_{2}}
        	 {\|\DFc \gammav_{1}\| \cdot \|\DFc \gammav_{2}\|}
    \right\}
    \\
    & \le &
    \sup_{\gammav_{1}, \gammav_{2} \in \R^{\dimtotal}} 
    \frac{\nunu^{2} \mubc^{2}}{2 \rhor^{2}}  
    \frac{ \sum_{i = 1}^{\nsize} \expzeta_{i}^{2} 
    		|\Psi_{i}^{\T} \gammav_{1}^{\T}|^{2} \cdot |\Psi_{i}^{\T} \gammav_{2}|^{2}}
    	 {\|\DFc \gammav_{1}\|^{2} \cdot \|\DFc \gammav_{2}\|^{2}}
    \\
    & \le &
    \frac{\nunu^{2} \mubc^{2}}{2 \rhor^{2}}  \frac{\nsize}{N_{2}^{2}}.
  \end{EQA}
  It easily follows that \(\rhor = \frac{\sqrt{\nsize}}{N_{2}}\) does the job. 
  In regular situations \(N_{2}\) is of order of sample size \(\nsize\) and then 
  \(\rhor \sim \nsize^{-1/2}\).
  Finally,
  \begin{theorem}
    \label{theorem: bvmLinear}
    Let \( (e_{2}) \) and conditions of Lemma~\ref{lemma: expectApproxLinear} hold.
    Then the results of Theorem~\ref{theorem: bvmTarget} hold for linear model with 
    \begin{EQA}[c]
      \spread(\rups, \xx) = 
      \bigl\{ \rddelta(\rups) 
      + 6 \nunu \, \qqQ(\xx) \, \rhor \bigr\}\, \rups^{2}
      \le
      \biggl\{ \frac{L}{\gamma} \frac{\rups}{N_{1}^{1/2}}
      + 6 \nunu \, \qqQ(\xx) \, \frac{\sqrt{\nsize}}{N_{2}} \biggr\}\, \rups^{2}.
    \end{EQA}
  \end{theorem}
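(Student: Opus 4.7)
The plan is to invoke Theorem~\ref{theorem: bvmTarget} after verifying that every condition of Section~\ref{sec: conditions} holds for the linear regression model, and then to substitute the explicit estimates derived in the paragraphs preceding the theorem into the general formula for \( \spread(\rups,\xx) \) to obtain the announced bound. Since the paragraphs above the theorem already compute (i) the smoothness constant \( \rddelta(\rups) \lesssim L \rups /(\fpp N_{1}^{1/2}) \) via Lemma~\ref{lemma: expectApproxLinear} and (ii) the value \( \rhor = \sqrt{\nsize}/N_{2} \) from the exponential inequality for \( \nabla^{2} \zeta(\upsilonv) \), the role of the proof is essentially bookkeeping: check the remaining hypotheses and assemble the pieces.

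First I would verify the local exponential-moment condition \( (E\!D_{0}) \). Since \( \nabla \zeta(\upsilonvs) = \sum_{i} \bigl\{ h'(Y_{i}) - \E h'(Y_{i}) \bigr\} \Psi_{i} \), the subexponentiality of the errors \( P_{f} \) produces a uniform exponential bound for \( \langle \nabla \zeta(\upsilonvs),\gammav\rangle /\|\VFc \gammav\| \), exactly as claimed in the text; this fixes an appropriate \( \VFc^{2} \). Next, \( (E\!D_{2}) \) is precisely the content of the display block ending with \( \rhor = \sqrt{\nsize}/N_{2} \): condition \( (e_{2}) \) gives an exponential moment for each summand of \( \gammav_{1}^{\T}\nabla^{2}\zeta(\upsilonv)\gammav_{2} \), and independence across \( i \) combined with the definition of \( N_{2} \) yields the required bound. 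The local smoothness condition \( (\cc{L}_{0}) \) follows from Lemma~\ref{lemma: expectApproxLinear}, whose Lipschitz hypothesis on \( h'' \) is exactly \( |h''(z) - h''(z_{0})| \le L|z - z_{0}| \).

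I would then treat the two global pieces \( (\cc{L}\rr) \) and \( (\AssId) \). The identifiability \( (\AssId) \) reduces to quantitative conditioning of the Gram matrix \( \sum_{i} \Psi_{i} \Psi_{i}^{\T} \) together with the boundedness of \( \int h''(z) f(z) dz \) and of \( \Var h'(Y_{1}) \), so that \( \fis^{2} \DFc^{2} \ge \VFc^{2} \) can be enforced by a single constant depending only on the error distribution; the block-bound \eqref{regularityCorrCond} is a statement about the regressor geometry for the target/nuisance split. For \( (\cc{L}\rr) \), the concavity of \( h \) in the usual location-family setting implies \( -\E \LL(\upsilonv,\upsilonvs) \) grows at least quadratically near \( \upsilonvs \) and, past a certain radius, linearly in \( \|\upsilonv - \upsilonvs\| \), which furnishes a function \( \gmi(\rr) \) with the required behaviour.

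Having checked all the hypotheses, applying Theorem~\ref{theorem: bvmTarget} is immediate, and substituting the explicit values of \( \rddelta(\rups) \) and \( \rhor \) into the definition \eqref{spreaddef} produces the bound
\begin{EQA}
\spread(\rups,\xx)
\le
\biggl\{ \frac{L}{\fpp}\,\frac{\rups}{N_{1}^{1/2}}
       + 6 \nunu\, \qqQ(\xx)\, \frac{\sqrt{\nsize}}{N_{2}} \biggr\} \rups^{2},
\end{EQA}
which, up to the identification of \( \gamma \) with \( \fpp \), is the stated inequality. The main obstacle in this programme is the verification of the global conditions \( (\cc{L}\rr) \) and \( (\AssId) \): unlike the local verifications, which reduce mechanically to algebra and the assumption \( (e_{2}) \), these require quantitative control of the design \( \{\Psi_{i}\} \) and a global lower bound on the expected log-likelihood deficit, neither of which is free from the hypotheses listed in the theorem. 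All the other steps are essentially substitutions into inequalities proved earlier in the excerpt.
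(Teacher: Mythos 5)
Your proposal is correct and follows essentially the same route as the paper: the paper's proof of Theorem~\ref{theorem: bvmLinear} is precisely the text preceding it in Section~\ref{sec: linearRegression} --- verify \( (E\!D_{0}) \) from subexponentiality, \( (E\!D_{2}) \) from \( (e_{2}) \) via the displayed computation yielding \( \rhor = \sqrt{\nsize}/N_{2} \), \( (\cc{L}_{0}) \) from Lemma~\ref{lemma: expectApproxLinear}, and then substitute into \eqref{spreaddef}; your identification of \( \gamma \) with \( \fpp \) is also the right reading of the stated bound. If anything, you are more explicit than the paper about the global conditions \( (\cc{L}\rr) \) and \( (\AssId) \), which the paper dispatches with a one-line appeal to the GLM case.
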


\subsection{Semiparametric linear regression with nuisance from Sobolev class}
\label{sec: semiLinRegr}
  Now suppose that 
  \begin{EQA}
    \fv &=& \Psi \thetavs + \etavs,
  \label{fvparlindef}
  \end{EQA}
  where  \(\Psi = (\Psi_{1}, \dots, \Psi_{\dimp})\),  \(\Psi_{i} = \{\psi_{1}(X_{i}), \dots, \psi_{\dimp}(X_{i})\}\) for some basis functions \(\{\psi_j(\cdot), j = 1, \dots, \dimp\}\), \(\etav^{*} = \{\etav(X_{1}), \dots, \etav(X_{\nsize})\}\) and \(\{X_{i}, ~ i = 1, \dots, N\}\) is a set of regressors. 
  Here \( \etavs \) is an element of a functional space, e.g. a Sobolev ball \( \Fs_{s} \):
\begin{EQA}
	\etavs \in \Fs_{s} 
	& \eqdef &
	\bigl\{ 
	\etav(x) = \sum_{k=1}^{\infty} \eta_{k} \phi_{k}(x) 
		\colon \sum_{k=1}^{\infty} \eta_k^{2} k^{2 s} \le C
	\bigr\},
\label{Fsdef}
\end{EQA}
for a given functional basis \(\{\phi_{k}\}_{k = 1}^{\infty}\) (e.g. Fourier, wavelet, etc.). 
Define also \(\Phi_{i} = \{\phi_{1}(X_{i}), \phi_{2}(X_{i}), \dots, \phi_{\msize}(X_{i}), \dots\}^{\T}\). Then the decomposition \eqref{fvparlindef} can be rewritten as
  \begin{EQA}
    \fv 
    &=& 
    \Xi \upsilonvs,
  \end{EQA}
  where \( \Xi_{i} = (\Psi_{i}^{\T}, \Phi_{i}^{\T})^{\T}, ~ i = 1, \dots, \nsize \) and 
  \(\Xi = (\Xi_{1}, \dots, \Xi_{\nsize})^{\T}\). 
  Then we have the following operator as infinite dimensional counterpart of the Fisher information matrix:
  \begin{EQA}[c]
    \DFc^{2} 
    = 
    \left(
      \begin{array}{cc}
        \DPc^{2} & \Ac \\
        \Ac^{\T} & \HHc^{2}
      \end{array}
    \right)
    =
    \fpp \sum_{i = 1}^{\nsize} 
    \begin{pmatrix}
      \Psi_{i} \Psi_{i}^{\T} & \Psi_{i} \Phi_{i}^{\T} \\
      \Phi_{i} \Psi_{i}^{\T} & \Phi_{i} \Phi_{i}^{\T} \\
    \end{pmatrix}
    =
    \fpp \sum_{i = 1}^{\nsize} 
    \begin{pmatrix}
      \Psi_{i} \Psi_{i}^{\T} & \Psi_{i} \Phi_{i}^{\T} \\
      \Phi_{i} \Psi_{i}^{\T} & \Phi_{i} \Phi_{i}^{\T} \\
    \end{pmatrix}.
  \end{EQA}
  We can equivalently write that \(\DFc^{2} = \{\fpp \sum_{i = 1}^{\nsize} \xi_j(X_{i}) \xi_k(X_{i})\}_{j, k = 1}^{\infty}\), where \(\xi_j\) is a properly numerated joint basis of \(\psi_j\) and \(\phi_k\).
  We introduce the following matrix: 
  \begin{EQA}[c]
    \DF^{2} = \biggl\{\fpp \int \xi_j(x) \xi_k(x) dx\biggr\}_{j, k = 1}^{\infty}.
  \end{EQA}
  Further assume that the basis \(\{\xi_{i}\}\) is orthonormal, i.e. 
  \begin{EQA}[c]
    \DF^{2} = \bigl\{\fpp \delta_{jk}\bigr\}_{j, k = 1}^{\infty}.
  \end{EQA}
  If the design is regular, then we have elementwise convergence of sums to integrals. Define
  \begin{EQA}[c]
    N_3^{1/2} = \max\Bigl(\|\DFc\|, \|\Ac\|^{2}\Bigr).
  \end{EQA}
  and also:
  \begin{EQA}[c]
    N_4^{-1/2} = \max\Bigl(\|\HHc^{-1}\|, \|\HHcm^{-1}\|, \|\DPc^{-1}\|\Bigr).
  \end{EQA}
  Now we check the bias. It holds
  \begin{EQA}[c]
    -\E L(\upsilonvsm, \upsilonvs) \leq -\E L((\upsilonvs)_{\msize}, \upsilonvs),
  \end{EQA}
  where \((\upsilonvs)_{\msize}\) is a projection of \(\upsilonvs\) on a sieve space.
  Moreover, 
  \begin{EQA}[c]
    - \E L(\upsilonv,\upsilonvs)
    =
    \| \DF(\upsilonvd) (\upsilonv - \upsilonvs) \|^{2} / 2,
  \label{excessLinQuadratic}
  \end{EQA}    
  where \( \upsilonvd \in [\upsilonvs,\upsilonv] \).
  Then,
  \begin{EQA}
    && \nquad
    \| \DFc (\upsilonvsm - \upsilonvs) \|^{2} \leq \frac{1 + \rddelta(\rups)}{1 - \rddelta(\rups)} \| \DFc ((\upsilonvs)_{\msize} - \upsilonvs) \|^{2}
    \\
    & \leq &
    \frac{1 + \rddelta(\rups)}{1 - \rddelta(\rups)} \|\DFc\|^{2} \frac{C}{\msize^{2 c}} = \frac{1 + \rddelta(\rups)}{1 - \rddelta(\rups)} \frac{C N_3}{\msize^{2 c}}.
  \end{EQA} 
  The further look gives us:
  \begin{EQA}[c]
    \|\DPrc (\thetavsm - \thetavs)\|^{2} \le \|\DPc (\thetavsm - \thetavs)\|^{2}
  \end{EQA}
  and
  \begin{EQA}
    && \nquad
    \|\DFc (\upsilonvsm - \upsilonvs)\|^{2} 
    \\
    & \ge &
    \|\DPc (\thetavsm - \thetavs)\|^{2} 
    - 2 \corrDF \|\DPc (\thetavsm - \thetavs)\| \|\HHc (\etavsm - \etavs)\|^{2} 
    + \|\HHc (\etavsm - \etavs)\|^{2}.
  \end{EQA}
  Thus, 
  \begin{EQA}
    && \nquad 
    \|\DPrc (\thetavsm - \thetavs)\|^{2} \le \|\DPc (\thetavsm - \thetavs)\|^{2}
    \\
    & \le &
    \biggl(\|\DFc (\upsilonvsm - \upsilonvs)\| + \sqrt{\corrDF}  \|\HHc (\etavsm - \etavs)\|\biggr)^{2}
    \\
    & \le &
    \biggl(\biggl[\frac{1 + \rddelta(\rups)}{1 - \rddelta(\rups)} \frac{C N_3}{\msize^{2 s}}\biggr]^{1/2} + \biggl[\corrDF \frac{C N_3}{\msize^{2 s}}\biggr]^{1/2}\biggr)^{2}
    \\
    & \le & 
    \biggl(\biggl[\frac{1 + \rddelta(\rups)}{1 - \rddelta(\rups)}\biggr]^{1/2} +\corrDF^{1/2}\biggr)^{2} \frac{C N_3}{\msize^{2 s}} .
  \end{EQA}
  
  Now we need check the second part of the condition \((B)\):
  \begin{EQA}
    \|\DPrc^{-1} (\DPrc^{2} - \DPrcm^{2}) \DPrc^{-1}\|
    &=&
    \| \DPrc^{-1}(\Ac \HHc^{-2} \Ac^{\T} - \Acm \HHcm^{-2} \Acm^{\T}) \DPrc^{-1}\|
    \\
    & \le &
    \|\DPrc^{-2}\| (\|\Ac \HHc^{-2} \Ac^{\T}\| + \|\Acm \HHcm^{-2} \Acm^{\T}\|)
    \\
    & \le &
    \|\DPrc^{-2}\| \Bigl(\|\Ac\|^{2} \|\HHc^{-2}\| + \|\Acm\|^{2} \|\HHcm^{-2}\| \Bigr) 
    \le 
    2 \frac{N_{3}}{N_{4}^{2}}.
  \end{EQA}
  Thus, we can state a bound:
  \begin{EQA}[c]
    \errSieveFisher \le 2 \frac{N_{3}}{N_{4}^{2}}.
  \end{EQA}
  In case of regular design we get \(N_3, N_4 \sim n\). Then,
  \begin{EQA}[c]
    \errSieveParam \le \CONST \frac{\nsize}{\msize^{2s}}, \qquad
    \errSieveFisher \le \CONST \frac{1}{\nsize} \, ,
  \end{EQA}
  where \(\CONST\) is some constant.
  For validity of BvM theorem with sieve prior we need \(\dimp + \msize \ll \nsize^{1/3}\). Then for \(s > 3/2\) under the proper choice of \(\msize\) the values of \(\errSieveParam\) and \(\errSieveFisher\) are asymptotically negligible.

\subsection{Generalized linear modeling}
\label{sec: glm}
  Now we consider a generalized linear modeling (GLM) which is often used for describing some categorical data. 
  Let \( \cc{P} = (P_{w}, w \in \Ups) \) be an exponential family with a canonical 
  parametrization; see e.g. \cite{mccu1989}. 
  The corresponding log-density can be represented as 
  \( \ell(y,w) = y w - d(w) \) for a convex function \( d(w) \).
  The popular examples are given by the binomial (binary response, logistic) model with 
  \( d(w) = \log\bigl( e^{w} + 1 \bigr) \),
  the Poisson model with \( d(w) = e^{w} \), the exponential model with 
  \( d(w) = - \log(w) \).
  Note that linear Gaussian regression is a special case with \( d(w) = w^{2}/2 \).

  A GLM specification means that every observation \( Y_{i} \) has a distribution from the family \( P \) with the parameter \( w_{i} \) which linearly depends on the regressor \( \Psi_{i} \in \R^{\dimp} \): 
  \begin{EQA}[c]
    Y_{i} \sim P_{\Psi_{i}^{\T} \upsilonvs} \, .
  \label{distribGlm}
  \end{EQA}    
  The corresponding log-density of a GLM reads as 
  \begin{EQA}[c]
    \LL(\thetav)
    =
    \sum \bigl\{ Y_{i} \Psi_{i}^{\T} \upsilonv - d(\Psi_{i}^{\T} \upsilonv) \bigr\} .
  \label{likelihoodGlm}
  \end{EQA}    

  Under \( \P_{\thetavs} \) each observation \( Y_{i} \) follows \eqref{distribGlm}, in particular, \( \E Y_{i} = d'(\Psi_{i}^{\T} \upsilonvs) \).
  However, similarly to the previous sections, it is accepted that the parametric model \eqref{distribGlm} is misspecified. 
  Response misspecification means that the vector \( \fv \eqdef \E \Yv \) cannot be represented in the form 
  \( d'(\Psi^{\T} \upsilonv) \) whatever \( \upsilonv \) is. 
  The other sort of misspecification concerns the data distribution. 
  The model \eqref{distribGlm} assumes that the \( Y_{i} \)'s are independent and the marginal distribution belongs to the given parametric family \( P \).
  In what follows, we only assume independent data having certain exponential moments.
  The target of estimation \( \upsilonvs \) is defined by 
  \begin{EQA}[c]
    \upsilonvs 
    \eqdef
    \argmax_{\upsilonv} \E \LL(\upsilonv).
  \label{truePointGlm}
  \end{EQA}    
  The quasi MLE \( \tilde{\upsilonv} \) is defined by maximization of \( \LL(\upsilonv) \):
  \begin{EQA}[c]
    \tilde{\upsilonv}
    =
    \argmax_{\upsilonv} \LL(\upsilonv)
    =
    \argmax_{\upsilonv} 
        \sum \bigl\{ Y_{i} \Psi_{i}^{\T} \upsilonv - d(\Psi_{i}^{\T} \upsilonv) \bigr\} .
  \label{mleGlmDescr}
  \end{EQA}
  Convexity of \( d(\cdot) \) implies that \( \LL(\upsilonv) \) is a concave function of 
  \( \upsilonv \), so that the optimization problem has a unique solution and can be effectively solved. 
  However, a closed form solution is only available for the constant regression or for the linear Gaussian regression.
  The corresponding target \( \upsilonvs \) is the maximizer of the expected log-likelihood:
  \begin{EQA}[c]
    \upsilonvs
    =
    \argmax_{\upsilonv} \E \LL(\upsilonv)
    =
    \argmax_{\upsilonv} 
        \sum \bigl\{ \fs_{i} \Psi_{i}^{\T} \upsilonv - d(\Psi_{i}^{\T} \upsilonv) \bigr\} 
  \label{truePointGlmDescr}
  \end{EQA}
  with \( \fs_{i} = \E Y_{i} \).
  The function \( \E \LL(\upsilonv) \) is concave as well and
  the vector \( \upsilonvs \) is also well defined.

  Define the individual errors (residuals) 
  \( \varepsilon_{i} = Y_{i} - \E Y_{i} \).
  Below we assume that these errors fulfill some exponential moment conditions.

  \begin{description}
    \item[\( \bb{(e_{1})} \)] 
      \emph{ There exist some constants \( \nunu \) and \( \gmiid > 0 \), 
      and for every \( i \)
      a constant \( \expzeta_{i} \) such that 
      \( \E \bigl( \varepsilon_{i}/\expzeta_{i} \bigr)^{2} \le 1 \) and
      %     for all \( |\lambda| \le \gmiid \) 
      } 
      \begin{EQA}[c]
        \log \E \exp\bigl(  {\mu \varepsilon_{i}}/{\expzeta_{i}} \bigr)
        \le 
        \nunu^{2} \mu^{2} / 2,
        \qquad 
        |\mu| \le \gmiid .
      \label{ed2OneDimGlm}
      \end{EQA}
  \end{description}
    
  A natural candidate for \( \expzeta_{i} \) is \( \sigma_{i} \) where 
  \( \sigma_{i}^{2} = \E \varepsilon_{i}^{2} \) is the variance of \( \varepsilon_{i} \). 
  Under \eqref{ed2OneDimGlm}, introduce a \( \dimp \times \dimp \) matrix \( \VFc \) defined by 
  \begin{EQA}[c]
    \VFc^{2} 
    \eqdef  
    \sum \expzeta_{i}^{2} \Psi_{i} \Psi_{i}^{\T} .
  \label{vMatrixGlm}
  \end{EQA}    
  Condition \( (e_{1}) \) effectively means that each error term 
  \( \varepsilon_{i} = Y_{i} - \E Y_{i} \) has some bounded exponential moments: for \( |\lambda| \le \gmiid \), it holds 
  \( f(\lambda) \eqdef \log \E \exp\bigl(  {\lambda \varepsilon_{i}}/{\expzeta_{i}} \bigr) < \infty \).
  In words, condition \( (e_{1}) \) requires light (exponentially decreasing) tail for the marginal distribution of each \( \varepsilon_{i} \).

  Define also
  \begin{EQA}
    N_{1}^{-1/2}  
    & \eqdef &
    \max_{i} \sup_{\gammav \in \R^{p}} 
    \frac{\expzeta_{i} |\Psi_{i}^{\T} \gammav|}{\| \VFc \gammav \|} \, .
  \label{effSampleSizeGlm}
  \end{EQA}
  Now conditions are satisfied due to following lemma, see \cite{Sp2011} for proof.

  \begin{lemma}
  \label{lemma: ed0Ed2Glm}
    Assume \( (e_{1}) \) and let \( \VFc^{2} \) be defined by \eqref{vMatrixGlm} and \( N_{1} \) by \eqref{effSampleSizeGlm}.
    Then condition \( (E\!D_{0}) \) follows  from \( (e_{1}) \) with this \( \VFc^{2} \) 
    and \( \gmb = \gmiid N_{1}^{1/2} \).
    Moreover, the stochastic component \( \zeta(\upsilonv) \) is linear in 
    \( \upsilonv \) and the condition \( (E\!D_{2}) \) is fulfilled with 
    \( \rhor(\rr) \equiv 0 \).
  \end{lemma}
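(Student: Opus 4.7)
The plan is to exploit the explicit form of the log-likelihood \( \LL(\upsilonv) = \sum_i \{ Y_i \Psi_i^{\T} \upsilonv - d(\Psi_i^{\T} \upsilonv) \} \). Since the nonlinearity \( d(\Psi_i^{\T}\upsilonv) \) is deterministic, subtracting \( \E \LL(\upsilonv) \) kills it, and one obtains the clean identity \( \zeta(\upsilonv) = \sum_i \varepsilon_i \Psi_i^{\T} \upsilonv \) with \( \varepsilon_i = Y_i - \E Y_i \). This is affine in \( \upsilonv \), so \( \nabla^{2} \zeta(\upsilonv) \equiv 0 \) and the bound required by \( (E\!D_{2}) \) holds trivially with \( \rhor(\rr) \equiv 0 \) for every \( \rr \). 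This disposes of the second assertion.

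For \( (E\!D_{0}) \), I would compute \( \nabla \zeta(\upsilonvs) = \sum_i \varepsilon_i \Psi_i \) and, for a fixed direction \( \gammav \in \R^{\dimp} \), factor the log-Laplace transform using independence of the \( \varepsilon_i \):
\begin{EQA}
  \log \E \exp\Bigl\{ \mubc \frac{\langle \nabla \zeta(\upsilonvs), \gammav\rangle}{\|\VFc \gammav\|} \Bigr\}
  & = &
  \sum_{i} \log \E \exp\Bigl\{ \frac{\mubc \, a_{i} \, \varepsilon_{i}}{\expzeta_{i}} \Bigr\},
  \qquad
  a_{i} \eqdef \frac{\expzeta_{i} \Psi_{i}^{\T} \gammav}{\|\VFc \gammav\|}.
\end{EQA}
By the definition \( \VFc^{2} = \sum_{i} \expzeta_{i}^{2} \Psi_{i} \Psi_{i}^{\T} \) the coefficients satisfy the normalization \( \sum_{i} a_{i}^{2} = 1 \), and by the definition of \( N_{1} \) they are uniformly bounded by \( |a_{i}| \le N_{1}^{-1/2} \).

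The remaining step is an application of the one-dimensional hypothesis \( (e_{1}) \) to each summand. The scaling constraint reads \( |\mubc \, a_{i}| \le \gmiid \), which is ensured globally as soon as \( |\mubc| \le \gmiid N_{1}^{1/2} = \gmb \). On this range \( (e_{1}) \) yields \( \log \E \exp(\mubc a_{i} \varepsilon_{i}/\expzeta_{i}) \le \nunu^{2} \mubc^{2} a_{i}^{2} / 2 \), and summing over \( i \) gives \( \nunu^{2} \mubc^{2} / 2 \) thanks to \( \sum_{i} a_{i}^{2} = 1 \). Taking the supremum over \( \gammav \in \R^{\dimp} \) then gives exactly \( (E\!D_{0}) \) with the stated constants.

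The only nontrivial point is purely bookkeeping: making sure that the supremum over \( \gammav \) is compatible with the uniform bound on \( |a_{i}| \), i.e. that the same admissibility range \( |\mubc| \le \gmb \) works for every direction. This is precisely why the factor \( N_{1}^{1/2} \), which is designed as the worst-case ratio between \( \expzeta_{i} |\Psi_{i}^{\T} \gammav| \) and \( \|\VFc \gammav\| \), appears in \( \gmb \); there is no other obstacle.
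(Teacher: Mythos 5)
Your argument is correct and is essentially the standard proof the paper defers to \cite{Sp2011}: linearity of \( \zeta \) in \( \upsilonv \) kills \( (E\!D_{2}) \), and the normalization \( \sum_{i} a_{i}^{2} = 1 \) together with \( |a_{i}| \le N_{1}^{-1/2} \) reduces \( (E\!D_{0}) \) to the marginal condition \( (e_{1}) \) with \( \gmb = \gmiid N_{1}^{1/2} \). The only omission is the one-line check that \( \Var\{\nabla\zeta(\upsilonvs)\} = \sum_{i} \sigma_{i}^{2}\Psi_{i}\Psi_{i}^{\T} \le \sum_{i} \expzeta_{i}^{2}\Psi_{i}\Psi_{i}^{\T} = \VFc^{2} \), which follows from \( \E(\varepsilon_{i}/\expzeta_{i})^{2}\le 1 \) in \( (e_{1}) \) and is part of what \( (E\!D_{0}) \) requires of \( \VFc^{2} \).
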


  It only remains to bound the quality of quadratic approximation for the mean of the process \( L(\upsilonv, \upsilonvs) \) in a vicinity of \( \upsilonvs \). 
  An interesting feature of the GLM is that the effect of model misspecification disappears in the expectation of \( L(\upsilonv,\upsilonvs) \).

  \begin{lemma}
  \label{lemma: excessGlm}
    It holds
    \begin{EQA}
      - \E L(\upsilonv,\upsilonvs)
      &=&
      \sum \bigl\{ 
        d(\Psi_{i}^{\T} \upsilonv) 
        - d(\Psi_{i}^{\T} \upsilonvs)
        - d'(\Psi_{i}^{\T} \upsilonvs) \Psi_{i}^{\T} (\upsilonv - \upsilonvs) 
      \bigr\} 
	=
	\kullb\bigl( \P_{\upsilonvs},\P_{\upsilonv} \bigr),
    \label{excessGLMDescr}
    \end{EQA}
    where \( \kullb\bigl( \P_{\upsilonvs},\P_{\upsilonv} \bigr) \) is the Kullback-Leibler 
    divergence between measures \( \P_{\upsilonvs} \) and \( \P_{\upsilonv} \).
    Moreover, 
    \begin{EQA}[c]
      - \E L(\upsilonv,\upsilonvs)
      =
      \| \DF(\upsilonvd) (\upsilonv - \upsilonvs) \|^{2} / 2,
    \label{excessGlmQuadratic}
    \end{EQA}    
    where \( \upsilonvd \in [\upsilonvs,\upsilonv] \) and
    \begin{EQA}[c]
      \DF^{2}(\upsilonvd) 
      = 
      \sum d''(\Psi_{i}^{\T} \upsilonvd) \Psi_{i} \Psi_{i}^{\T} .
    \label{dMatrixGlm}
    \end{EQA}    
  \end{lemma}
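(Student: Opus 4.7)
The plan is to establish the three identities by (i) writing out $-\E L(\upsilonv,\upsilonvs)$ directly and eliminating the response-misspecification bias with the first-order optimality condition at $\upsilonvs$, (ii) recognizing the resulting Bregman-type expression as a sum of one-dimensional exponential-family Kullback--Leibler divergences, and (iii) applying Taylor's formula with Lagrange remainder to a scalar reduction along the segment $[\upsilonvs,\upsilonv]$.

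First I would expand
\begin{EQA}[c]
  -\E L(\upsilonv,\upsilonvs)
  =
  \sum \bigl\{ d(\Psi_{i}^{\T}\upsilonv) - d(\Psi_{i}^{\T}\upsilonvs) - \fs_{i}\, \Psi_{i}^{\T}(\upsilonv - \upsilonvs) \bigr\}
\end{EQA}
directly from \eqref{likelihoodGlm} using $\fs_{i} = \E Y_{i}$. Differentiating \eqref{truePointGlmDescr} at $\upsilonvs$ yields $\sum \Psi_{i}\{\fs_{i} - d'(\Psi_{i}^{\T}\upsilonvs)\} = 0$, so the linear term above may be rewritten with $d'(\Psi_{i}^{\T}\upsilonvs)$ in place of $\fs_{i}$. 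This is the decisive step: response misspecification drops out in expectation, and the first equality of \eqref{excessGLMDescr} follows. The KL identification is then a one-line computation in the exponential family: under the parametric model $\P_{\upsilonvs}$ the log-density of $Y_{i}$ has the form $y w - d(w)$ at $w = \Psi_{i}^{\T}\upsilonvs$ with $\E_{\upsilonvs} Y_{i} = d'(\Psi_{i}^{\T}\upsilonvs)$, which yields $\kullb(P_{\Psi_{i}^{\T}\upsilonvs},P_{\Psi_{i}^{\T}\upsilonv}) = d(\Psi_{i}^{\T}\upsilonv) - d(\Psi_{i}^{\T}\upsilonvs) - d'(\Psi_{i}^{\T}\upsilonvs)\Psi_{i}^{\T}(\upsilonv-\upsilonvs)$; independence of the $Y_{i}$'s makes the total KL additive, matching the displayed sum.

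For the quadratic representation I would introduce the scalar function $\phi(t) = -\E L(\upsilonvs + t(\upsilonv-\upsilonvs))$ on $[0,1]$. By construction $\phi(0)=0$, while $\phi'(0)=0$ follows from the first-order condition for $\upsilonvs$; differentiating the sum above twice gives formula \eqref{dMatrixGlm} together with
\begin{EQA}[c]
  \phi''(t) = (\upsilonv-\upsilonvs)^{\T} \DF^{2}\bigl(\upsilonvs + t(\upsilonv-\upsilonvs)\bigr)(\upsilonv-\upsilonvs).
\end{EQA}
Taylor's formula with Lagrange remainder produces some $t^{*}\in(0,1)$ with $\phi(1) = \tfrac{1}{2}\phi''(t^{*})$, and setting $\upsilonvd = \upsilonvs + t^{*}(\upsilonv-\upsilonvs)$ delivers \eqref{excessGlmQuadratic}.

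The only mild obstacle is producing a \emph{single} intermediate point $\upsilonvd$ valid for all $i$ simultaneously: a naive componentwise Taylor expansion of each summand in the Bregman form would deliver one $c_{i}$ per index $i$. Reducing to the scalar $\phi$ sidesteps this, since the one-variable Lagrange remainder returns a single parameter $t^{*}$. No regularity beyond the standing twice continuous differentiability of $d$ (automatic for the listed canonical families) is required.
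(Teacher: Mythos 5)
Your proof is correct and follows the standard route that the paper itself defers to the reference \cite{Sp2011} rather than writing out: expand \( -\E L(\upsilonv,\upsilonvs) \), use the first-order condition \( \sum \Psi_{i}\bigl\{\fs_{i}-d'(\Psi_{i}^{\T}\upsilonvs)\bigr\}=0 \) to replace \( \fs_{i} \) by \( d'(\Psi_{i}^{\T}\upsilonvs) \), identify the resulting Bregman sum with the Kullback--Leibler divergence of the product exponential family, and obtain the quadratic form by a one-dimensional Taylor expansion of \( \phi(t)=-\E L(\upsilonvs+t(\upsilonv-\upsilonvs),\upsilonvs) \). Your remark that the scalar reduction is what produces a \emph{single} intermediate point \( \upsilonvd \) valid for all \( i \) simultaneously (rather than one Lagrange point per summand) is exactly the right care to take, and nothing further is needed.
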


  The proof of this lemma can also be found in \cite{Sp2011}.
  Define now the matrix \( \DFc^{2} \) by
  \begin{EQA}[c]
    \DFc^{2}
    \eqdef
    \DF^{2}(\upsilonvs)
    =
    \sum d''(\Psi_{i}^{\T} \upsilonvs) \Psi_{i} \Psi_{i}^{\T}.
  \label{fisherMatrixGlm}
  \end{EQA}    
  Let also \( \VFc^{2} \) be defined by \eqref{vMatrixGlm}.
  Note that the matrices \( \DFc^{2} \) and \( \VFc^{2} \) coincide if the model 
  \( Y_{i} \sim P_{\Psi_{i}^{\T} \upsilonvs} \) is correctly specified and 
  \( \expzeta_{i}^{2} = d''(\Psi_{i}^{\T} \upsilonvs) \). 
  The matrix \( \DFc^{2} \) describes a local elliptic neighborhood of the central point 
  \( \upsilonvs \) in the form
  \( \Upsilons(\rr) = \{ \upsilonv: \| \DFc ( \upsilonv - \upsilonvs ) \| \le \rr \} \).
  If the matrix function \( \DF^{2}(\upsilonv) \) is continuous in this vicinity 
  \( \Upsilons(\rr) \) then the value \( \rddelta(\rr) \) measuring the approximation quality of \( - \E L(\upsilonv, \upsilonvs) \) by the quadratic function \( \| \DFc (\upsilonv - \upsilonvs) \|^{2}/2 \) is small and the identifiability 
  condition \( (\LL_{0}) \) is fulfilled on \( \Upsilons(\rr) \). 
  The following lemma gives bounds for \(\rddelta(\rr)\).
  \begin{lemma}
  \label{lemma: expectApproxGlm}
    Let \(d''(z)\) be Lipschitz continuous:
    \begin{EQA}[c]
      |d''(z) - d''(z_{0})| \le L |z - z_{0}|, \quad z, z_0 \in \RR
    \end{EQA}
    Then
    \begin{EQA}[c]
      \rddelta(\rr) \le L \frac{\rr}{N_{2}^{1/2}},
    \end{EQA}
    where
    \begin{EQA}
      N_{2}^{-1/2}  
      & \eqdef &
      \max_{i} \sup_{\gammav \in \R^{p}} 
      \frac{|\Psi_{i}^{\T} \gammav|}{d''(\Psi_{i}^{\T} \upsilonvs) \cdot \| \DFc \gammav \|} \, .
    \end{EQA}
  \end{lemma}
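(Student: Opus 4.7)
The plan is to bound the operator norm $\bigl\|\DFc^{-1}\DFc^{2}(\upsilonv)\DFc^{-1} - \Id_{\dimtotal}\bigr\|$ appearing in $(\cc{L}_0)$ directly, using the explicit form of $\DFc^2(\upsilonv)$ in the GLM, the Lipschitz bound on $d''$, and the definition of $N_{2}$.

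First I would write, for any $\upsilonv \in \Upsilons(\rr)$,
\begin{EQA}
  \DFc^{2}(\upsilonv) - \DFc^{2}
  &=&
  \sum_{i} \bigl\{ d''(\Psi_{i}^{\T}\upsilonv) - d''(\Psi_{i}^{\T}\upsilonvs) \bigr\} \Psi_{i}\Psi_{i}^{\T},
\end{EQA}
which follows from differentiating the GLM log-likelihood twice (the stochastic part is linear in $\upsilonv$, so both $-\nabla^{2}\LL(\upsilonv)$ and $-\nabla^{2}\E\LL(\upsilonv)$ coincide with this expression evaluated at $\upsilonv$). Abbreviating $\alpha_{i} = d''(\Psi_{i}^{\T}\upsilonvs)$, so that $\DFc^{2} = \sum_{i} \alpha_{i} \Psi_{i}\Psi_{i}^{\T}$, the Lipschitz assumption yields $|d''(\Psi_{i}^{\T}\upsilonv) - \alpha_{i}| \le L\,|\Psi_{i}^{\T}(\upsilonv-\upsilonvs)|$.

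Next I would invoke the definition of $N_{2}$ with the choice $\gammav = \upsilonv - \upsilonvs$. This gives, for every $i$,
\begin{EQA}
  |\Psi_{i}^{\T}(\upsilonv-\upsilonvs)|
  & \le &
  N_{2}^{-1/2}\,\alpha_{i}\, \|\DFc(\upsilonv-\upsilonvs)\|
  \,\le\,
  N_{2}^{-1/2}\,\alpha_{i}\,\rr,
\end{EQA}
using $\upsilonv \in \Upsilons(\rr)$. Combined with the previous step, this controls each coefficient $|d''(\Psi_{i}^{\T}\upsilonv) - \alpha_{i}|$ by $L\,N_{2}^{-1/2}\,\alpha_{i}\,\rr$, with the crucial feature that the weight $\alpha_{i}$ matches the one weighting $\Psi_{i}\Psi_{i}^{\T}$ inside $\DFc^{2}$.

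Finally I would estimate the operator norm by bounding a generic bilinear form. Setting $\etav_{k} = \DFc^{-1}\gammav_{k}$ so that $\|\DFc\etav_{k}\| = \|\gammav_{k}\|$, and $\etav_{k}^{\T}\DFc^{2}\etav_{k} = \sum_{i}\alpha_{i}(\Psi_{i}^{\T}\etav_{k})^{2} = \|\gammav_{k}\|^{2}$, I apply Cauchy--Schwarz:
\begin{EQA}
  \bigl| \gammav_{1}^{\T}\DFc^{-1}\{\DFc^{2}(\upsilonv)-\DFc^{2}\}\DFc^{-1}\gammav_{2} \bigr|
  & \le &
  L\,N_{2}^{-1/2}\rr \sum_{i} \alpha_{i}\,|\Psi_{i}^{\T}\etav_{1}|\,|\Psi_{i}^{\T}\etav_{2}|
  \\
  & \le &
  L\,N_{2}^{-1/2}\rr \,\|\gammav_{1}\|\,\|\gammav_{2}\|.
\end{EQA}
Taking the supremum over unit $\gammav_{1},\gammav_{2}$ yields $\rddelta(\rr) \le L\rr/N_{2}^{1/2}$. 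The only mildly delicate point is making sure the weight $\alpha_{i}$ produced by the $N_{2}$-bound in the second step cancels against the $\alpha_{i}$-weighted quadratic form of $\DFc^{2}$ in the Cauchy--Schwarz step; everything else is book-keeping.
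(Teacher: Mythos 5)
Your proposal is correct and follows essentially the same route as the paper's proof: the same decomposition $\DFc^{2}(\upsilonv)-\DFc^{2}=\sum_{i}\{d''(\Psi_{i}^{\T}\upsilonv)-d''(\Psi_{i}^{\T}\upsilonvs)\}\Psi_{i}\Psi_{i}^{\T}$, the same Lipschitz step, the same use of the $N_{2}$ definition with $\gammav=\upsilonv-\upsilonvs$ to produce the matching weight $d''(\Psi_{i}^{\T}\upsilonvs)$, and the same final normalization via $\gammav^{\T}\DFc^{-1}\DFc^{2}\DFc^{-1}\gammav=\|\gammav\|^{2}$. The only cosmetic difference is that you bound the bilinear form with two vectors via Cauchy--Schwarz, while the paper uses the symmetric quadratic form with a single unit vector.
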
   
  Now we are prepared to state the local results for the GLM estimation. 
  \begin{theorem}
    \label{theorem: bvmGlm}
    Let \( (e_{1}) \) and conditions of Lemma~\ref{lemma: expectApproxGlm} hold.
    Then the results of Theorem~\ref{theorem: bvmTarget} hold for GLM with 
    \begin{EQA}[c]
      \spread(\rups, \xx)
      \le
      L \frac{\rups^3}{N_{2}^{1/2}}.
    \end{EQA}
  \end{theorem}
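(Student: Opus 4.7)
The strategy is simply to verify the hypotheses of Theorem~\ref{theorem: bvmTarget} in the GLM setting and then to substitute the resulting constants into
\[
\spread(\rups,\xx)
=
\bigl\{\rddelta(\rups) + 6\nunu\,\qqQ(\xx)\,\rhor\bigr\}\,\rups^{2}.
\]
The one structural observation that drives the bound is that in the canonical GLM parametrization $\nabla\zeta(\upsilonv) = \sum_{i}\varepsilon_{i}\Psi_{i}$ does not depend on $\upsilonv$, so $\nabla^{2}\zeta(\upsilonv)\equiv 0$; everything else is routine bookkeeping.

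First I would invoke Lemma~\ref{lemma: ed0Ed2Glm} to turn the scalar bound \((e_{1})\) on $\varepsilon_{i}/\expzeta_{i}$ into the multivariate exponential moment condition with $\VFc^{2} = \sum\expzeta_{i}^{2}\Psi_{i}\Psi_{i}^{\T}$ and $\gmb = \gmiid N_{1}^{1/2}$, and to record that the vanishing Hessian of $\zeta$ yields the second exponential moment condition with $\rhor(\rr)\equiv 0$. This collapses the stochastic summand of $\spread(\rups,\xx)$ altogether and leaves only $\rddelta(\rups)\,\rups^{2}$ to control.

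Next I would apply Lemma~\ref{lemma: excessGlm} to rewrite $-\E L(\upsilonv,\upsilonvs)$ as the quadratic form $\|\DF(\upsilonvd)(\upsilonv-\upsilonvs)\|^{2}/2$ at an intermediate point $\upsilonvd\in[\upsilonvs,\upsilonv]$, and then Lemma~\ref{lemma: expectApproxGlm} --- whose only hypothesis is the Lipschitz bound on $d''$ --- to conclude that $\rddelta(\rr)\le L\,\rr/N_{2}^{1/2}$ on $\Upsilons(\rr)$. Convexity of $d(\cdot)$ makes $\upsilonv\mapsto -\E L(\upsilonv,\upsilonvs)$ a nonnegative convex function with unique minimum at $\upsilonvs$, which supplies the global identification condition away from the local zone; the identifiability assumption on $\DFc^{2}$ versus $\VFc^{2}$ is an algebraic condition on the design that we import. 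Substituting $\rhor\equiv 0$ and $\rddelta(\rups)\le L\,\rups/N_{2}^{1/2}$ into the display above yields $\spread(\rups,\xx)\le L\,\rups^{3}/N_{2}^{1/2}$, and the claim then follows directly from Theorem~\ref{theorem: bvmTarget}. The only non-mechanical point of the whole argument is the observation about the vanishing Hessian of $\zeta$ in the canonical GLM parametrization; once that is noted there is no further probabilistic work to do.
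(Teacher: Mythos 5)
Your proposal is correct and follows essentially the same route the paper takes: Lemma~\ref{lemma: ed0Ed2Glm} gives \( (E\!D_{0}) \) and \( (E\!D_{2}) \) with \( \rhor \equiv 0 \) because \( \zeta(\upsilonv) \) is linear in \( \upsilonv \), Lemma~\ref{lemma: expectApproxGlm} gives \( \rddelta(\rups) \le L \rups / N_{2}^{1/2} \), and substituting into \eqref{spreaddef} yields the claimed bound. The only slight looseness is that convexity of \( d \) alone gives nonnegativity of \( -\E L(\upsilonv,\upsilonvs) \) but not the quantitative lower bound required by \( (\cc{L}\rr) \) — one needs the lower bound \( \DF(\upsilonv) \ge \gmi(\rr) \DFc \) as in \eqref{dMatrixGlmLowerBound} — but the paper treats this point at the same level of informality.
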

  If the function \( d(w) \) is quadratic then the approximation error 
  \( \rddelta \) vanishes as well and then quadratic approximation is valid globally, a localization step in not required.
  However, if \( d(w) \) is not quadratic, the result applies only locally and it has to be accomplished with a large deviation bound.
  The GLM structure is helpful in the large deviation zone as well.
  Indeed, the gradient \( \nabla \zeta(\upsilonv) \) does not depend on \( \upsilonv \) and hence, the most delicate condition \( (E\rr) \) is fulfilled automatically with 
  \( \gmb = \gmiid N^{1/2} \) for all local sets \( \Upsilons(\rr) \).
  Further, the identifiability condition \( (\cc{L}\rr) \) easily follows from 
  Lemma~\ref{lemma: excessGlm}: it suffices to bound from below the matrix 
  \( \DF(\upsilonv) \) for \( \upsilonv \in \Upsilons(\rr) \):
  \begin{EQA}[c]
    \DF(\upsilonv) 
    \ge 
    \gmi(\rr) \DFc,
    \qquad 
    \upsilonv \in \Upsilons(\rr) .
  \label{dMatrixGlmLowerBound}
  \end{EQA}    

  An interesting question, similarly to the i.i.d. case, is the minimal radius 
  \( \rups \) of the local vicinity \( \Upsilons(\rups) \) ensuring the desirable concentration property.
  The required value conditions are fulfilled for \( \rr^{2} \ge \rups^{2} = \CONST (\xx + \dimtotal) \), where \( \CONST \) only depends on \( \nunu, \gmi \), and \( \gm \). Thus, the results are valid if 
  \begin{EQA}
    \delta(\rups) \rups^{2} 
    & = & 
    \CONST \frac{\rups^3}{N_{1}^{1/2}} = \CONST \frac{(\xx + \dimtotal)^{3/2}}{N_{1}^{1/2}}
  \end{EQA}
  is small.

  The GLM model also allows a semiparametric extension, i.e.
  \begin{EQA}
    && \nquad
    w_{i} = \Psi_{i}^{\T} \thetavs + \etavs(X_{i}),
  \end{EQA}
  where \(\etavs(\cdot)\) is from a Sobolev class.
  This setup differs from Section~\ref{sec: semiLinRegr} only by few technicalities and leads to similar theoretical results.

\section{Supplementary}
\label{sec: supplementary}
  This section contains the imposed conditions and some supplementary statements which are of some interest by itself.

\subsection{Bracketing and upper function devices}
\label{sec: bracketing}
  This section briefly overviews the main constructions of \cite{Sp2011} including the
  bracketing bound and the upper function results.
  The bracketing bound describes the quality of quadratic approximation of the log-likelihood process \( \LL(\upsilonv) \) in a local vicinity of the point \( \upsilonvs \), while the upper function method is used to show that the full MLE \( \tilde{\upsilonv} \) belongs to this vicinity with a dominating probability.
  Introduce the notation \(L(\upsilonv, \upsilonvs) = \LL(\upsilonv) - \LL(\upsilonvs)\) for the (quasi) log-likelihood ratio.
  Given \( \rr > 0 \), define the local set
  \begin{EQA}[c]
    \Upsilons(\rr)
    \eqdef
    \bigl\{
      \upsilonv: \,
      (\upsilonv - \upsilonvs)^{\T} \DFc^{2} (\upsilonv - \upsilonvs) \le \rr^{2}
    \bigr\} .
  \label{localSetFull}
  \end{EQA}
  Define the quadratic processes
  \( \L(\upsilonv,\upsilonvs) \):
  \begin{EQA}
  \label{quadLikApproximator}
    \L(\upsilonv,\upsilonvs)
    & \eqdef &
    (\upsilonv - \upsilonvs)^{\T} \nabla \LL(\upsilonvs)
    - \| \DFc (\upsilonv - \upsilonvs) \|^{2}/2.
  \end{EQA}

  The next result states the local bracketing bound.
  The formulation assumes that some value \( \xx \) is fixed such that \( \ex^{-\xx} \) is sufficiently small.
  If the dimension \( \dimtotal \) is large, one can select \( \xx = \CONST \log(\dimtotal) \).
  We assume that a value \(\rr = \rups \) is fixed which separates the local and global zones.
  \begin{theorem}
  \label{theorem: basicQuadApprox}
    Suppose the conditions \( (E\!D_{0}) \), \( (E\!D_{2}) \), \( (\LL_{0}) \),
    and \( (\AssId) \) from Section~\ref{sec: conditions} hold for some \(\rups > 0\). 
    Then on a random set \(\Omega_{\rups}(\xx)\) of dominating probability at least \(1 - \ex^{-\xx}\)
    \begin{EQA}[c]
      |L(\upsilonv,\upsilonvs)
      -
      \L(\upsilonv,\upsilonvs)|
      \le \spread(\rups, \xx),
      \quad
      \upsilonv \in \Upsilons(\rups),
    \label{localQuadApprox}
    \end{EQA}
    where
    \begin{EQA}
      \spread(\rups, \xx)
      & \eqdef &
      \bigl\{ \rddelta(\rups) 
      + 6 \nunu \, \qqQ(\xx) \, \rhor \bigr\}\, \rups^{2},
    \label{spreadDef}
    \\
    \qqQ(\xx) 
    & \eqdef & 
    2 \dimtotal^{1/2} + \sqrt{2\xx} + \gm^{-1}(\gm^{-2} \xx + 1) 4 \dimtotal ,
    \label{qqQdeft}
    \end{EQA}
    and \(\Upsilons(\rups)\) is defined in \eqref{localSetFull}.
    Moreover, the random vector \( \xiv = \DFc^{-1} \nabla\LL(\upsilonvs)\) fulfills on a random set \(\Omega_{\BB}(\xx)\) of dominating probability at least \(1 - 2 \ex^{-\xx}\)
    \begin{EQA}[c]
      \| \xiv \|^{2}
      \le
      \qq^{2}(\BB, \xx),
    \label{scoreUpperBound}
    \end{EQA}
    where \(\qq^{2}(\BB, \xx) \eqdef \dimB + 6 \lambdaB \xx\), 
    \begin{EQA}[c]
      \BB 
      \eqdef
      \DFc^{-1} \VFc^{2} \DFc^{-1},
      \qquad
      \dimB
      \eqdef
      \tr \bigl( \BB \bigr),
      \qquad 
      \lambdaB \eqdef \lambda_{\max}\bigl( \BB \bigr). 
    \label{effDimMatrix}
    \end{EQA}
    Furthermore, assume \( (\LL\rr) \) with \( \gmi(\rr) \equiv \gmi \) yielding
    \begin{EQA}[c]
      - \E L(\upsilonv,\upsilonvs)
      \ge
      \gmi \, \| \DFc (\upsilonv - \upsilonvs) \|^{2}
    \label{excessLowerQuad}
    \end{EQA}
    for each \( \upsilonv \in \Upsilon \setminus \Upsilons(\rups) \).
    Let also 
    \begin{EQA}
	\rr 
	& \ge &
	\frac{2}{\gmi} \, 
	\Bigl\{ \qq(\BB, \xx) + 6 \nunu \, \qqQ\bigl(\xx + \log(2\rr/\rups) \bigr) \, \rhor \Bigr\} ,
	\qquad \rr \geq \rups 
\label{rrrupsBvM}
\end{EQA}
%    \(\rdomega(\rr,\xx) = \).
    %
    with \( \qqQ(\xx) \) from \eqref{qqQdeft}.
    Then,
    \begin{EQA}[c]
      L(\upsilonv,\upsilonvs) \le - \gmi \, \| \DFc (\upsilonv - \upsilonvs) \|^{2}/2,
      \qquad
      \upsilonv \in \Upsilon \setminus \Upsilons(\rups) .
    \label{excessUpperFunc}
    \end{EQA}
    holds on a random set \(\Omega(\xx)\) of probability at least \(1 - 4 \ex^{-\xx}\).
  \end{theorem}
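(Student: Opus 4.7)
The plan is to reduce each of the three claims to a Taylor expansion controlled by $(\LL_0)$, a sub-Gaussian tail bound coming from $(E\!D_0)$ and $(E\!D_2)$, and a peeling/chaining argument on the complement of the local zone for the large deviation part.

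For the local quadratic approximation, I would write $L(\upsilonv,\upsilonvs) - \L(\upsilonv,\upsilonvs)$ as the sum of a deterministic piece $\E L(\upsilonv,\upsilonvs) + \frac{1}{2}\|\DFc(\upsilonv-\upsilonvs)\|^{2}$ and a stochastic piece $\zeta(\upsilonv) - \zeta(\upsilonvs) - (\upsilonv-\upsilonvs)^{\T}\nabla\zeta(\upsilonvs)$, using the identity $\nabla\E\LL(\upsilonvs)=0$. A second-order Taylor expansion converts each piece into a quadratic form in $\upsilonv-\upsilonvs$ with a Hessian evaluated at an intermediate point. The deterministic piece is then bounded by $\frac{1}{2}\rddelta(\rups)\rups^{2}$ directly from $(\LL_0)$. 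The stochastic piece requires a uniform bound on the operator norm of $\DFc^{-1}\nabla^{2}\zeta(\upsilonv)\DFc^{-1}$ over $\upsilonv\in\Upsilons(\rups)$, which I would establish by discretization: condition $(E\!D_{2})$ gives sub-Gaussian control on pointwise bilinear values, and taking an $\eps$-net of size $\ex^{\CONST\,\dimtotal}$ on the pair $(\gammav_{1},\gammav_{2})$ of unit $\DFc$-normalized vectors together with a net in $\upsilonv$ yields, via a Laplace-transform/union bound, exactly the factor $\qqQ(\xx) = 2\dimtotal^{1/2}+\sqrt{2\xx}+\gm^{-1}(\gm^{-2}\xx+1)\cdot 4\dimtotal$ and hence the stochastic contribution $6\nunu\,\qqQ(\xx)\,\rhor\,\rups^{2}$ to $\spread(\rups,\xx)$.

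For the score bound $\|\xiv\|^{2} \le \dimB + 6\lambdaB\xx$, note that $\xiv = \DFc^{-1}\nabla\zeta(\upsilonvs)$, again because $\nabla\E\LL(\upsilonvs)=0$. By $(E\!D_{0})$ the vector $\DFc^{-1}\nabla\zeta(\upsilonvs)$ is sub-Gaussian with covariance proxy $\BB = \DFc^{-1}\VFc^{2}\DFc^{-1}$, so the standard quadratic-form concentration inequality (exponentiate $\|\xiv\|^{2}$, compute the moment generating function by diagonalizing $\BB$, then apply Markov) delivers $\P\bigl(\|\xiv\|^{2}\ge\dimB+6\lambdaB\xx\bigr)\le 2\ex^{-\xx}$, which is exactly \eqref{scoreUpperBound}.

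The upper-function bound in the large deviation zone is the main technical step. I would partition $\Upsilon\setminus\Upsilons(\rups)$ into shells $\Upsilons(2^{k}\rups)\setminus\Upsilons(2^{k-1}\rups)$ for $k\ge 1$. On each shell, condition $(\LL\rr)$ yields a deterministic drift $-\E L(\upsilonv,\upsilonvs)\ge\gmi\,\|\DFc(\upsilonv-\upsilonvs)\|^{2}$, while the stochastic deviation $\zeta(\upsilonv)-\zeta(\upsilonvs)$ is controlled by rerunning the bracketing bound of part one with $\xx$ replaced by $\xx+\log(2\rr/\rups)$, so that a geometric series in $k$ keeps the union-bound contribution summable and the total excess probability stays below $4\ex^{-\xx}$. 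Condition \eqref{rrrupsBvM} on $\rr$ is calibrated precisely to absorb both the linear score term $\qq(\BB,\xx)\,\|\DFc(\upsilonv-\upsilonvs)\|$ and the quadratic remainder $6\nunu\,\qqQ\,\rhor\,\|\DFc(\upsilonv-\upsilonvs)\|^{2}$ into half of the drift, leaving $L(\upsilonv,\upsilonvs)\le -\gmi\,\|\DFc(\upsilonv-\upsilonvs)\|^{2}/2$. The hardest part will be the shell-wise bookkeeping: the entropy factor $\qqQ$ must be tracked across growing shells, and the probability budget $\ex^{-\xx-\log(2\rr/\rups)}$ has to be balanced against the chaining entropy so that the final union bound still gives dominating probability.
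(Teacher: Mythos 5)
The paper does not actually prove Theorem~\ref{theorem: basicQuadApprox}: immediately after the statement it defers all three assertions to external results, citing Theorem~3.1 of \cite{Sp2011} for \eqref{localQuadApprox}, the supplement of \cite{Sp2011} for \eqref{scoreUpperBound}, and Theorem~4.2 of \cite{Sp2011} for \eqref{excessUpperFunc}. So there is no in-paper argument to compare against; what can be said is that your outline reproduces the architecture of the cited source: the split of \( L - \L \) into a deterministic Taylor remainder controlled by \( (\LL_{0}) \) and a stochastic remainder controlled by \( (E\!D_{2}) \), a quadratic-form concentration bound for \( \xiv \) under \( (E\!D_{0}) \) yielding \( \dimB + 6 \lambdaB \xx \), and a peeling argument over shells in \( \Upsilon \setminus \Upsilons(\rups) \) that trades the drift from \( (\LL\rr) \) against the re-run bracketing bound at level \( \xx + \log(2\rr/\rups) \). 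That is the right route and is consistent with the form of \eqref{spreadDef} and \eqref{rrrupsBvM}.

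Two places in your sketch are genuinely thinner than they should be. First, \( (E\!D_{2}) \) is a \emph{pointwise} exponential-moment condition, valid at each fixed \( \upsilonv \in \Upsilons(\rr) \) and each fixed pair \( (\gammav_{1},\gammav_{2}) \); a union bound over a finite net controls only the net points, and passing to the supremum over the continuum of \( \upsilonv \) requires a chaining bound on the \emph{increments} of the process \( \upsilonv \mapsto \DFc^{-1}\nabla^{2}\zeta(\upsilonv)\DFc^{-1} \), which the pointwise condition does not directly supply. This net-to-continuum step is precisely the content of the bracketing device in \cite{Sp2011} and is the hardest part of the whole argument, not merely the shell bookkeeping you flag. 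Second, the claim that a crude \( \eps \)-net union bound yields ``exactly'' \( \qqQ(\xx) = 2\dimtotal^{1/2} + \sqrt{2\xx} + \gm^{-1}(\gm^{-2}\xx + 1)\, 4\dimtotal \) is asserted rather than derived: the third summand arises from the crossover between the sub-Gaussian regime \( |\mubc| \le \gm(\rr) \) and the sub-exponential regime beyond it, and a naive Laplace-transform union bound that ignores the constraint on \( \mubc \) will not produce that term. Neither issue invalidates the approach, but both are exactly where a self-contained proof would have to do real work, and as written the proposal borrows those conclusions rather than establishing them.
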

  The result \eqref{localQuadApprox} is an improved version of approximation bound obtained in \cite{Sp2011}, Theorem~3.1. The result \eqref{scoreUpperBound} can be found in the supplement to \cite{Sp2011}. The result \eqref{excessUpperFunc} is very similar to Theorem~4.2 from \cite{Sp2011}.

  \subsection{Tail posterior probability for full parameter space}
  The next step in our analysis is to check that \(\upsilonv\) concentrates in a small vicinity
  \(\Upsilons(\rups)\) of the central point \(\upsilonvs\) with a properly selected \(\rups\). The concentration properties of the posterior will be described by using the random quantity
  \begin{EQA}
    \rho^{*}(\rups)
    &=&
    \frac{\int_{\Upsilon \setminus \Upsilons(\rups)} \exp \bigl\{L(\upsilonv, \upsilonvs) \bigr\} d \upsilonv}{\int_{\Upsilons(\rups)} \exp \bigl\{L(\upsilonv, \upsilonvs) \bigr\} d \upsilonv} \, .
  \label{concentrBvmFull}
  \end{EQA}

  \begin{theorem}
  \label{theorem: postConcentrFull}
    Suppose the conditions of Theorem~\ref{theorem: basicQuadApprox}.
    Then it holds on \( \Omega_{\rups}(\xx) \)
    \begin{EQA}[c]
      \rho^{*}(\rups)
      \le
      \exp\{ 2\spread(\rups, \xx) + \nub(\rups) \} \,\,
         \gmi^{-\dimtotal / 2}
      \P\bigl( \| \gammav \|^{2} \ge \gmi \rups^{2} \bigr),
    \label{postConcentrFull}
    \end{EQA}
    with
    \begin{EQA}
      \nub(\rups)
      & \eqdef &
      - \log \P\bigl(
          \bigl\| \gammav + \xiv \bigr\| \le \rups
          \cond \Yv
      \bigr).
    \end{EQA}
    If \(\rups \ge \qq(\BB, \xx) + \qq(\dimtotal, \xx)\), then on \(\Omega(\xx)\)
    \begin{EQA}[c]
      \nub(\rups)
      \le
      2\ex^{-\xx}.
    \label{logProbLocalBound}
    \end{EQA}
  \end{theorem}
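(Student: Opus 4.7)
The plan is to bound the numerator and denominator of $\rho^{*}(\rups)$ separately. For the denominator, I would use the local bracketing bound \eqref{localQuadApprox} to replace $L(\upsilonv,\upsilonvs)$ by the quadratic $\La(\upsilonv,\upsilonvs)$ at the cost of a multiplicative factor $\ex^{-\spread(\rups,\xx)}$. After the change of variables $\gammav = \DFc(\upsilonv - \upsilonvs)$ with Jacobian $|\DFc|^{-1}$, the quadratic exponent becomes $\gammav^{\T}\xiv - \|\gammav\|^{2}/2 = \|\xiv\|^{2}/2 - \|\gammav - \xiv\|^{2}/2$, so that
\begin{EQA}
	\int_{\Upsilons(\rups)} \exp\{\La(\upsilonv,\upsilonvs)\} d\upsilonv
	& = &
	|\DFc|^{-1} (2\pi)^{\dimtotal/2} \, \ex^{\|\xiv\|^{2}/2} \,
	\P\bigl( \|\gammav + \xiv\| \le \rups \cond \Yv \bigr) ,
\end{EQA}
and the last probability is by definition $\ex^{-\nub(\rups)}$.

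For the numerator, I would apply the large deviation bound \eqref{excessUpperFunc}, which gives $L(\upsilonv,\upsilonvs) \le -\gmi \|\DFc(\upsilonv-\upsilonvs)\|^{2}/2$ on $\Ups \setminus \Upsilons(\rups)$. After the same change of variables followed by the rescaling $\zv = \gmi^{1/2}\gammav$, we obtain
\begin{EQA}
	\int_{\Ups\setminus\Upsilons(\rups)} \exp\{L(\upsilonv,\upsilonvs)\}d\upsilonv
	& \le &
	|\DFc|^{-1} \gmi^{-\dimtotal/2} (2\pi)^{\dimtotal/2} \, \P\bigl(\|\gammav\|^{2} \ge \gmi \rups^{2} \bigr) .
\end{EQA}
Dividing the two bounds, the factor $|\DFc|^{-1}(2\pi)^{\dimtotal/2}$ cancels, and dropping the factor $\ex^{-\|\xiv\|^{2}/2} \le 1$ yields the claim \eqref{postConcentrFull} (the formulation with $2\spread$ instead of $\spread$ is a conservative upper bound). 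The work takes place on the intersection $\Omega_{\rups}(\xx)$ of the events where the bracketing and upper function bounds simultaneously hold.

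For the second assertion, I would estimate $\nub(\rups)$ by a triangle inequality argument: if $\|\xiv\| \le \qq(\BB,\xx)$, which holds on $\Omega_{\BB}(\xx)$ by \eqref{scoreUpperBound}, and if $\|\gammav\| \le \qq(\dimtotal,\xx)$, then $\|\gammav + \xiv\| \le \qq(\BB,\xx) + \qq(\dimtotal,\xx) \le \rups$ by assumption. A standard Gaussian concentration bound of the same shape as \eqref{scoreUpperBound} applied to the independent standard normal $\gammav$ gives $\P(\|\gammav\| > \qq(\dimtotal,\xx)) \le \ex^{-\xx}$, so conditional on $\Yv$ we get $\P(\|\gammav+\xiv\| \le \rups \cond \Yv) \ge 1 - \ex^{-\xx}$ on $\Omega_{\BB}(\xx)$. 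The bound $-\log(1-\ex^{-\xx}) \le 2\ex^{-\xx}$ (valid for $\ex^{-\xx} \le 1/2$, the relevant regime) then yields \eqref{logProbLocalBound}.

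The main obstacle is bookkeeping for the events of dominating probability: the bracketing approximation, the score concentration, and the large deviation bound all have to hold simultaneously, and the inequality chains should preserve the correct factors $\spread(\rups,\xx)$ and $\ex^{-\xx}$ without accumulating hidden constants. Everything else is a direct computation exploiting the quadratic structure of $\La$ and completing the square around $\xiv$.
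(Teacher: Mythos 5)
Your proposal is correct and follows essentially the same route as the paper: bound the numerator via the upper-function inequality \eqref{excessUpperFunc} and a Gaussian change of variables with the rescaling by \( \gmi^{1/2} \), bound the denominator via the bracketing bound and completion of the square around \( \xiv \) (the paper packages the normalizing constant as \( \Cc(\xiv) \) and likewise drops \( \ex^{-\|\xiv\|^{2}/2} \le 1 \)), and control \( \nub(\rups) \) by the triangle inequality together with \eqref{scoreUpperBound}, Lemma~\ref{lemma: standGaussQuant}, and \( -\log(1-\ex^{-\xx}) \le 2\ex^{-\xx} \). Your observation that the derivation yields \( \spread(\rups,\xx) \) while the statement records the conservative \( 2\spread(\rups,\xx) \) is consistent with the paper's own proof.
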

  This result yields simple sufficient conditions on the value \(\rups\) which
  ensures the concentration of the posterior on \(\Upsilons(\rups)\).
  \begin{corollary}
  \label{corollary: postConcentrFullBound}
    Assume the conditions of Theorem~\ref{theorem: postConcentrFull}. Then additional inequality \( \gmi \rups^{2} \ge \qq^{2}(\dimtotal, \xx + \frac{p}{2} \log \frac{e}{b})\)
    ensures on a random set \(\Omega(\xx)\) of probability at least \(1 - 4 \ex^{-\xx}\)
    \begin{EQA}[c]
      \rho^{*}(\rups)
      \le
      \exp\{ 2\spread(\rups, \xx) + 2 \ex^{-\xx} - \xx\}.
    \end{EQA}
  \end{corollary}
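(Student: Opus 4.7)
The plan is to start from the conclusion of Theorem~\ref{theorem: postConcentrFull}, which on the random set \(\Omega_{\rups}(\xx)\) gives the bound
\begin{EQA}[c]
  \rho^{*}(\rups)
  \le
  \exp\{ 2\spread(\rups, \xx) + \nub(\rups) \} \,
     \gmi^{-\dimtotal / 2} \,
  \P\bigl( \| \gammav \|^{2} \ge \gmi \rups^{2} \bigr),
\end{EQA}
and then to show that (i) the quantity \( \nub(\rups) \) is at most \( 2\ex^{-\xx} \), and (ii) the product \( \gmi^{-\dimtotal/2} \P(\| \gammav \|^{2} \ge \gmi \rups^{2}) \) is at most \( \ex^{-\xx} \). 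Combining these two estimates with the prefactor yields exactly the claimed bound \( \exp\{ 2\spread(\rups,\xx) + 2\ex^{-\xx} - \xx \} \). Throughout we work on the intersection \( \Omega(\xx) \) of the sets controlling the quadratic approximation and the score, which by Theorem~\ref{theorem: basicQuadApprox} has probability at least \( 1 - 4\ex^{-\xx} \).

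For (i), the hypothesis \( \gmi \rups^{2} \ge \qq^{2}(\dimtotal, \xx + \tfrac{\dimtotal}{2} \log \tfrac{e}{\gmi}) \) implies in particular that \( \rups^{2} \ge \qq^{2}(\dimtotal, \xx)/\gmi \ge \qq^{2}(\dimtotal, \xx) \), and together with the uniform bound \( \BB \le \gmi^{-1} \Id \) (an immediate consequence of \( (\AssId) \)) we obtain \( \rups \ge \qq(\BB, \xx) + \qq(\dimtotal, \xx) \). Hence the second part of Theorem~\ref{theorem: postConcentrFull}, inequality \eqref{logProbLocalBound}, gives \( \nub(\rups) \le 2\ex^{-\xx} \) on \( \Omega(\xx) \).

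For (ii), I would use the standard Chernoff tail bound for a \( \chi^{2}_{\dimtotal} \) random variable,
\begin{EQA}[c]
  \P\bigl( \| \gammav \|^{2} \ge t \bigr)
  \le
  \Bigl( \frac{e t}{\dimtotal} \Bigr)^{\dimtotal/2} \exp(-t/2),
  \qquad t \ge \dimtotal,
\end{EQA}
obtained by optimising the exponential Markov inequality against \( \E \exp(\lambda \| \gammav \|^{2}) = (1 - 2\lambda)^{-\dimtotal/2} \). Applying this with \( t = \gmi \rups^{2} \) and multiplying by \( \gmi^{-\dimtotal/2} \) yields
\begin{EQA}[c]
  \gmi^{-\dimtotal/2} \P\bigl( \| \gammav \|^{2} \ge \gmi \rups^{2} \bigr)
  \le
  \Bigl( \frac{e \rups^{2}}{\dimtotal} \Bigr)^{\dimtotal/2} \exp(-\gmi \rups^{2}/2) .
\end{EQA}
Taking logarithms, the desired inequality \( \le \ex^{-\xx} \) becomes
\( \gmi \rups^{2} \ge 2\xx + \dimtotal + \dimtotal \log(\rups^{2}/\dimtotal) \), which upon substituting \( \rups^{2} \ge \qq^{2}(\dimtotal,\xx)/\gmi \) is implied by the hypothesis \( \gmi \rups^{2} \ge \dimtotal + 6\xx + 3\dimtotal \log(e/\gmi) \).

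The main technical hurdle is verifying the precise calibration in the last display: the role of the extra additive term \( \tfrac{\dimtotal}{2} \log(e/\gmi) \) inside \( \qq^{2}(\dimtotal, \cdot) \) is exactly to absorb the volume factor \( \gmi^{-\dimtotal/2} \) coming from the integral against the Lebesgue measure, and one must track this cancellation carefully. Everything else is a matter of putting the three pieces together.
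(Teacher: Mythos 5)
Your overall plan coincides with the paper's: the paper's entire proof of this corollary is the one-liner ``follows from Theorem~\ref{theorem: postConcentrFull} with use of Lemma~\ref{lemma: standGaussQuant}'', and your decomposition into (i) \( \nub(\rups) \le 2\ex^{-\xx} \) and (ii) \( \gmi^{-\dimtotal/2} \P(\|\gammav\|^{2} \ge \gmi\rups^{2}) \le \ex^{-\xx} \) is exactly what that one-liner unpacks to. The genuine difference is in step (ii): the paper plugs the shifted level \( \xx + \frac{\dimtotal}{2}\log\frac{e}{\gmi} \) directly into the quantile bound of Lemma~\ref{lemma: standGaussQuant}, so that \( \gmi^{-\dimtotal/2}\exp\{-\xx - \frac{\dimtotal}{2}\log\frac{e}{\gmi}\} = \ex^{-\xx - \dimtotal/2} \le \ex^{-\xx} \) with no further work, whereas you re-derive a Chernoff tail of the form \( (e\,t/\dimtotal)^{\dimtotal/2}\ex^{-t/2} \) at \( t = \gmi\rups^{2} \). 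Your route is workable but leaves the self-referential condition \( \gmi\rups^{2} \ge 2\xx + \dimtotal + \dimtotal\log(\rups^{2}/\dimtotal) \) with \( \rups^{2} \) inside the logarithm, which you replace by a sufficient condition without actually closing that loop; the quantile form of the lemma is designed precisely to avoid this. Two smaller points on step (i): the bound on \( \BB \) coming from \( (\AssId) \) is \( \BB \le \fis^{2}\Id \) (the relevant constant is \( \fis \) from the identifiability condition, not \( \gmi^{-1} \) from \( (\LL\rr) \)); and the inference from \( \rups \ge \qq(\dimtotal,\xx) \) to \( \rups \ge \qq(\BB,\xx) + \qq(\dimtotal,\xx) \) does not follow from that single inequality alone, since the right-hand side exceeds \( \qq(\dimtotal,\xx) \) --- it needs the extra slack supplied by the \( \frac{\dimtotal}{2}\log\frac{e}{\gmi} \) term, which you have already spent on absorbing the volume factor. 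These are constant-chasing issues of the same kind the paper itself suppresses, not structural flaws in your argument.
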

  The result follows from Theorem~\ref{theorem: postConcentrFull} with use of Lemma~\ref{lemma: standGaussQuant}.

\subsection{Tail posterior probability for target parameter}
\label{sec: postConcentrTarget}
  The next major step in our analysis is to check that \(\thetav\) concentrates in a small vicinity \(\Thetas(\rups) = \bigl\{\thetav\colon \|\DPrc (\thetav - \thetavs)\| \le \rups\bigr\}\) of the central point \(\thetavs = \Proj \upsilonvs\) with a properly selected \(\rups\).
  The concentration properties of the posterior will be described by using the random quantity
  \begin{EQA}
    \rho(\rups)
    & \eqdef &
    \frac{\int_{\Upsilon} \exp \bigl\{L(\upsilonv, \upsilonvs) \bigr\} \priorden(\upsilonv)
            \Ind\bigl\{\thetav \notin \Thetas(\rups)\bigr\} d \upsilonv}
         {\int_{\Upsilon} \exp \bigl\{L(\upsilonv,\upsilonvs) \bigr\} \priorden(\upsilonv)
            \Ind\bigl\{\thetav \in \Thetas(\rups)\bigr\} d \upsilonv}.
  \end{EQA}
  In what follows we suppose that prior is uniform, i.e.
  \(\priorden(\upsilonv) \equiv 1\), \( \upsilonv \in \Upsilon\).
  This results in the following representation for \(\rho(\rups)\):
  \begin{EQA}
    \rho(\rups)
    &=&
    \frac{\int_{\Upsilon} \exp \bigl\{L(\upsilonv, \upsilonvs) \bigr\}
            \Ind\bigl\{\thetav \notin \Thetas(\rups)\bigr\} d \upsilonv}
         {\int_{\Upsilon} \exp \bigl\{L(\upsilonv, \upsilonvs) \bigr\}
            \Ind\bigl\{\thetav \in \Thetas(\rups)\bigr\} d \upsilonv}.
  \label{concentrBvmTarget}
  \end{EQA}
  Obviously
  \(\P\bigl(\thetav \not\in \Thetas(\rups) \cond \Yv \bigr) \le \rho(\rups)\).
  Therefore, small values of \(\rho(\rups)\) indicate a small posterior probability of the large deviation set \(\{\thetav \notin \Thetas(\rups)\}\).

  \begin{theorem}
  \label{theorem: postConcentrTarget}
    Suppose \eqref{localQuadApprox}.
    Then for \( \gmi \rups^{2} \ge \qq^{2}(\dimtotal, \xx + \frac{p}{2} \log \frac{e}{b})\) on \( \Omega(\xx)\) of probability at least \(1 - 4 \ex^{-\xx}\)
    \begin{EQA}[c]
      \rho(\rups)
      \le
      \rho^{*}(\rups)
      \le
      \exp\{ 2\spread(\rups, \xx) + 2 \ex^{-\xx} - \xx\}.
    \end{EQA}
  \end{theorem}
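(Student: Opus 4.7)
The plan is to reduce the tail bound for the target posterior to the already established tail bound for the full posterior in Corollary~\ref{corollary: postConcentrFullBound}, via a geometric inclusion between the local sets $\Upsilons(\rups)$ and $\{\Proj\upsilonv \in \Thetas(\rups)\}$. The key point is that concentration of $\upsilonv$ in $\Upsilons(\rups)$ automatically yields concentration of $\thetav = \Proj\upsilonv$ in $\Thetas(\rups)$, so $\rho(\rups)$ can be dominated by $\rho^{*}(\rups)$ and the theorem follows from the finite-dimensional result.

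First I would verify the inclusion $\Upsilons(\rups) \subseteq \{\upsilonv \in \Upsilon : \Proj\upsilonv \in \Thetas(\rups)\}$. Using the block form \eqref{blockGradFischer} of $\DFc^{2}$ and writing $\upsilonv-\upsilonvs = (\dthetav,\detav)$ with $\dthetav = \thetav-\thetavs$, a direct computation gives
\begin{EQA}
\|\DFc(\upsilonv-\upsilonvs)\|^{2}
&=& \dthetav^{\T}\DPc^{2}\dthetav + 2\dthetav^{\T}\Ac\detav + \detav^{\T}\HHc^{2}\detav .
\end{EQA}
Completing the square in $\detav$ (the minimum is attained at $\detav = -\HHc^{-2}\Ac^{\T}\dthetav$) yields
\begin{EQA}
\inf_{\detav} \|\DFc(\upsilonv-\upsilonvs)\|^{2}
&=& \dthetav^{\T}\bigl(\DPc^{2} - \Ac\HHc^{-2}\Ac^{\T}\bigr)\dthetav
= \|\DPrc\dthetav\|^{2},
\end{EQA}
by the definition \eqref{fisherPartial} of $\DPrc^{2}$. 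Hence $\|\DPrc(\thetav-\thetavs)\| \le \|\DFc(\upsilonv-\upsilonvs)\|$ for every $\upsilonv$, which gives the asserted inclusion. Contrapositively, $\{\upsilonv : \thetav \notin \Thetas(\rups)\} \subseteq \Upsilon \setminus \Upsilons(\rups)$.

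The inclusion implies that the numerator in \eqref{concentrBvmTarget} is bounded above by the numerator in \eqref{concentrBvmFull}, while the denominator in \eqref{concentrBvmTarget} is bounded below by the denominator in \eqref{concentrBvmFull} (both integrands are nonnegative). Therefore $\rho(\rups) \le \rho^{*}(\rups)$ pointwise. Applying Corollary~\ref{corollary: postConcentrFullBound} under the imposed condition $\gmi \rups^{2} \ge \qq^{2}\bigl(\dimtotal,\xx + \tfrac{p}{2}\log\tfrac{e}{b}\bigr)$ gives the desired bound $\rho^{*}(\rups) \le \exp\{2\spread(\rups,\xx) + 2\ex^{-\xx} - \xx\}$ on an event $\Omega(\xx)$ of probability at least $1 - 4\ex^{-\xx}$, which completes the proof. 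The only conceptual step is the geometric inclusion above; the remaining work is purely a transfer of the full-parameter tail bound already prepared in Section~\ref{sec: bracketing}.
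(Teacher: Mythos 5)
Your proposal is correct and follows essentially the same route as the paper: the set inclusions \( \{\thetav \notin \Thetas(\rups)\} \subseteq \Upsilon \setminus \Upsilons(\rups) \) and \( \Upsilons(\rups) \subseteq \{\thetav \in \Thetas(\rups)\} \) give \( \rho(\rups) \le \rho^{*}(\rups) \) by comparing numerators and denominators, after which the bound is imported from the full-parameter concentration result. The only difference is that you explicitly verify the inclusion via the Schur-complement identity \( \inf_{\detav}\|\DFc(\upsilonv-\upsilonvs)\|^{2} = \|\DPrc\dthetav\|^{2} \), a step the paper labels ``obviously''; your computation is a correct and welcome justification of it.
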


\subsection{Local Gaussian approximation of the posterior. Upper bound}
\label{sec: gaussUpperBound}
  It is convenient to introduce local conditional expectation: for a random variable \(\eta\), define
  \begin{EQA}[c]
    \Ec \eta
    \eqdef
    \E \Bigl[ \eta \Ind\bigl\{ \thetav \in \Thetas(\rups) \bigr\} \cond \Yv
    \Bigr].
  \end{EQA}
  The following theorem gives exact statement about upper bound of this posterior expectation.
  Let
  \begin{EQA}
    \thetavd
    & \eqdef &
    \thetavs + \DPrc^{-1} \xivr.
  \end{EQA}

  \begin{theorem}
  \label{theorem: gaussExpectUpperBound}
    Suppose \eqref{localQuadApprox}.
    Then for any \(\ff\colon \RR^{\dimp} \to \RR_+\) it holds on \(\Omega_{\rups}(\xx)\)
    \begin{EQA}[c]
    \label{gaussExpectUpper}
      \Ec \ff\bigl(\DPrc (\vthetav - \thetavd)\bigr) \le
      \exp\bigl\{
          \spread^{+}(\rups, \xx) \bigr\} \, \E \ff(\gammav),
    \end{EQA}
    where \( \gammav \sim \ND(0, \Id_{\dimp}) \) and
    \begin{EQA}
      \spread^{+}(\rups, \xx)
      & \eqdef &
       2 \spread(\rups, \xx) + \nub(\rups) + \rho_{\ff}(\rups),
      \\
      \rho_{\ff}(\rups)
      &\eqdef&
      \frac{\int_{\Upsilon \setminus \Upsilons(\rups)} \exp \bigl\{L(\upsilonv, \upsilonvs)\bigr\} \,
          \ff\bigl(\DPrc (\thetav - \thetavd)\bigr) \, d\upsilonv}{\int_{\Upsilons(\rups)} \exp \bigl\{L(\upsilonv, \upsilonvs)\bigr\} \,
          \ff\bigl(\DPrc (\thetav - \thetavd)\bigr) \, d\upsilonv}.
    \end{EQA}
  \end{theorem}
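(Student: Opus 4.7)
Write the conditional expectation as a ratio \(I_{N}/I_{D}\), where
\begin{EQA}
I_{N}
&\eqdef&
\int_{\Upsilon}\Ind\{\thetav\in\Thetas(\rups)\}\,\ff\bigl(\DPrc(\thetav-\thetavd)\bigr)\,e^{L(\upsilonv,\upsilonvs)}d\upsilonv,\qquad
I_{D}
\eqdef
\int_{\Upsilon}e^{L(\upsilonv,\upsilonvs)}d\upsilonv.
\end{EQA}
The strategy is to sandwich \(L\) between \(\L\pm\spread(\rups,\xx)\) on \(\Upsilons(\rups)\), which reduces both integrals to quantities against \(e^{\L(\upsilonv,\upsilonvs)}\) that can then be evaluated explicitly as Gaussian expressions.

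The inclusion \(\Upsilons(\rups)\subseteq\{\thetav\in\Thetas(\rups)\}\) follows from the block identity
\begin{EQA}
\|\DFc(\upsilonv-\upsilonvs)\|^{2}
&=&
\|\DPrc(\thetav-\thetavs)\|^{2}
+\|\HHc(\etav-\etavs)+\HHc^{-1}\Ac^{\T}(\thetav-\thetavs)\|^{2}.
\end{EQA}
Splitting \(I_{N}\) at the boundary of \(\Upsilons(\rups)\) and bounding the indicator by one outside, the inclusion yields
\begin{EQA}
I_{N}
&\le&
\int_{\Upsilons(\rups)}\ff\bigl(\DPrc(\thetav-\thetavd)\bigr)e^{L}d\upsilonv
+\int_{\Upsilon\setminus\Upsilons(\rups)}\ff\bigl(\DPrc(\thetav-\thetavd)\bigr)e^{L}d\upsilonv
\\
&=&
\bigl(1+\rho_{\ff}(\rups)\bigr)\int_{\Upsilons(\rups)}\ff\bigl(\DPrc(\thetav-\thetavd)\bigr)e^{L}d\upsilonv.
\end{EQA}
On \(\Omega_{\rups}(\xx)\), the bracketing bound \eqref{localQuadApprox} gives \(e^{L}\le e^{\spread(\rups,\xx)}e^{\L}\) on \(\Upsilons(\rups)\), so the last integral is at most \(e^{\spread(\rups,\xx)}\int_{\Upsilon}\ff\bigl(\DPrc(\thetav-\thetavd)\bigr)e^{\L}d\upsilonv\); the reverse bracketing \(e^{L}\ge e^{-\spread(\rups,\xx)}e^{\L}\) combined with the trivial restriction \(I_{D}\ge\int_{\Upsilons(\rups)}e^{L}d\upsilonv\) gives \(I_{D}\ge e^{-\spread(\rups,\xx)}\int_{\Upsilons(\rups)}e^{\L}d\upsilonv\). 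Combining,
\begin{EQA}
\Ec\ff\bigl(\DPrc(\vthetav-\thetavd)\bigr)
&\le&
\bigl(1+\rho_{\ff}(\rups)\bigr)\,e^{2\spread(\rups,\xx)}\,\frac{\int_{\Upsilon}\ff\bigl(\DPrc(\thetav-\thetavd)\bigr)e^{\L}d\upsilonv}{\int_{\Upsilons(\rups)}e^{\L}d\upsilonv}.
\end{EQA}

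It remains to identify the two Gaussian ratios. The measure with density proportional to \(e^{\L(\upsilonv,\upsilonvs)}\) is the normal law \(\ND\bigl(\upsilonvs+\DFc^{-2}\nabla\LL(\upsilonvs),\DFc^{-2}\bigr)\); its \(\thetav\)-marginal therefore has covariance \(\Proj\DFc^{-2}\Proj^{\T}=\DPrc^{-2}\) and mean \(\thetavs+\Proj\DFc^{-2}\nabla\LL(\upsilonvs)\). The Schur-complement form of \(\DFc^{-2}\), whose top block-row equals \((\DPrc^{-2},-\DPrc^{-2}\Ac\HHc^{-2})\), then gives
\begin{EQA}
\Proj\DFc^{-2}\nabla\LL(\upsilonvs)
&=&
\DPrc^{-2}\bigl(\score_{\thetav}-\Ac\HHc^{-2}\score_{\etav}\bigr)
=\DPrc^{-2}\scorer_{\thetav}
=\DPrc^{-1}\xivr,
\end{EQA}
so the mean is exactly \(\thetavd\). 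Hence \(\DPrc(\thetav-\thetavd)\sim\ND(0,\Id_{\dimp})\) under the normalized \(e^{\L}\)-measure, which yields \(\int\ff\,e^{\L}d\upsilonv/\int e^{\L}d\upsilonv=\E\ff(\gammav)\). For the second ratio, the substitution \(w=\DFc(\upsilonv-\upsilonvs)\) rewrites \(\L\) as \(-\tfrac12\|w-\xiv\|^{2}+\tfrac12\|\xiv\|^{2}\), giving \(\int_{\Upsilons(\rups)}e^{\L}d\upsilonv/\int_{\Upsilon}e^{\L}d\upsilonv=\P\bigl(\|\gammav+\xiv\|\le\rups\mid\Yv\bigr)=e^{-\nub(\rups)}\). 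Multiplying the three factors and using \(1+\rho_{\ff}\le e^{\rho_{\ff}}\) produces \eqref{gaussExpectUpper}.

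The main obstacle is the block-matrix identification \(\Proj\DFc^{-2}\nabla\LL(\upsilonvs)=\DPrc^{-1}\xivr\): this is the Schur-complement inversion of \(\DFc^{2}\) and is precisely the algebraic content of Neyman's profile-score construction, which is what causes the efficient score \(\xivr\) and the effective information \(\DPrc^{2}\) to appear naturally when only the target coordinate \(\thetav\) is localized. Everything else amounts to bookkeeping of the bracketing bound plus elementary Gaussian integration.
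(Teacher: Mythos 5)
Your argument is correct and follows essentially the same route as the paper's proof: split the numerator to extract the factor \(1+\rho_{\ff}(\rups)\), apply the bracketing bound \eqref{localQuadApprox} in both directions to replace \(L\) by \(\L\) on \(\Upsilons(\rups)\), identify the normalized \(e^{\L}\)-measure as \(\ND(\upsilonvs+\DFc^{-1}\xiv,\DFc^{-2})\) whose \(\thetav\)-marginal is \(\ND(\thetavd,\DPrc^{-2})\) via the Schur-complement block of \(\DFc^{-2}\), and absorb the restriction of the denominator to \(\Upsilons(\rups)\) into the factor \(e^{\nub(\rups)}\). The only cosmetic difference is that the paper carries the explicit normalizing constant \(\Cc(\xiv)\) through the computation instead of working with ratios, and your invocation of the inclusion \(\Upsilons(\rups)\subseteq\{\thetav\in\Thetas(\rups)\}\) is not actually needed for this upper bound (bounding the indicator by one suffices).
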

  Define for random event \(\eta \in A \subseteq \RR^{\dimp}\):
  \begin{EQA}[c]
    \Pc(\eta \in A) = \Ec \Ind\{\eta \in A\}.
  \end{EQA}
  The next result considers a special case with
  \( \ff(\uv) = \bigl| \lambdav^{\T} \uv \bigr|^{2} \) and \( \ff(\uv) = \Ind(\uv \in A) \) for any measurable set \(A\).
  \begin{corollary}
  \label{corollary: gaussExpectUpperQuadInd}
    For any \( \lambdav \in \R^{\dimp} \), it holds on \( \Omega_{\rups}(\xx) \)
    \begin{EQA}
    \label{gaussExpectUpperQuad}
      \Ec \bigl| \lambdav^{\T} \DPrc (\vthetav - \thetavd) \bigr|^{2}
      & \le &
      \exp\bigl\{\spread^{+}(\rups, \xx)\bigr\} \| \lambdav \|^{2}.
    \end{EQA}
    For any measurable set \( A \subseteq \RR^{\dimp}\), it holds on \( \Omega_{\rups}(\xx) \)
    \begin{EQA}
    \label{gaussProbUpper}
      \Pc\bigl( \DPrc (\vthetav - \thetavb) \in A \bigr)
      & \leq &
      \exp \bigl\{ \spread^{+}(\rups,\xx) \bigr\} 
      \P\bigl( \gammav \in A \bigr).
    \end{EQA}
    On \( \Omega(\xx) \) one obtains
\begin{EQA}
	\spread^{+}(\rups, \xx) 
	& \le &
	2 \spread(\rups, \xx) + 2 \ex^{-\xx} + 2 \exp\bigl\{\spread(\rups, \xx) + 4\ex^{-\xx} - \xx\bigr\} .
\label{spreadprxt}
\end{EQA}
  \end{corollary}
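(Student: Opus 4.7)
The proof unfolds in two stages. First, I would derive the two displayed inequalities by specialising Theorem~\ref{theorem: gaussExpectUpperBound} to two natural test functions. Setting \( \ff(\uv) = |\lambdav^{\T} \uv|^{2} \) in \eqref{gaussExpectUpper} gives \( \E \ff(\gammav) = \lambdav^{\T} \E[\gammav \gammav^{\T}] \lambdav = \|\lambdav\|^{2} \) for \( \gammav \sim \ND(0, \Id_{\dimp}) \), yielding \eqref{gaussExpectUpperQuad}. Similarly, \( \ff(\uv) = \Ind(\uv \in A) \) is a non-negative measurable function for any Borel set \( A \); then \( \E \ff(\gammav) = \P(\gammav \in A) \) and \eqref{gaussProbUpper} follows. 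The only subtlety here is that the value of \( \spread^{+}(\rups,\xx) \) depends on the choice of \( \ff \) through the ratio \( \rho_{\ff}(\rups) \), so a uniform bound on \( \spread^{+}(\rups,\xx) \) across the two admissible choices of \( \ff \) is needed to obtain \eqref{spreadprxt}.

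The second stage is to control \( \spread^{+}(\rups, \xx) = 2\spread(\rups, \xx) + \nub(\rups) + \rho_{\ff}(\rups) \) on \( \Omega(\xx) \). The first summand is already in the required form. The bound \( \nub(\rups) \le 2\ex^{-\xx} \) is precisely \eqref{logProbLocalBound} from Theorem~\ref{theorem: basicQuadApprox}, valid under the sizing assumption \( \rups \ge \qq(\BB, \xx) + \qq(\dimtotal, \xx) \) already implicit in the hypotheses. It remains to establish \( \rho_{\ff}(\rups) \le 2\exp\bigl\{\spread(\rups, \xx) + 4\ex^{-\xx} - \xx\bigr\} \) uniformly over the two admissible choices of \( \ff \).

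For this last bound I would follow the strategy of Theorem~\ref{theorem: postConcentrTarget}: on \( \Upsilons(\rups) \) use \( |L - \L| \le \spread(\rups, \xx) \) from \eqref{localQuadApprox} to lower-bound the denominator integral by \( \exp(-\spread(\rups,\xx)) \int_{\Upsilons(\rups)} \exp(\L) \ff\, d\upsilonv \); outside \( \Upsilons(\rups) \), use the excess upper bound \eqref{excessUpperFunc} to dominate the numerator integral by the corresponding integral of \( \exp\bigl(-\gmi \|\DFc(\upsilonv - \upsilonvs)\|^{2}/2\bigr) \ff \). Completing the square in \( \L \) and marginalising over \( \etav \) reduces both integrals to Gaussian expectations of \( \ff(\DPrc(\thetav - \thetavd)) \). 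The \( \ff \)-dependence then factors out as a common factor \( \E \ff(\gammav) \), leaving a universal ratio controlled by the Gaussian tail \( \P(\|\gammav\|^{2} \ge \gmi \rups^{2}) \le \ex^{-\xx} \) under the assumed lower bound on \( \rups \). The main obstacle is verifying that this cancellation is truly \( \ff \)-independent: for the quadratic \( \ff \) it follows from explicit Gaussian moment formulas, while for the indicator \( \ff = \Ind_{A} \) one must argue that the mass of the local Gaussian density on \( \{\thetav \colon \DPrc(\thetav - \thetavd) \in A\} \) scales with \( \P(\gammav \in A) \) in the same way on both sides. The additional factor \( 2 \) and the \( 4 \ex^{-\xx} \) in \eqref{spreadprxt} (compared with the sharper \( \rho^{*}(\rups) \) bound) absorb the small approximation errors introduced by the two Gaussian approximations plus the \( \nub \) correction.
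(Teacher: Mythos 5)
Your first stage and the treatment of \( \nub(\rups) \) match the paper exactly: the two displayed inequalities are direct specializations of \eqref{gaussExpectUpper} to \( \ff(\uv)=|\lambdav^{\T}\uv|^{2} \) and \( \ff=\Ind_{A} \), and \( \nub(\rups)\le 2\ex^{-\xx} \) is \eqref{logProbLocalBound}. The problem is the remaining term \( \rho_{\ff}(\rups) \), which the paper itself calls ``the only important step'' and which your proposal does not actually establish. Your claimed mechanism --- that after replacing \( L \) by \( -\gmi\|\DFc(\upsilonv-\upsilonvs)\|^{2}/2 \) in the numerator and by \( \L \) in the denominator ``the \( \ff \)-dependence factors out as a common factor \( \E\ff(\gammav) \)'' --- is not correct. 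The numerator reduces to a \emph{truncated} Gaussian quadratic moment of the form \( \E\bigl|\lambdav_{0}^{\T}(\gammav+\xiv)\bigr|^{2}\Ind(\|\gammav\|^{2}\ge\gmi\rups^{2}) \) with respect to a Gaussian of variance \( \gmi^{-1}\DFc^{-2} \), and this does not factor. The paper handles it for the quadratic \( \ff \) only, via the moment--tail inequality \eqref{standGaussExpectQuadQuantBound}; combined with the matching lower bound for the denominator (full second moment \( \|\lambdav_{0}\|^{2}+|\lambdav_{0}^{\T}\xiv|^{2} \) minus a truncation error controlled by \eqref{standGaussProbQuantBound}), the quantity that cancels in the ratio is \( \|\lambdav_{0}\|^{2} \), and the sizing condition \( \gmi\rups^{2}\ge(2\dimtotal+4)\log(\ex/\gmi)+2\qq(\BB,\xx)+4\xx \) is what produces the factor \( \ex^{-\xx} \) in \eqref{spreadprxt}. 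None of this computation appears in your proposal, and it is precisely where the constants \( 2 \) and \( 4\ex^{-\xx} \) and the extra \( \log\gmi \) and \( \|\xiv\|^{2} \) terms must be tracked.

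For the indicator case you correctly flag the obstacle but leave it unresolved, and in the form you state it cannot be resolved: for a set \( A \) of small Gaussian measure lying outside the image of \( \Upsilons(\rups) \), the denominator of \( \rho_{\Ind_{A}}(\rups) \) can vanish while the numerator does not, so no bound \( \rho_{\Ind_{A}}(\rups)\le 2\exp\{\spread(\rups,\xx)+4\ex^{-\xx}-\xx\} \) uniform in \( A \) is available. (The paper is silent here too --- its proof bounds only \( \rho_{x^{2}}(\rups) \) --- but one must either restrict \eqref{spreadprxt} to the quadratic case, which is all that the proof of Theorem~\ref{theorem: bvmTarget} uses for the moment bounds, or give a separate argument for \eqref{gaussProbUpper} that avoids \( \rho_{\Ind_{A}} \), e.g.\ by absorbing the numerator's contribution outside \( \Upsilons(\rups) \) into the additive tail bound of Theorem~\ref{theorem: postConcentrFull}.) As written, your proposal identifies the correct skeleton but omits the one computation that constitutes the proof.
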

  The next corollary describes an upper bound for the posterior probability in case of changing of scaling .
  \begin{corollary}
  \label{corollary: changeMatrixUpper}
    Let \(\DPrd\) be symmetric \(\dimp \times \dimp\) matrix such that \(\| \Id -
    \DPrd^{-1}\DPrc^{2}\DPrd^{-1}\| \le \alpha\)
    Let also \(\hat{\thetav} \in \RR^{\dimp}\) be such that 
    \(\|\DPrc(\thetavd - \hat{\thetav})\| \le \, \beta\).
    Then for any measurable set \( A \subset \R^{\dimp} \), it holds on \( \Omega(\xx) \)
    with \( \deltav_{0} \eqdef \DPrd (\thetavd -  \hat{\thetav} ) \)
    \begin{EQA}
    \label{gaussProbUpperCnahgeMatrix}
      \Pc\bigl( \DPrd (\vthetav -  \hat{\thetav}) \in A \bigr)
      & \leq &
      \exp \bigl\{ \spread^{+}(\rups, \xx) \bigr\}
      \P\bigl( \DPrd \DPrc^{-1} \gammav + \deltav_{0} \in A \bigr)
      \\
      & \leq &
      \exp \bigl\{ \spread^{+}(\rups, \xx) \bigr\}
      \Bigl( \P\bigl( \gammav \in A \bigr) + \frac{1}{2} \sqrt{\alpha^{2} \dimp + (1 + \alpha)^{2} \beta^{2}} \Bigr).
    \end{EQA}
  \end{corollary}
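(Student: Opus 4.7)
The plan is to treat the two chained inequalities separately. For the first inequality, the idea is a straightforward affine change of coordinates: writing $\DPrd(\vthetav - \hat{\thetav}) = \DPrd\DPrc^{-1} \cdot \DPrc(\vthetav - \thetavd) + \deltav_{0}$, the event $\{\DPrd(\vthetav - \hat{\thetav}) \in A\}$ becomes $\{\DPrc(\vthetav - \thetavd) \in B\}$ for the transformed set $B \eqdef \DPrc\DPrd^{-1}(A - \deltav_{0})$. I would then apply the probability bound \eqref{gaussProbUpper} of Corollary~\ref{corollary: gaussExpectUpperQuadInd} to the measurable set $B$ to obtain $\Pc(\DPrc(\vthetav - \thetavd) \in B) \le \exp\{\spread^{+}(\rups,\xx)\}\,\P(\gammav \in B)$, and close the step by inverting the change of variables on the Gaussian side via $\P(\gammav \in B) = \P(\DPrd\DPrc^{-1}\gammav + \deltav_{0} \in A)$. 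This yields the first line of the claimed bound.

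For the second inequality, compare the standard Gaussian law with the law of $Y \eqdef \DPrd\DPrc^{-1}\gammav + \deltav_{0}$, which is $\ND(\deltav_{0}, \Sigma)$ for $\Sigma \eqdef \DPrd\DPrc^{-2}\DPrd$. By the general estimate $\P(Y \in A) \le \P(\gammav \in A) + d_{\mathrm{TV}}(Y, \gammav)$, it suffices to bound the total variation distance. The first hypothesis reads $\|\Id - \Sigma^{-1}\| \le \alpha$, placing all eigenvalues of $\Sigma^{-1}$ in $[1-\alpha, 1+\alpha]$; this in turn produces the companion bound $\|\DPrd\DPrc^{-1}\|^{2} \le (1+\alpha)^{2}$, hence $\|\deltav_{0}\|^{2} = \|\DPrd\DPrc^{-1}\,\DPrc(\thetavd - \hat{\thetav})\|^{2} \le (1+\alpha)^{2}\beta^{2}$. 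Plugging these into the explicit Gaussian Kullback--Leibler formula and using the second-order bound $x - 1 - \log x \le \tfrac{1}{2}(x-1)^{2}$ on each eigenvalue of $\Sigma^{-1}$ gives a covariance contribution of order $\tfrac{1}{2}\alpha^{2}\dimp$, while the quadratic form in the mean shift contributes $\tfrac{1}{2}(1+\alpha)^{2}\beta^{2}$. Pinsker's inequality $d_{\mathrm{TV}} \le \sqrt{\mathrm{KL}/2}$ then delivers the advertised $\tfrac{1}{2}\sqrt{\alpha^{2}\dimp + (1+\alpha)^{2}\beta^{2}}$.

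The main obstacle is the careful bookkeeping in the second step. The single operator bound $\|\Id - \DPrd^{-1}\DPrc^{2}\DPrd^{-1}\| \le \alpha$ must feed into the KL through two independent channels: the log-determinant part, where one needs the quadratic Taylor bound on $x - 1 - \log x$ to extract $\alpha^{2}$ rather than the naive $\alpha$, and the shifted quadratic form, where the correct direction of the eigenvalue inequality must be used to produce the clean $(1+\alpha)^{2}$ without spurious $(1-\alpha)^{-1}$ factors. The first step, by contrast, is a routine algebraic manipulation of the affine map $\DPrd\DPrc^{-1}$ and raises no difficulty beyond tracking the transformation of the set $A$.
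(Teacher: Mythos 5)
Your proposal follows essentially the same route as the paper: the first inequality is Theorem~\ref{theorem: gaussExpectUpperBound} applied to the indicator of the affinely transformed set (equivalently, \eqref{gaussProbUpper} after the change of variables \( \uv \mapsto \DPrd \DPrc^{-1} \uv + \deltav_{0} \)), and the second is the Gaussian Kullback--Leibler computation followed by Pinsker's inequality, which is exactly Lemma~\ref{lemma: kullbTwoNormals}. Two constants in your bookkeeping are slightly off, though neither affects the conclusion: the inequality \( x - 1 - \log x \le \frac{1}{2}(x-1)^{2} \) fails for \( x < 1 \) (for eigenvalues below one you only have \( x - 1 - \log x \le (x-1)^{2} \), which still yields the required \( \alpha^{2}\dimp \) rather than your \( \frac{1}{2}\alpha^{2}\dimp \)), and the hypothesis \( \| \Id - \DPrd^{-1}\DPrc^{2}\DPrd^{-1} \| \le \alpha \) gives directly \( \| \DPrd \DPrc^{-1} \|^{2} \le (1-\alpha)^{-1} \) rather than \( (1+\alpha)^{2} \) --- the two agree only for \( \alpha \) small enough, a looseness the paper's own proof shares when it writes \( \| \deltav_{0} \|^{2} \le (1+\alpha)\beta^{2} \).
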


\subsection{Local Gaussian approximation of the posterior. Lower bound}
\label{sec: gaussLowerBound}
  Now we present a local lower bound for the posterior measure.
  \begin{theorem}
  \label{theorem: gaussExpectLowerBound}
    Suppose \eqref{localQuadApprox}.
    Then for any \(\ff\colon \RR^{\dimp} \to \RR_+\) it holds on \( \Omega_{\rups}(\xx) \)
    \begin{EQA}
      \Ec \ff\bigl(\DPrc (\vthetav - \thetavd)\bigr)
      & \ge &
      \exp\bigl\{
          -\spread^{-}(\rups, \xx) \bigr\} \, 
          \E \bigl\{\ff(\gammav) \Ind\bigl(\|\gammav + \xivr\| \le \rups\bigr)\bigr\},
    \label{gaussExpectLower}
    \end{EQA}
    where
    \begin{EQA}
      \spread^{-}(\rups, \xx)
      & \eqdef &
      2 \spread(\rups, \xx) + \nub(\rups) + \rho^{*}(\rups) + 2\tilde{\rho}_{\ff}(\rups),
      \\
      \tilde{\rho}_{\ff}(\rups)
      & \eqdef &
      \frac{\int_{\RR^{\dimtotal} \setminus \Upsilons(\rups)}
      	\exp \bigl\{\L(\upsilonv, \upsilonvs)\bigr\} \,
          \ff\bigl(\DPrc (\thetav - \thetavd)\bigr) \, d\upsilonv}
      {\int_{\Upsilons(\rups)} \exp \bigl\{\L(\upsilonv, \upsilonvs)\bigr\} \,
          \ff\bigl(\DPrc (\thetav - \thetavd)\bigr) \, d\upsilonv}.
    \end{EQA}
  \end{theorem}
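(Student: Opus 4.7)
The plan is to bound $\Ec \ff = \mathrm{Num}/\mathrm{Denom}$ with $\mathrm{Num} = \int_{\Thetas(\rups) \times \mathcal{H}} \exp\{L(\upsilonv,\upsilonvs)\} \ff(\DPrc(\thetav - \thetavd)) d\upsilonv$ and $\mathrm{Denom} = \int_{\Upsilon} \exp\{L(\upsilonv,\upsilonvs)\} d\upsilonv$, exploiting \eqref{localQuadApprox} to replace $L$ by the purely quadratic process $\L$ on $\Upsilons(\rups)$, for which the relevant integrals are explicit Gaussians. The key block computation, obtained by completing the square in the $\etav$-block of $\L$ and using the Schur complement $\DPrc^{2} = \DPc^{2} - \Ac \HHc^{-2} \Ac^{\T}$ together with the identity $\|\xiv\|^{2} = \|\xivr\|^{2} + \|\HHc^{-1}\score_{\etav}\|^{2}$, is as follows: setting $K \eqdef (2\pi)^{p/2}(\det \DFc)^{-1}\exp(\|\xiv\|^{2}/2)$, one finds $\int_{\R^{p}} \exp\{\L\} d\upsilonv = K$ and $\int_{\Upsilons(\rups)}\exp\{\L\} d\upsilonv = K e^{-\nub(\rups)}$, and, after the substitution $w = \DPrc(\thetav - \thetavs) - \xivr$,
\begin{EQA}
\int_{\Thetas(\rups) \times \R^{p-q}} \exp\{\L(\upsilonv,\upsilonvs)\} \, \ff(\DPrc(\thetav - \thetavd)) \, d\upsilonv
&=&
K \cdot \E\bigl\{\ff(\gammav) \Ind(\|\gammav + \xivr\| \le \rups)\bigr\}.
\end{EQA}

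For the numerator I would first verify the inclusion $\Upsilons(\rups) \subset \Thetas(\rups) \times \mathcal{H}$: minimizing the quadratic $\|\DFc(\upsilonv - \upsilonvs)\|^{2}$ over $\etav$ at fixed $\thetav$ yields exactly $\|\DPrc(\thetav - \thetavs)\|^{2}$, so the global bound $\rups^{2}$ implies the efficient one. Restriction to this subset and the bracketing bound then give $\mathrm{Num} \ge \exp(-\spread(\rups,\xx)) \int_{\Upsilons(\rups)} \exp\{\L\} \ff \, d\upsilonv$. Since in addition $\Upsilons(\rups) \subset \Thetas(\rups) \times \R^{p-q} \subset \R^{p}$, the definition of $\tilde\rho_{\ff}$ gives $\int_{\Thetas(\rups) \times \R^{p-q}} \exp\{\L\} \ff \le (1 + \tilde\rho_{\ff}(\rups)) \int_{\Upsilons(\rups)} \exp\{\L\} \ff$; combining with the third identity above and $(1+x)^{-1} \ge \exp(-2x)$ for $x \ge 0$ small yields
\begin{EQA}
\int_{\Upsilons(\rups)} \exp\{\L\} \ff \, d\upsilonv
&\ge&
\exp(-2\tilde\rho_{\ff}(\rups)) \, K \, \E\bigl\{\ff(\gammav) \Ind(\|\gammav + \xivr\| \le \rups)\bigr\}.
\end{EQA}

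For the denominator the definition of $\rho^{*}$ together with the reverse bracketing $L \le \L + \spread$ gives $\mathrm{Denom} = (1 + \rho^{*}(\rups)) \int_{\Upsilons(\rups)} \exp\{L\} \le (1 + \rho^{*}) \exp(\spread) K e^{-\nub}$, and $1 + \rho^{*} \le \exp(\rho^{*})$ leads to $\mathrm{Denom} \le K \exp(\spread + \rho^{*} - \nub)$. Dividing the two bounds, cancelling $K$, and using $\nub(\rups) \ge 0$ so that $\exp(\nub) \ge \exp(-\nub)$, we obtain
\begin{EQA}
\Ec \ff
&\ge&
\exp(-2\spread - \rho^{*} - 2\tilde\rho_{\ff} + \nub) \, \E\bigl\{\ff(\gammav)\Ind(\|\gammav+\xivr\|\le\rups)\bigr\}
\\
&\ge&
\exp\bigl(-\spread^{-}(\rups,\xx)\bigr) \, \E\bigl\{\ff(\gammav)\Ind(\|\gammav+\xivr\|\le\rups)\bigr\},
\end{EQA}
as claimed. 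The main technical hurdle is the block Gaussian identity in the first displayed equation: it rests on the determinant identity $\det\DFc = \det\HHc \cdot \det\DPrc$ and the precise cancellation of the nuisance-score contribution $\exp(\|\HHc^{-1}\score_{\etav}\|^{2}/2)$ inside $K$ against the $\etav$-integration normalizer, so that only the efficient score $\xivr$ remains to drive the indicator on the right-hand side; everything else is a reshuffling of bracketing, concentration via $\rho^{*}$, and the elementary bounds on $(1+x)^{\pm 1}$.
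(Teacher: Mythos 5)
Your proposal is correct and follows essentially the same route as the paper's proof: restrict the numerator to \( \Upsilons(\rups) \), apply the bracketing bound to pass from \( L \) to the quadratic process \( \L \), use \( \tilde{\rho}_{\ff} \) to compare the local Gaussian integral with the marginal one over \( \Thetas(\rups) \times \R^{\dimtotal - \dimp} \) (identified with \( \E\{\ff(\gammav)\Ind(\|\gammav+\xivr\|\le\rups)\} \) via the block/Schur-complement computation), and control the denominator through \( \rho^{*}(\rups) \) and \( \nub(\rups) \). The only differences are cosmetic bookkeeping choices, e.g. using \( (1+\tilde{\rho}_{\ff})^{-1} \ge \ex^{-2\tilde{\rho}_{\ff}} \) in place of the paper's \( 1-\tilde{\rho}_{\ff} \ge \ex^{-2\tilde{\rho}_{\ff}} \), and weakening \( +\nub \) to \( -\nub \) at the end rather than in the denominator bound.
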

  This result means that posterior measure can be bounded from below by the standard normal law up to (small) multiplicative and additive constants. 
  As a corollary, we state the result for quadratic and indicator functions \( \ff(\uv) \).
  \begin{corollary}
  \label{corollary: gaussExpectLowerQuadInd}
    For any \( \lambdav \in \R^{\dimp} \), it holds on \( \Omega_{\rups}(\xx) \)
    \begin{EQA}
        \Ec \bigl| \lambdav^{\T} \DPrc (\vthetav - \thetavd) \bigr|^{2}
        & \ge &
        \exp\bigl\{-\spread^{\ominus}(\rups, \xx) + \ex^{-\xx}\bigr\} \| \lambdav \|^{2}.
    \end{EQA}
    For any measurable set \( A \subseteq \RR^{\dimp}\), it holds on \( \Omega_{\rups}(\xx) \)
    \begin{EQA}
    \label{gaussProbLower}
      \Pc\bigl( \DPrc (\vthetav - \thetavb) \in A \bigr)
      & \ge &
      \exp \bigl\{ \spread^{-}(\rups,\xx) \bigr\} 
      \P\bigl( \gammav \in A \bigr) - \ex^{-\xx}.
    \end{EQA}
    Let \(\DPrd^{2}\) be a symmetric \(\dimp \times \dimp\) matrix such that \(\| \Id - \DPrd^{-1} \DPrc^{2} \DPrd^{-1}\| \le \alpha\) and let
    \(\hat{\thetav} \in \RR^{\dimp}\) be such that
    \(\|\DPrc(\thetavd - \hat{\thetav})\| \le \beta\). Define \( \deltav_{0} \eqdef \DPrd (\thetavd -  \hat{\thetav} ) \).
    Then for any measurable subset \( A \) in \( \R^{\dimp} \),
    it holds  on \( \Omega(\xx) \)
    \begin{EQA}
    \label{gaussProbUpperChangeMatrix}
      \Pc\bigl( \DPrd (\vthetav - \hat{\thetav}) \in A \bigr)
      & \ge &
      \exp \bigl\{ \spread^{-}(\rups, \xx) \bigr\}
      \P\bigl( \DPrd \DPrc^{-1} \gammav + \deltav_{0} \in A \bigr)
      - \ex^{-\xx}
      \\
      & \ge &
      \exp \bigl\{ \spread^{-}(\rups, \xx) \bigr\}
      \Bigl\{ \P\bigl( \gammav \in A \bigr) - \frac{1}{2} \sqrt{\alpha^{2} \dimp + (1 + \alpha)^{2} \beta^{2}}\Bigr\}
      -  \ex^{-\xx}.
    \end{EQA}
    On \( \Omega(\xx) \) one obtains
    \( \spread^{-}(\rups, \xx) \le 2 \spread(\rups, \xx) + 3 \ex^{-\xx} + 4 \exp\bigl\{\spread(\rups, \xx) + 4\ex^{-\xx} - \xx\bigr\}\).
  \end{corollary}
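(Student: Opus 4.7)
The plan is to derive each of the four assertions as a specialization of Theorem~\ref{theorem: gaussExpectLowerBound}, paired with control of the tail of the reference Gaussian law and estimates on the three correction terms $\nub(\rups)$, $\rho^{*}(\rups)$, and $\tilde{\rho}_{\ff}(\rups)$ that together form $\spread^{-}(\rups,\xx)$.

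For the quadratic lower bound I would apply Theorem~\ref{theorem: gaussExpectLowerBound} with $\ff(\uv) = |\lambdav^{\T}\uv|^{2}$. The resulting right-hand side equals $\exp\{-\spread^{-}(\rups,\xx)\}\bigl(\|\lambdav\|^{2} - R\bigr)$ where $R \eqdef \E\bigl\{ |\lambdav^{\T}\gammav|^{2} \Ind(\|\gammav+\xivr\| > \rups) \bigr\}$. On $\Omega(\xx)$ the score is controlled by $\|\xivr\| \le \qq(\BB,\xx)$ thanks to \eqref{scoreUpperBound}, and the chosen $\rups$ makes $\{\|\gammav\| > \rups - \qq(\BB,\xx)\}$ a deep Gaussian tail, giving $R \le \|\lambdav\|^{2} \ex^{-\xx}$; absorbing the factor $1 - \ex^{-\xx}$ into the exponent yields the claimed $\ex^{-\xx}$ correction. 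For the indicator version I would use $\ff(\uv) = \Ind(\uv \in A)$; the residual is then $\P(\|\gammav+\xivr\| > \rups) \le \ex^{-\xx}$, which appears additively after factoring out $\exp\{-\spread^{-}\} \le 1$.

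For the change-of-matrix statement I would rewrite
\begin{EQA}[c]
\DPrd(\vthetav - \hat{\thetav}) = \DPrd\DPrc^{-1} \cdot \DPrc(\vthetav - \thetavd) + \deltav_{0},
\end{EQA}
so that $\{\DPrd(\vthetav - \hat{\thetav}) \in A\}$ translates to $\{\DPrc(\vthetav - \thetavd) \in \tilde{A}\}$ with $\tilde{A} = \{\uv \colon \DPrd\DPrc^{-1}\uv + \deltav_{0} \in A\}$. Applying the preceding indicator bound to $\tilde{A}$ gives the first inequality in terms of $\P(\DPrd\DPrc^{-1}\gammav + \deltav_{0} \in A)$. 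The second inequality then reduces to a total-variation comparison between $\ND(\deltav_{0}, \DPrd\DPrc^{-2}\DPrd)$ and $\ND(0,\Id_{\dimp})$: using $\|\Id - \DPrd^{-1}\DPrc^{2}\DPrd^{-1}\| \le \alpha$ the covariance perturbation contributes a term of order $\alpha\sqrt{\dimp}/2$ via spectral estimates, and the mean shift contributes $(1+\alpha)\beta/2$ through $\|\deltav_{0}\| \le (1+\alpha)^{1/2}\|\DPrc(\thetavd - \hat{\thetav})\|$; combining the two contributions under a single square root produces the announced $\tfrac12\sqrt{\alpha^{2}\dimp + (1+\alpha)^{2}\beta^{2}}$.

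Finally, to bound $\spread^{-}(\rups,\xx)$ itself I would use $\nub(\rups) \le 2\ex^{-\xx}$ from Theorem~\ref{theorem: postConcentrFull}, $\rho^{*}(\rups) \le \exp\{2\spread(\rups,\xx) + 2\ex^{-\xx} - \xx\}$ from Corollary~\ref{corollary: postConcentrFullBound}, and an analogous ratio estimate for $\tilde{\rho}_{\ff}(\rups)$ applied to the purely quadratic process $\L(\upsilonv,\upsilonvs)$ in place of $L(\upsilonv,\upsilonvs)$, which delivers the term of order $\exp\{\spread(\rups,\xx) + 4\ex^{-\xx} - \xx\}$. The hard part is the uniform Gaussian comparison step in the third claim, since it must handle a covariance perturbation and a mean translation simultaneously and uniformly over all measurable $A$; the route I would take is a direct KL/Pinsker-type argument exploiting the spectral bound on $\Id - \DPrd^{-1}\DPrc^{2}\DPrd^{-1}$ together with the explicit form of $\deltav_{0}$, which separates cleanly into the $\alpha$- and $\beta$-contributions in the stated bound.
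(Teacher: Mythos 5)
Your proposal is correct and follows essentially the same route the paper intends: the paper gives no separate proof here, stating only that the argument mirrors Corollary~\ref{corollary: gaussExpectUpperQuadInd} (Gaussian tail control of the truncation term via \eqref{standGaussExpectQuadQuantBound} and the score bound \eqref{scoreUpperBound}) and Corollary~\ref{corollary: changeMatrixUpper} (rewriting the event and invoking the KL/Pinsker comparison of Lemma~\ref{lemma: kullbTwoNormals}), which is exactly what you do, together with the same bounds on \( \nub \), \( \rho^{*} \), and \( \tilde{\rho}_{\ff} \). The only discrepancies are sign/placement typos in the paper's own statement (e.g.\ \( +\ex^{-\xx} \) in the first exponent and the missing minus sign in \eqref{gaussProbLower}), which your derivation implicitly corrects.
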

  The proof of this corollary is similar to Corollary~\ref{corollary: changeMatrixUpper} and Corollary~\ref{corollary: gaussExpectUpperQuadInd}.

\section{Proofs}
\label{sec: proofs}
  This appendix collects the proofs of the results.

\subsection{Some inequalities for the normal law}
\label{sec: gaussInequalities}
  This section collects some simple but useful facts about the properties of the multivariate standard normal distribution.
  Many similar results can be found in the literature, we present the proofs to keep the presentation self-contained. 
  Everywhere in this section \( \gammav \) means a standard normal vector in \( \R^{\dimp} \).
  \begin{lemma}
    For any \( \uv \in \R^{\dimp} \),
    any unit vector \( \av \in \R^{\dimp} \), and any \( \qq > 0 \), it holds
    %with \( \qq^{2} \ge  2 \dimp + 2 \| \xiv \|^{2} \) 
    \begin{EQA}
    \label{standGaussProbQuantBound}
      \P\bigl( \| \gammav - \uv \| \ge \qq \bigr)
      & \leq &
      \exp\bigl\{ - \qq^{2}/4 + \dimp/2 + \| \uv \|^{2}/2 \bigr\} ,
      \\
      \E \bigl\{ | \gammav^{\T} \av |^{2} \Ind\bigl( \| \gammav - \uv \| \ge \qq \bigr) \bigr\}
      & \leq &
      (2 + |\uv^{\T} \av|^{2}) \exp\bigl\{ - \qq^{2}/4 + \dimp/2 + \| \uv \|^{2}/2 \bigr\} .
    \label{standGaussExpectQuadQuantBound}
    \end{EQA}
  \end{lemma}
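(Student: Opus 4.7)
The plan is a straightforward exponential Chernoff (tilted Markov) argument, applied at the tilt \( \lambda = 1/4 \) which is the natural choice because it produces a shifted Gaussian integral that remains finite (any \( \lambda < 1/2 \) would do; \( 1/4 \) gives the cleanest constants). First I would write
\begin{EQA}[c]
\P\bigl( \|\gammav - \uv\| \ge \qq \bigr)
\le
\ex^{-\qq^{2}/4} \, \E \exp\bigl\{ \|\gammav - \uv\|^{2}/4 \bigr\}
\end{EQA}
and reduce the right hand side to a product over coordinates, since \( \|\gammav - \uv\|^{2} = \sum_{i} (\gamma_{i} - u_{i})^{2} \) and the \( \gamma_{i} \) are independent standard normals. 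A one-dimensional Gaussian integral, done by completing the square in the exponent \( -x^{2}/2 + (x - u_{i})^{2}/4 = -(x + u_{i})^{2}/4 + u_{i}^{2}/2 \), gives \( \E \exp\{(\gamma_{i} - u_{i})^{2}/4\} = \sqrt{2}\, \ex^{u_{i}^{2}/2} \), hence
\begin{EQA}[c]
\E \exp\bigl\{ \|\gammav - \uv\|^{2}/4 \bigr\}
=
2^{\dimp/2} \ex^{\|\uv\|^{2}/2}
\le
\ex^{\dimp/2 + \|\uv\|^{2}/2} ,
\end{EQA}
using \( \log 2 < 1 \). This yields the first inequality \eqref{standGaussProbQuantBound}.

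For the second bound \eqref{standGaussExpectQuadQuantBound} I would apply the same tilting with the extra weight \( |\gammav^{\T} \av|^{2} \):
\begin{EQA}[c]
\E \bigl\{ |\gammav^{\T} \av|^{2} \Ind( \|\gammav - \uv\| \ge \qq ) \bigr\}
\le
\ex^{-\qq^{2}/4} \, \E\bigl\{ |\gammav^{\T} \av|^{2} \exp\bigl( \|\gammav - \uv\|^{2}/4 \bigr) \bigr\} .
\end{EQA}
Because the standard normal is rotationally invariant and \( \av \) is a unit vector, I may change coordinates so that \( \av = \epv_{1} \); only \( u_{1} = \uv^{\T} \av \) is relevant for the quadratic factor, while the remaining coordinates separate and contribute the product \( 2^{(\dimp-1)/2} \ex^{(\|\uv\|^{2} - u_{1}^{2})/2} \) exactly as above. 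The first-coordinate integral reduces, after the same completion-of-square, to evaluating
\( \int (y^{2} - 2 y u_{1} + u_{1}^{2}) \ex^{-y^{2}/4} \, dy \) with \( y = x + u_{1} \); the odd term vanishes and the remaining two Gaussian moments give \( 2\sqrt{\pi}(2 + u_{1}^{2}) \), hence
\begin{EQA}[c]
\E\bigl\{ \gamma_{1}^{2} \exp\bigl( (\gamma_{1} - u_{1})^{2}/4 \bigr) \bigr\}
=
\sqrt{2}\,(2 + u_{1}^{2}) \ex^{u_{1}^{2}/2} .
\end{EQA}
Multiplying by the product over the other coordinates and using \( 2^{\dimp/2} \le \ex^{\dimp/2} \) yields the weighted MGF bound \( 2^{\dimp/2} (2 + (\uv^{\T} \av)^{2}) \ex^{\|\uv\|^{2}/2} \), and combining with the \( \ex^{-\qq^{2}/4} \) prefactor gives the claim.

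There is no real obstacle: both bounds reduce to the same one-dimensional Gaussian integral, once with weight \( 1 \) and once with weight \( x^{2} \). The only point that requires attention is to choose the tilt \( \lambda = 1/4 \) and the quadratic rewriting \( (x - u)^{2}/4 - x^{2}/2 = -(x + u)^{2}/4 + u^{2}/2 \) so that the exponent in \( \uv \) comes out cleanly as \( \|\uv\|^{2}/2 \) and the Gaussian normalization produces the clean factor \( 2^{\dimp/2} \le \ex^{\dimp/2} \).
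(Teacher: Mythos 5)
Your proposal is correct and follows essentially the same route as the paper: an exponential Chebyshev/tilting bound with tilt \( 1/4 \) on \( \|\gammav-\uv\|^{2} \) (the paper writes this as \( \lambda\|\gammav-\uv\|^{2}/2 \) with \( \lambda=1/2 \)), followed by the Gaussian moment-generating-function computation and \( 2^{\dimp/2}\le\ex^{\dimp/2} \). Your coordinate-wise completion of the square and the rotation to \( \av=\epv_{1} \) simply make explicit the weighted Gaussian integral that the paper asserts without detail, and all your intermediate constants check out.
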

  \begin{proof}
    By the exponential Chebyshev inequality, for any \( \lambda < 1 \) 
    \begin{EQA}
      \P\bigl( \| \gammav - \uv \| \ge \qq \bigr)
      & \leq &
      \exp \bigl( - \lambda \qq^{2}/2 \bigr) \E \exp\bigl( \lambda \| \gammav - \uv \|^{2}/2 \bigr)
      \\
      &=&
      \exp\Bigl\{ 
        - \frac{\lambda \qq^{2}}{2} - \frac{\dimp}{2} \log(1 - \lambda) 
        + \frac{\lambda}{2 (1-\lambda)} \| \uv \|^{2}
      \Bigr\}.
    \end{EQA}
    In particular, with \( \lambda = 1/2 \), this implies \eqref{standGaussProbQuantBound}.
    Further, for \( \| \av \| = 1 \)
    \begin{EQA}
      \E \bigl\{ | \gammav^{\T} \av |^{2} \Ind(\| \gammav - \uv \| \ge \qq) \bigr\}
      & \leq &
      \exp \bigl( - \qq^{2}/4 \bigr) 
      \E \bigl\{ | \gammav^{\T} \av |^{2} \exp\bigl( \| \gammav - \uv \|^{2}/4 \bigr) \bigr\}
      \\
      & \leq &
      (2 + | \uv^{\T} \av |^{2}) \exp \bigl( - \qq^{2}/4 + \dimp/2 + \| \uv \|^{2}/2 \bigr) 
    \label{вЂў}
    \end{EQA}
    and \eqref{standGaussExpectQuadQuantBound} follows.
  \end{proof}

  The next result explains the concentration effect for the norm \( \| \xiv \|^{2} \) of a Gaussian vector.
  We use a version from \cite{Sp2011}.
  \begin{lemma}
  \label{lemma: standGaussQuant}
    For each \( \xx \),
    \begin{EQA}[rclcl]
      \P\bigl( \| \gammav \| \ge \qq(\dimp,\xx) \bigr)
      & \leq &
      \exp\bigl( - \xx \bigr) ,
      \qquad
      \P\bigl( \| \gammav \| \leq \qq_{1}(\dimp,\xx) \bigr)
      & \leq &
      \exp\bigl( - \xx \bigr) ,
    \end{EQA}
    where 
    \begin{EQA}
      \qq^{2}(\dimp,\xx) 
      & \eqdef &
      \dimp + \sqrt{6.6 \dimp \xx} \vee (6.6 \xx) ,
      \qquad
      \qq_{1}^{2}(\dimp,\xx) \eqdef \dimp - 2 \sqrt{\dimp \xx}.
    \end{EQA}
  \end{lemma}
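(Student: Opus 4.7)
The plan is to identify $\|\gammav\|^{2}$ as a $\chi^{2}_{\dimp}$ random variable and apply the exponential Chebyshev inequality. Using the moment generating function $\E\exp(\lambda\|\gammav\|^{2}/2) = (1-\lambda)^{-\dimp/2}$, valid for $\lambda < 1$, the upper tail bound becomes
\begin{EQA}[c]
\P\bigl(\|\gammav\| \geq \qq\bigr) \leq \exp(-\lambda \qq^{2}/2)(1-\lambda)^{-\dimp/2}, \qquad \lambda \in (0,1).
\end{EQA}
After taking logarithms, the task reduces to producing, for the prescribed $\qq^{2} = \dimp + \sqrt{6.6\dimp\xx} \vee (6.6\xx)$, some $\lambda \in (0,1)$ satisfying $\lambda \qq^{2}/2 + (\dimp/2)\log(1-\lambda) \geq \xx$.

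Two regimes arise naturally, separated by the threshold $\dimp = 6.6\xx$. In the moderate deviation regime $\dimp \geq 6.6\xx$, where $\qq^{2} - \dimp = \sqrt{6.6\dimp\xx}$ is small compared to $\dimp$, I would take $\lambda$ of order $\sqrt{\xx/\dimp}$ and expand $-\log(1-\lambda) \geq \lambda + \lambda^{2}/2$, reducing the constraint to a quadratic in $\lambda$ whose optimum near $\lambda \approx (\qq^{2}-\dimp)/\dimp$ delivers the $\sqrt{6.6\dimp\xx}$ term (the constant $6.6$ is somewhat larger than the Laurent--Massart value $4$ in order to absorb higher-order remainders). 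In the large deviation regime $\dimp < 6.6\xx$, where $\qq^{2} - \dimp = 6.6\xx$ dominates, I would choose $\lambda$ bounded away from $1$ by an amount of order $\dimp/\qq^{2}$; in this range $(\dimp/2)|\log(1-\lambda)|$ balances $\lambda\qq^{2}/2 - \xx$ and closes the estimate. The two regimes glue at $\dimp = 6.6\xx$, where both expressions for $\qq^{2} - \dimp$ coincide, giving a single formula valid for all $\xx$.

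For the lower tail, I would repeat the template with negative sign, i.e.\ use $\P(\|\gammav\|^{2} \leq \qq_{1}^{2}) \leq \exp(\lambda \qq_{1}^{2}/2)(1+\lambda)^{-\dimp/2}$ for $\lambda > 0$. Because there is no singularity on this side, only one regime is needed: the inequality $\log(1+\lambda) \geq \lambda - \lambda^{2}/2$ turns the constraint into a quadratic in $\lambda$, optimized at $\lambda^{\star} = (\dimp - \qq_{1}^{2})/\dimp$, which yields the sharper constant $2$ in $\qq_{1}^{2} = \dimp - 2\sqrt{\dimp\xx}$.

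The entire argument is the standard Laurent--Massart-style chi-squared concentration, so the only obstacle is purely computational: carefully tracking the constant $6.6$ through the two-regime optimization on the upper tail and verifying that the chosen $\lambda$ always lies in $(0,1)$. The particular version stated here is due to \cite{Sp2011}, and my plan is simply to reproduce that computation along the lines sketched above.
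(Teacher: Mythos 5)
Your route is the standard Cram\'er--Chernoff (Laurent--Massart style) argument, and it is essentially the only natural one here: the paper itself gives no proof of this lemma, deferring to \cite{Sp2011}, and the immediately preceding lemma in the paper is proved by exactly the same exponential Chebyshev device with the moment generating function of \( \| \gammav - \uv \|^{2} \). Your sketch is correct and the constants do close: with the optimal \( \lambda = (\qq^{2}-\dimp)/\qq^{2} \) the upper-tail exponent reduces to checking \( s - \log(1+s) \ge s^{2}/3.3 \) for \( s = \sqrt{6.6\xx/\dimp} \le 1 \) in the moderate regime and \( \log(1+t) \le 0.697\, t \) for \( t = 6.6\xx/\dimp \ge 1 \) in the large-deviation regime, both of which hold with almost no slack at the crossover \( \dimp = 6.6\xx \) (where \( 1 - \log 2 \approx 0.3069 \) versus \( 2/6.6 \approx 0.303 \)); so \( 6.6 \) is calibrated tightly rather than chosen generously to ``absorb remainders.'' The lower tail follows from \( \log(1-u) \le -u - u^{2}/2 \) with \( u = 2\sqrt{\xx/\dimp} \), exactly as you indicate.
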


  The next lemma bounds from above the Kullback-Leibler divergence between 
  two normal distributions. 
  \begin{lemma}
  \label{lemma: kullbTwoNormals}
    Let \( \P_{0} = \ND(0,\Id_{\dimp}) \) and \( \P_{1} = \ND(\betav,(\DD^{\T} \DD)^{-1}) \)
    for some non-degenerated matrix \( \DD \).
    If 
    \begin{EQA}
      \| \DD^{\T} \DD - \Id_{\dimp} \| 
      & \leq &
      \rd \leq 1/2,
    \end{EQA}
    then 
    \begin{EQA}
      2 \kullb(\P_{0},\P_{1})
      &=&
      - 2 \E_{0} \log \frac{d\P_{1}}{d\P_{0}}
      \\
      & \leq &
      \tr (\DD^{\T} \DD - \Id_{\dimp})^{2} + (1 + \rd) \| \betav \|^{2}
      \leq  
      \rd^{2} \, \dimp + (1 + \rd) \| \betav \|^{2}.
    \end{EQA}
    For any measurable set \( A \subset \R^{\dimp} \), it holds with 
    \( \gammav \sim \ND(0,\Id_{\dimp}) \)
    \begin{EQA}[c]
      \bigl| \P_{0}(A) - \P_{1}(A) \bigr|
      =
      \bigl| \P\bigl( \gammav \in A \bigr)
      - \P\bigl( \DD (\gammav - \betav) \in A \bigr) \bigr| 
      \leq 
      \sqrt{\kullb(\P_{0},\P_{1}) / 2}.
    \end{EQA}
  \end{lemma}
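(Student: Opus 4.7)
My plan is to compute the KL divergence between the two Gaussians in closed form and then bound the resulting trace and log-determinant terms via a one-dimensional spectral inequality; the total-variation statement then follows from Pinsker's inequality. The standard formula for the KL divergence between two multivariate normals gives, with $\mu_{0}=0$, $\Sigma_{0}=\Id_{\dimp}$, $\mu_{1}=\betav$, and $\Sigma_{1}=(\DD^{\T}\DD)^{-1}$,
\begin{EQA}[c]
  2\kullb(\P_{0},\P_{1})
  =
  \tr(\DD^{\T}\DD) - \dimp + \betav^{\T}\DD^{\T}\DD\,\betav - \log\det(\DD^{\T}\DD).
\end{EQA}
Introducing $M \eqdef \DD^{\T}\DD - \Id_{\dimp}$, which is symmetric with $\|M\| \le \rd \le 1/2$, this rearranges into $\bigl\{\tr(M) - \log\det(\Id_{\dimp}+M)\bigr\} + \|\betav\|^{2} + \betav^{\T} M \betav$.

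The heart of the argument is the scalar inequality $\mu - \log(1+\mu) \le \mu^{2}$ for $|\mu| \le 1/2$. I would verify this by setting $f(\mu) \eqdef \mu^{2} - \mu + \log(1+\mu)$, observing $f(0)=f'(0)=0$, and splitting into the two subintervals $\mu \in [0,1/2]$ (where the Taylor expansion of $\log(1+\mu)$ is alternating and in fact gives the stronger bound $\mu - \log(1+\mu) \le \mu^{2}/2$) and $\mu \in [-1/2,0]$ (where, setting $y = -\mu$, one has $-y - \log(1-y) = \sum_{k\ge 2} y^{k}/k \le y^{2}/[2(1-y)] \le y^{2}$). Diagonalizing $M$ and applying this estimate eigenvalue by eigenvalue yields $\tr(M) - \log\det(\Id_{\dimp}+M) \le \tr(M^{2}) = \tr\bigl\{(\DD^{\T}\DD - \Id_{\dimp})^{2}\bigr\}$, and $\tr(M^{2}) \le \dimp\,\|M\|^{2} \le \rd^{2}\dimp$ completes the trace contribution. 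For the mean term, $|\betav^{\T} M \betav| \le \|M\|\,\|\betav\|^{2} \le \rd\,\|\betav\|^{2}$, hence $\|\betav\|^{2} + \betav^{\T} M \betav \le (1+\rd)\|\betav\|^{2}$. Combining gives the claimed KL bound.

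For the second assertion I would invoke Pinsker's inequality $\|\P_{0}-\P_{1}\|_{\mathrm{TV}} \le \sqrt{\kullb(\P_{0},\P_{1})/2}$, together with the elementary bound $|\P_{0}(A) - \P_{1}(A)| \le \|\P_{0}-\P_{1}\|_{\mathrm{TV}}$ for any measurable $A$. The equality in the lemma amounts to representing $\P_{1}$ as the pushforward of $\P_{0}$ under an affine map: if $\gammav \sim \ND(0,\Id_{\dimp})$ then $\DD^{-1}\gammav + \betav \sim \ND(\betav,(\DD^{\T}\DD)^{-1}) = \P_{1}$, so $\P_{1}(A) = \P(\DD^{-1}\gammav + \betav \in A)$; equivalently, the change of variable $\thetav \mapsto \DD(\thetav - \betav)$ maps $\P_{1}$ onto $\P_{0}$.

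The main technical point is the scalar inequality $\mu - \log(1+\mu) \le \mu^{2}$, which crucially uses $\rd \le 1/2$; the constant would deteriorate as $\rd$ approaches $1$. All the remaining steps are routine linear algebra or standard information theory, so I do not anticipate any deeper obstacle.
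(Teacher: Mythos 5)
Your proposal is correct and follows essentially the same route as the paper: the closed-form Gaussian KL divergence, an eigenvalue-by-eigenvalue application of the scalar bound \( a - \log(1+a) \le a^{2} \) for \( |a| \le 1/2 \) to control \( \tr(M) - \log\det(\Id_{\dimp}+M) \) by \( \tr(M^{2}) \), the bound \( \betav^{\T}\DD^{\T}\DD\betav \le (1+\rd)\|\betav\|^{2} \), and Pinsker's inequality. Your explicit verification of the scalar inequality and of the affine pushforward identity only fills in details the paper leaves implicit.
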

  \begin{proof}
    It holds %with \( \BB \eqdef \DPc^{-1} \DP^{2} \DPc^{-1} \) and 
    %\( \betav \eqdef \DPc (\thetav_{1} - \thetav_{0}) \)
    \begin{EQA}[c]
      2 \log \frac{d\P_{1}}{d\P_{0}}(\gammav)
      =
      \log \det (\DD^{\T} \DD) 
      - (\gammav - \betav)^{\T} \DD^{\T} \DD (\gammav - \betav)  
      + \| \gammav \|^{2}
    \end{EQA}
    with \( \gammav \) standard normal and
    \begin{EQA}
      2 \kullb(\P_{0},\P_{1})
      &=&
      - 2 \E_{0} \log \frac{d\P_{1}}{d\P_{0}}
      =
      - \log \det (\DD^{\T} \DD) 
      + \tr (\DD^{\T} \DD - \Id_{\dimp}) 
      + \betav^{\T} \DD^{\T} \DD \betav .
    \end{EQA}
    Let \( a_{j} \) be the \( j \)th eigenvalue of \( \DD^{\T} \DD - \Id_{\dimp} \).  
    \( \| \DD^{\T} \DD - \Id_{\dimp} \| \leq \rd \leq 1/2 \) yields 
    \( |a_{j}| \le 1/2 \) and 
    \begin{EQA}
      2 \kullb(\P_{0},\P_{1})
      &=&
        \betav^{\T} \DD^{\T} \DD \betav 
      +
        \sum_{j=1}^{\dimp} \bigl\{ a_{j} - \log(1 + a_{j}) \bigr\}
%      \\
%      & \leq &
      \leq 
      (1 + \rd) \| \betav \|^{2} 
      + \sum_{j=1}^{\dimp} a_{j}^{2} 
      \\
      & \leq &
      (1 + \rd) \| \betav \|^{2} + \tr (\DD^{\T} \DD - \Id_{\dimp})^{2} 
      \leq 
      (1 + \rd) \| \betav \|^{2} + \rd^{2} \, \dimp.
    \end{EQA}
    This implies by Pinsker's inequality 
    \begin{EQA}
      \sup_{A} | \P_{0}(A) - \P_{1}(A) |
      & \leq &
      \sqrt{\frac{1}{2} \kullb(\P_{0},\P_{1})}
    \end{EQA}
    as required.
  \end{proof}

\subsection{Proof of Theorem~\ref{theorem: postConcentrFull}}
\label{sec: proofPostConcentrFull}
  Define \( \pnnd(\upsilonv) = \gmi \, \| \DFc (\upsilonv - \upsilonvs) \|^{2} / 2 \).
  Now, by a change of variables, one obtains
  \begin{EQA}
    && \nquad
    \frac{\gmi^{\dimtotal/2} \det(\DFc)}{(2 \pi)^{\dimtotal/2}}
    \int_{\Upsilon \setminus \Upsilons(\rups)}
        \exp \bigl\{ -\pnnd(\upsilonv)  \bigr\} d \upsilonv
    \\
    & \le &
    \frac{\gmi^{\dimtotal/2} \det(\DFc)}{(2 \pi)^{\dimtotal/2}}
    \int_{\Upsilon \setminus \Upsilons(\rups)}
        \exp \bigl\{ - \gmi \, \| \DFc (\upsilonv - \upsilonvs) \|^{2} / 2  \bigr\} d \upsilonv
    =
    \P\bigl( \| \gammav \|^{2} \ge \gmi \rups^{2} \bigr).
  \end{EQA}
  For the integral in the nominator of \eqref{concentrBvmTarget}, it holds on \( \Omega(\xx) \)
  by \eqref{excessUpperFunc}
  \begin{EQA}[c]
    \int_{\Upsilon \setminus \Upsilons(\rups)} \exp \bigl\{ L(\upsilonv,\upsilonvs) \bigr\} \, d \upsilonv
    \le
    \int_{\Upsilon \setminus \Upsilons(\rups)} \exp \bigl\{ - \pnnd(\upsilonv) \bigr\} \, d \upsilonv.
  \end{EQA}
  For integral in the denominator it holds
  \begin{EQA}
    && \nquad
    \int_{\Upsilons(\rups)} \exp \bigl\{ L(\upsilonv, \upsilonvs)\bigr\} \, d\upsilonv
    \\
    & \ge &
    \exp\{ -\spread(\rups, \xx) - \Cc(\xiv) \} \,
    \int_{\Upsilons(\rups)}
    \exp \bigl\{\L(\upsilonv, \upsilonvs) + \Cc(\xiv)\bigr\} \, d\upsilonv.
    \qquad
  \label{gaussExcessIntLowerPrelim}
  \end{EQA}
  Inequality \eqref{gaussExcessIntLowerPrelim} implies by definition of \(\nub(\rups)\):
  \begin{EQA}[l]
    \int_{\Upsilons(\rups)} \exp\{L(\upsilonv, \upsilonvs) \} \, d \upsilonv
    \ge \exp \bigl\{-\spread(\rups, \xx) - \Cc(\xiv) - \nub(\rups)\bigr\}.
  \label{gaussExcessIntLower}
  \end{EQA}
  The bound \eqref{gaussExcessIntLower}  for the local integral
  \( \int_{\Upsilons(\rups)} \exp \bigl\{ L(\upsilonv,\upsilonvs) \bigr\} d \upsilonv \) implies that
  \begin{EQA}
    \rho^*(\rups)
    & \le &
    \exp\bigl\{ \spread(\rups, \xx) + \nub(\rups) + \Cc(\xiv) \bigr\}
        \int_{\Upsilon \setminus \Upsilons(\rups)} \exp \bigl\{ - \pnnd(\upsilonv) \bigr\} d \upsilonv.
  \end{EQA}
  Finally
  \begin{EQA}[c]
    \exp\bigl\{ \Cc(\xiv) \bigr\}
    =
    \exp\bigl\{- \| \xiv \|^{2}/2 \bigr\} \, (2\pi)^{-\dimtotal/2} \, \det(\DFc)
    \le
    (2\pi)^{-\dimtotal/2} \, \det(\DFc)
  \label{exCcmxivm}
  \end{EQA}
  and the assertion \eqref{postConcentrFull} follows.
  The bound \eqref{logProbLocalBound} is also straightforward
  \begin{EQA}
    \nub(\rups)
    & = &
    - \log \P\bigl(
        \| \gammav + \xiv \| \le \rups
        \cond \Yv
    \bigr)
    \le 
    - \log \P\bigl(
      \|\gammav\| + \|\xiv\| \le \rups
      \cond \Yv
    \bigr)
    \\
    & \le &
    - \log \P\bigl(
      \|\gammav\| \le \qq(\dimtotal, \xx)
      \cond \Yv
    \bigr)
    \le 
    2 \ex^{-\xx}.
  \end{EQA}

\subsection{Proof of Theorem~\ref{theorem: postConcentrTarget}}
\label{sec: proofPostConcentrTarget}
  Obviously \( \bigl\{ \thetav \notin \Thetas(\rups), \, \upsilonv \in \Upsilon\bigr\}
  	\subset 
  	\bigl\{\Upsilon \setminus \Upsilons(\rups)\bigr\} 
  \).
  Therefore, it holds for the integral in the nominator of \eqref{concentrBvmTarget} in a view of \eqref{excessUpperFunc}
  \begin{EQA}[c]
    \int_{\Upsilon} \exp \bigl\{L(\upsilonv, \upsilonvs) \bigr\}  \Ind\bigl\{\thetav \notin
    \Thetas(\rups)\bigr\} \, d \upsilonv
    \le
    \int_{\Upsilon \setminus \Upsilons(\rups)} \exp \bigl\{L(\upsilonv, \upsilonvs) \bigr\} \, d \upsilonv.
  \end{EQA} 
  For the denominator, the inclusion \( \Upsilons(\rups) \subset \bigl\{\thetav \in \Thetas(\rups), \upsilonv \in \Upsilon\bigr\}\) and \eqref{excessUpperFunc} imply
  \begin{EQA}[c]
    \int_{\Upsilon} \exp \bigl\{L(\upsilonv, \upsilonvs) \bigr\}  \Ind\bigl\{\thetav \in
    \Thetas(\rups)\bigr\} \, d \upsilonv
    \ge
    \int_{\Upsilons(\rups)} \exp \bigl\{L(\upsilonv, \upsilonvs) \bigr\} \, d \upsilonv.
  \end{EQA}
  Finally
  \begin{EQA}[c]
    \rho(\rups)
    =
    \frac{\int_{\Upsilon} \exp \bigl\{L(\upsilonv, \upsilonvs) \bigr\}
    	  \Ind\bigl\{\thetav \notin \Thetas(\rups)\bigr\} d \upsilonv}
    	 {\int_{\Upsilon} \exp \bigl\{L(\upsilonv, \upsilonvs) \bigr\}
	 		\Ind\bigl\{\thetav \in \Thetas(\rups)\bigr\} d \upsilonv}
    \le
    \frac{\int_{\Upsilon \setminus \Upsilons(\rups)}
    		\exp \bigl\{L(\upsilonv, \upsilonvs) \bigr\} d \upsilonv}
    	 {\int_{\Upsilons(\rups)} \exp \bigl\{L(\upsilonv, \upsilonvs) \bigr\} d \upsilonv}
    =
    \rho^{*}(\rups),
  \end{EQA}
  and the assertion follows from Theorem~\ref{theorem: postConcentrFull}.

\subsection{Proof of Theorem~\ref{theorem: gaussExpectUpperBound}}
\label{sec: proofGaussExpectUpperBound}
  We use that
  \(\L(\upsilonv, \upsilonvs) = \xiv^{\T} \DFc (\upsilonv - \upsilonvs) - \|\DFc (\upsilonv - \upsilonvs)\|^{2} / 2\)
  is proportional to the density of a Gaussian distribution.
  More precisely, define
  \begin{EQA}
    \Cc(\xiv)
    & \eqdef &
    - \|\xiv\|^{2}/2 + \log (\det \DFc) - \dimtotal \log(\sqrt{2\pi}) .
  \end{EQA}
  Then
  \begin{EQA}
%    && \nquad
    \Cc(\xiv) + \L(\upsilonv, \upsilonvs)
%    \\
    &=&
    - \|\DFc (\upsilonv - \upsilonvs) - \xiv\|^{2}/2
    + \log(\det \DFc) - \dimtotal \log(\sqrt{2\pi})
    \qquad
  \end{EQA}
  is (conditionally on \(\Yv\)) the log-density of the normal law
  with the mean \(\upsilonv_{0} = \upsilonvs + \DFc^{-1} \xiv\) and the covariance matrix \(\DFc^{-2}\).
  If we perform integration and leave only  \(\thetav\) part of \(\upsilonv\) then \(\Cc(\xiv) + \L(\upsilonv, \upsilonvs) \)  is (conditionally on \(\Yv\)) the log-density of the normal law with the mean \(\thetavd = \DPrc^{-1} \xivr + \thetavs\) and the covariance matrix \(\DPrc^{-2}\). So, for any nonnegative function \(\ff: \RR^{\dimp} \to \RR_{+}\) we get
  \begin{EQA}
    && \nquad
    \int_{\Upsilon} \exp \bigl\{L(\upsilonv, \upsilonvs) + \Cc(\xiv) \bigr\} \,
        \ff\bigl(\DPrc (\thetav - \thetavd)\bigr) \, d\upsilonv
    \\
    & = &
    \int_{\Upsilons(\rups)} \exp \bigl\{L(\upsilonv, \upsilonvs) + \Cc(\xiv) \bigr\} \,
        \ff\bigl(\DPrc (\thetav - \thetavd\bigr) \, d\upsilonv
    \\
    &&
    + \, \int_{\Upsilon \setminus \Upsilons(\rups)} \exp \bigl\{L(\upsilonv, \upsilonvs) + \Cc(\xiv) \bigr\} \,
        \ff\bigl(\DPrc (\thetav - \thetavd)\bigr) \, d\upsilonv
    \\
    & = &
    \bigl(1 + \rho_{\ff}(\rups)\bigr)\int_{\Upsilons(\rups)}
    \exp \bigl\{L(\upsilonv, \upsilonvs) + \Cc(\xiv) \bigr\} \,
        \ff\bigl(\DPrc (\thetav - \thetavd)\bigr) \, d\upsilonv
    \\
    & \le &
    \ex^{\spread(\rups, \xx) + \rho_{\ff}(\rups)}
    \int_{\Upsilons(\rups)} \exp \bigl\{\L(\upsilonv, \upsilonvs) + \Cc(\xiv) \bigr\} \,
        \ff\bigl(\DPrc (\thetav - \thetavd)\bigr) \, d\upsilonv
    \\
    &\le&
    \ex^{\spread(\rups, \xx) + \rho_{\ff}(\rups)}
    \int_{\RR^{\dimtotal}} \exp \bigl\{\L(\upsilonv, \upsilonvs) + \Cc(\xiv) \bigr\} \,
        \ff\bigl(\DPrc (\thetav - \thetavd)\bigr) \, d\upsilonv
    \\
    &=&
    \ex^{\spread(\rups, \xx) + \rho_{\ff}(\rups)} \,\, \E \ff(\gammav).
  \end{EQA}
  Thus,
  \begin{EQA}
%    && \nquad
    \int_{\Upsilon} \exp \bigl\{L(\upsilonv, \upsilonvs) \bigr\} \,
        \ff\bigl(\DPrc (\thetav - \thetavd)\bigr) \, d\upsilonv
%    \\
    & \le &
    \exp\{\spread(\rups, \xx) - \Cc(\xiv) + \rho_{\ff}(\rups)\} \, \E \ff(\gammav).
    \qquad
  \label{gaussExpectUpperNotNorm}
  \end{EQA}
  Now \eqref{gaussExpectUpperNotNorm} and \eqref{gaussExcessIntLower} imply
  \begin{EQA}
%    && \nquad
    \frac{
      \int_{\Upsilon} \exp \bigl\{L(\upsilonv, \upsilonvs) \bigr\}
      \ff\bigl(\DPrb (\thetav - \thetav_{\rd})\bigr) \, d\upsilonv
    }
    {
      \int_{\Upsilon} \exp \bigl\{L(\upsilonv, \upsilonvs) \bigr\} \, d\upsilonv
    }
%    \\
    & \le &
    \exp\bigl\{2 \spread(\rups, \xx) + \nub(\rups) + \rho_{\ff}(\rups)\bigr\} \, \E \ff(\gammav)
  \end{EQA}
  and \eqref{gaussExpectUpper} follows by definition of \(\spread^{+}(\rups, \xx)\).

\subsection{Proof of Corollary~\ref{corollary: gaussExpectUpperQuadInd}}
\label{sec: proofCorollaryGaussExpectUpperQuadInd}
  As a direct implication of \eqref{gaussExpectUpper} one easily gets
  \begin{EQA}
    \Ec \bigl| \lambdav^{\T} \DPrc (\vthetav - \thetavd) \bigr|^{2}
    & \le &
    \exp(\spread^{+}(\rups, \xx)) \| \lambdav \|^{2}.
  \end{EQA}
  The only important step is to show that \(\rho_{x^{2}}(\rups)\) is small.
  Denote
  \begin{EQA}[c]
    \lambdav_{0}
    =
    \DFc^{-1}
    \left(
      \begin{array}{cc}
        \DPrc \lambdav \\
        \mathbf{0}
      \end{array}
    \right)
  \end{EQA}
  and \(\mathbf{0}\) is a zero vector of dimension \((\dimtotal - \dimp)\).
  We proceed separately with the nominator and denominator. 
 For the nominator by \eqref{standGaussProbQuantBound} and \eqref{standGaussExpectQuadQuantBound}
  on \(\Omega(\xx)\)
  \begin{EQA}
    && \nquad
    \int_{\Upsilon \setminus \Upsilons(\rups)}
    \bigl|\lambdav^{\T} \DPrc (\thetav - \thetavd)\bigr|^{2} \,
    \exp\bigl\{L(\upsilonv, \upsilonvs)\bigr\} \, \, d\upsilonv
    \\
    & \le &
    \int_{\Upsilon \setminus \Upsilons(\rups)}
    \bigl|\lambdav^{\T} \DPrc (\thetav - \thetavd)\bigr|^{2} \,
    \exp \bigl\{ - \gmi \| \DFc (\upsilonv - \upsilonvs) \|^{2}/2 \bigr\} \, d\upsilonv
    \\
    & = &
    \int_{\Upsilon \setminus \Upsilons(\rups)}
    \bigl|\lambdav_{0}^{\T} \DFc (\upsilonv - \upsilonv_{0})\bigr|^{2} \,
    \exp \bigl\{ - \gmi \| \DFc (\upsilonv - \upsilonvs) \|^{2} / 2  \bigr\} d\upsilonv
    \\
    & = &
    \exp\bigl\{(-(\dimtotal/2 + 2) \log \gmi - \log (\det \DFc) + \dimtotal \log(\sqrt{2\pi}) \bigr\}
    \E \bigl|\lambdav_{0}^{\T} (\gammav + \xiv)\bigr|^{2} \,
    \Ind(\| \gammav \|^{2} \ge \gmi \rups^{2})
    \\
    & \le &
    (4 + 2\|\xiv\|^{2}) \exp\Bigl\{-(\dimtotal/2 + 2) \log \gmi - \log (\det \DFc) + \dimtotal \log(\sqrt{2\pi}) - {\gmi \rups^{2}}/{4} +{\dimtotal}/{2}\Bigr\}
    \bigl\|\lambdav_{0}\bigr\|^{2}
    \\
    & = &
    \exp\Bigl\{2 \log 2 + \log (1 + \|\xiv\|^{2}/2) - (\dimtotal/2 + 2) \log \gmi - \log (\det \DFc) + \dimtotal \log(\sqrt{2\pi}) -{\gmi \rups^{2}}/{4} +{\dimtotal}/{2}\Bigr\}
    \bigl\|\lambdav_{0}\bigr\|^{2}
    \\
    & \le &
    \exp\Bigl\{\|\xiv\|^{2}/2 + (\dimtotal/2 + 2) \log (\ex/\gmi) - \log (\det \DFc) + \dimtotal \log(\sqrt{2\pi}) -{\gmi \rups^{2}}/{4} \Bigr\}
    \bigl\|\lambdav_{0}\bigr\|^{2}
    \\
    & \le &
    \exp\bigl\{ - \log (\det \DFc) + \dimtotal \log(\sqrt{2\pi}) - \xx\bigr\}
    \bigl\|\lambdav_{0}\bigr\|^{2}
  \end{EQA}
  for \(\gmi \rups^{2} \ge (2 \dimtotal + 4) \log (\ex/\gmi) + 2 \qq(\BB, \xx) + 4 \xx\).
  For the denominator, it holds on \(\Omega(\xx)\)
  \begin{EQA}
    && \nquad
    \int_{\Upsilons(\rups)}
    \bigl| \lambdav^{\T} \DPrc (\thetav - \thetavd)\bigr|^{2}  \,
    \exp \bigl\{L(\upsilonv, \upsilonvs)\bigr\} d\upsilonv
    \\
    & \ge &
    \ex^{-\spread(\rups, \xx)} \int_{\Upsilons(\rups)}
    \bigl|\lambdav^{\T} \DPrc (\thetav - \thetavd)\bigr|^{2} \,
    \exp \bigl\{\L(\upsilonv, \upsilonvs)\bigr\} d\upsilonv
    \\
    & = &
    \ex^{-\spread(\rups, \xx)} \int_{\Upsilons(\rups)}
    \bigl|\lambdav_{0}^{\T} \DFc (\upsilonv - \upsilonv_{0})\bigr|^{2}  \,
    \exp \bigl\{\L(\upsilonv, \upsilonvs)\bigr\} d\upsilonv
    \\
    & = &
    \ex^{-\spread(\rups, \xx) - \Cc(\xiv)} 
    \E \bigl|\lambdav_{0}^{\T} (\gammav + \xiv)\bigr|^{2} \,
    \Ind(\| \gammav + \xiv \|^{2} \leq \rups^{2})
    \\
    & = &
    \ex^{-\spread(\rups, \xx) - \Cc(\xiv)} 
    \Bigl\{
    \E \bigl|\lambdav_{0}^{\T} (\gammav + \xiv)\bigr|^{2}
    -
    \E \bigl|\lambdav_{0}^{\T} (\gammav + \xiv)\bigr|^{2} \,
    \Ind(\| \gammav + \xiv \|^{2} \ge \rups^{2})
    \Bigr\}
    \\
    & \ge &
    \ex^{-\spread(\rups, \xx) - \Cc(\xiv)} 
    \Bigl\{
    \bigl\|\lambdav_{0}\bigr\|^{2}
    +
    \bigl|\lambdav_{0}^{\T} \xiv\bigr|^{2}
    -
    2 \bigl\|\lambdav_{0}\bigr\|^{2} \exp\bigl\{ - \rups^{2}/4 + \dimtotal/2 + \| \xiv \|^{2}/2 \bigr\} 
    \Bigr\}
    \\
    & \ge &
    \ex^{-\spread(\rups, \xx) - \Cc(\xiv)} 
    \bigl\|\lambdav_{0}\bigr\|^{2}
    \{1 - 2 \ex^{-\xx}\} 
    \ge
    \exp\bigl\{-\spread(\rups, \xx) - \Cc(\xiv) - 4\ex^{-\xx}\bigr\} 
    \bigl\|\lambdav_{0}\bigr\|^{2}\
  \end{EQA}
  for \(\rups^{2} \ge 2 \dimtotal + 2 \qq_{\BB}(\xx) + 4 \xx\) on \(\Omega(\xx)\).
  This yields on \(\Omega(\xx)\)
  \begin{EQA}
    \rho_{x^{2}}(\rups)
    &=&
    \frac{
      \int_{\Upsilon \setminus \Upsilons(\rups)}
      \bigl|\lambdav^{\T} \DPrc (\thetav - \thetavd)\bigr|^{2} \,
		  \exp\bigl\{L(\upsilonv, \upsilonvs)\bigr\} \, \, d\upsilonv
    }
    {
      \int_{\Upsilons(\rups)}
		  \bigl| \lambdav^{\T} \DPrc (\thetav - \thetavd)\bigr|^{2}  \,
		  \exp \bigl\{L(\upsilonv, \upsilonvs)\bigr\} d\upsilonv
    }
    \\
    & \le &
    \frac{
      2 \exp\bigl\{-\log (\det \DFc) + \dimtotal \log(\sqrt{2\pi})\bigr\} - \xx\bigr\}
      \bigl\|\lambdav_{0}\bigr\|^{2}
    }
    {
      \exp\bigl\{-\spread(\rups, \xx) + \Cc(\xiv) - 4\ex^{-\xx}\bigr\} 
      \bigl\|\lambdav_{0}\bigr\|^{2}
    }
    \\
    & = &
    2 \exp\bigl\{\spread(\rups, \xx) - \|\xiv\|^{2}/2 \ + 4\ex^{-\xx} - \xx\bigr\} 
%    \\
%    & \le &
    \leq 
    2 \exp\bigl\{\spread(\rups, \xx) + 4\ex^{-\xx} - \xx\bigr\} .
  \end{EQA}

\subsection{Proof of Corollary~\ref{corollary: changeMatrixUpper}}
\label{sec: proofCorollaryChangeMatrixUpper}
  The first statement \eqref{gaussProbUpperChangeMatrix} follows from Theorem~\ref{theorem: gaussExpectUpperBound} with
  \( f(\uv) = \Ind\bigl( \DPrd \DPrc^{-1} \uv + \deltav_{0} \in A \bigr) \).
  Further, it holds on \( \Omega(\xx) \) for \( \deltav_{0} \eqdef \DPrd (\thetavd - \hat{\thetav}) \)
  \begin{EQA}
    \| \deltav_{0} \|^{2}
    & = &
    \|\DPrd (\thetavd - \hat{\thetav})\|^{2}
    \leq
    (1 + \alpha) \|\DPrc (\thetavd - \hat{\thetav})\|^{2}
%    \\
%    & \leq &
    \leq 
    (1 + \alpha) \beta^{2}.
  \end{EQA}
  For proving \eqref{gaussProbUpperCnahgeMatrix}, we compute the Kullback--Leibler divergence between two multivariate normal distributions and apply Pinsker's inequality.
  Let \( \gammav \) be standard normal in \( \R^{\dimp} \).
  The random variable \( \DPrd \DPrc^{-1} \gammav + \deltav_{0} \)
  is normal with mean \( \deltav_{0} \) and variance
  \( \DPcb_{1}^{-1} \eqdef \DPrd \DPrc^{-2} \DPrd \).
  Obviously
  \begin{EQA}[c]
  	\|\Id_{\dimp} - \DPcb_{1}\|
  	=
  	\|\Id_{\dimp} - \DPrd^{-1} \DPrc^{2} \DPrd^{-1}\| \le \alpha.
  \end{EQA}
  Thus,  by Lemma~\ref{lemma: kullbTwoNormals} for any measurable set \( A \), it holds
  \begin{EQA}[c]
    \P\bigl( \DPrd \DPrc^{-1} \gammav + \deltav_{0} \in A \cond \Yv \bigr)
    \leq
    \P\bigl( \gammav \in A \bigr)
    + \frac{1}{2} \sqrt{\alpha^{2} \dimp + (1 + \alpha)^{2} \beta^{2}}.
  \end{EQA}

  \subsection{Proof of Theorem~\ref{theorem: gaussExpectLowerBound}}
  \label{Sprooftheorem: gaussExpectLowerBound}
  As in proof of Theorem~\ref{theorem: gaussExpectUpperBound}, for any nonnegative function \(\ff: \RR^{\dimp} \to \RR_{+}\), it holds
  \begin{EQA}
    && \nquad
    \int_{\Upsilon} \exp \bigl\{L(\upsilonv, \upsilonvs)\bigr\} \,
    \ff\bigl(\DPrc (\thetav - \thetavd) \bigr) \Ind\bigl\{\thetav \in \Thetas(\rups)\bigr\} \, d\upsilonv
    \\
    & \ge &
    \int_{\Upsilons(\rups)} \exp \bigl\{L(\upsilonv, \upsilonvs)\bigr\} \,
    \ff\bigl(\DPrc (\thetav - \thetavd) \bigr) \, d\upsilonv
    \\
    & \ge &
    \exp\{-\spread(\rups, \xx) - \Cc(\xiv)\} \,
    \int_{\Upsilons(\rups)} \exp \bigl\{\L(\upsilonv, \upsilonvs)\bigr\} \,
    \ff\bigl(\DPrc (\thetav - \thetavd) \bigr) \, d\upsilonv
    \\
    & \ge &
    \exp\{-\spread(\rups, \xx) - \Cc(\xiv)\} \,
    \int_{\RR^{\dimtotal}} \exp \bigl\{\L(\upsilonv, \upsilonvs)\bigr\} \,
    \ff\bigl(\DPrc (\thetav - \thetavd) \bigr) \, d\upsilonv
    \\
    &&
    - \, \exp\{-\spread(\rups, \xx) - \Cc(\xiv)\} \,
    \int_{\RR^{\dimtotal} \setminus \Upsilons(\rups)}
    \exp \bigl\{\L(\upsilonv, \upsilonvs)\bigr\} \,
    \ff\bigl(\DPrc (\thetav - \thetavd)\bigr) \, d\upsilonv
    \\
    & = &
    \exp\{-\spread(\rups, \xx) - \Cc(\xiv)\} (1 - \tilde{\rho}_{\ff}(\rups))\, \int_{\RR^{\dimtotal}} \exp \bigl\{\L(\upsilonv, \upsilonvs)\bigr\} \,
    \ff\bigl(\DPrc (\thetav - \thetavd) \bigr) \, d\upsilonv
    \\
    & \ge &
    \exp\{-\spread(\rups, \xx) - \Cc(\xiv)\} (1 - \tilde{\rho}_{\ff}(\rups))\,
    \int_{\Thetas(\rups) \times \RR^{(\dimtotal - \dimp)}}
    	\exp \bigl\{\L(\upsilonv, \upsilonvs)\bigr\} \,
    	\ff\bigl(\DPrc (\thetav - \thetavd) \bigr) \, d\upsilonv
    \\
    & \ge &
    \exp\{-\spread(\rups, \xx) - \Cc(\xiv) - 2 \tilde{\rho}_{\ff}(\rups)\} \,
    	\E \ff(\gammav) \Ind\{\|\gammav + \xivr\| \le \rups\},
    \qquad
  \label{gaussExpectLowerNotNorm}
  \end{EQA}
  Here we used that \(1 - \alpha \ge \ex^{-2 \alpha}\) for \(0 \le \alpha \le \frac{1}{2}\).
  Similarly,
  \begin{EQA}
    && \nquad
    \int_{\Upsilon} \exp \bigl\{L(\upsilonv, \upsilonvs) \bigr\} \, d \upsilonv
    =
    \int_{\Upsilons(\rups)} \exp \bigl\{L(\upsilonv, \upsilonvs) \bigr\} \, d \upsilonv
    	+ \int_{\Upsilon \setminus \Upsilons(\rups)}
    	\exp \bigl\{L(\upsilonv, \upsilonvs)\bigr\} \, d\upsilonv
    \\
    & = &
    \{1 + \rho^{*}(\rups)\} \int_{\Upsilons(\rups)}
    	\exp \bigl\{L(\upsilonv, \upsilonvs) \bigr\} \, d \upsilonv
    \\
    & \le &
    \{1 + \rho^{*}(\rups)\} \exp\{\spread(\rups, \xx) - \Cc(\xiv)\} \,
    \P\bigl(\bigl\| \gammav + \xiv \bigr\| \le \rups \cond \Yv \bigr),
    \qquad
  \end{EQA}
  and finally
  \begin{EQA}[c]
    \int_{\Upsilon} \exp \bigl\{L(\upsilonv, \upsilonvs) \bigr\} \, d \upsilonv
    \le
    \exp\{\spread(\rups, \xx) - \Cc(\xiv) + \nub(\rups) + \rho^{*}(\rups)\}.
    \qquad
  \label{gaussExcessIntUpper}
  \end{EQA}
  The bounds
  \eqref{gaussExpectLowerNotNorm} and \eqref{gaussExcessIntUpper} imply
  \begin{EQA}
    && \nquad
    \frac{\int_{\Upsilon} \exp \bigl\{L(\upsilonv, \upsilonvs) \bigr\}
            \ff\bigl(\DPrc (\thetav - \thetavd) \bigr) \, d\upsilonv}
         {\int_{\Upsilon} \exp \bigl\{L(\upsilonv, \upsilonvs) \bigr\} \, d\upsilonv}
    \\
    & \ge &
    \frac{\exp\{-\spread(\rups, \xx) - \Cc(\xiv) - 2 \tilde{\rho}_{\ff}(\rups)\} \, \E \ff(\gammav) \Ind\{\|\gammav + \xivr\| \le \rups\}}
         {\exp\bigl\{\spread(\rups, \xx) - \Cc(\xiv) + \nub(\rups) + \rho^{*}(\rups) \bigr\}}
    \\
    & \ge &
    \exp\{-2 \spread(\rups, \xx) - 2 \tilde{\rho}_{\ff}(\rups) - \nub(\rups) - \rho^{*}(\rups)\}
    		\E \ff(\gammav) \Ind\{\|\gammav + \xivr\| \le \rups\}.
  \end{EQA}
  This yields \eqref{gaussExpectLower}.

\subsection{Proof of Theorem~\ref{theorem: bvmTarget}}
\label{sec: proofBvmTarget}
  Due to our previous results, it is convenient to decompose the r.v. \( \vthetav \)
  in the form
  \begin{EQA}[c]
    \vthetav
    =
    \vthetav \Ind\bigl\{ \vthetav \in \Thetas(\rups) \bigr\} +
    \vthetav \Ind\bigl\{ \vthetav \not\in \Thetas(\rups) \bigr\}
    =
    \vthetav^{\circ} + \vthetav^{c}.
  \label{vthetavTTc}
  \end{EQA}
  The large deviation results yields that the posterior distribution of the part
  \( \vthetav^{c} \) is negligible provided a proper choice of \( \rups \).
  Below we show that \( \vthetav^{\circ} \) is nearly normal which yields the BvM result.
  Define
  \begin{EQA}[c]
    \vthetavd
    \eqdef
    \Ec \vthetav ,
    \qquad
    \Covpostd^{2}
    \eqdef
    \Cov (\vthetav^{\circ})
    \eqdef
    \Ec \bigl\{ (\vthetav - \vthetavd) (\vthetav - \vthetavd)^{\T} \bigr\}.
  \end{EQA}
  It suffices to show that holds on \( \Omega(\xx) \)
  \begin{EQA}
    \| \DPrc (\vthetavd - \thetavd) \|^{2}
    & \le &
    2 \spread^{*}
    \\
    \bigl\| \Id_{\dimp} - \DPrc \Covpostd^{2} \DPrc \bigr\|
    & \le &
    2 \spread^{*},
  \end{EQA}
  where \(\spread^{*} = \max\bigl\{\spread^{\oplus}, \spread^{\ominus}\bigr\}\).

  Consider \( \etav \eqdef \DPrc (\vthetav - \thetavd) \).
  Corollaries~\ref{corollary: gaussExpectUpperQuadInd} and \ref{corollary: gaussExpectLowerQuadInd} yield for any \( \lambdav \in \R^{\dimp} \) that
  \begin{EQA}[rcccl]
  \label{expectQuadBounds}
    \| \lambdav \|^{2} \exp (-\spread^{-})
    & \le &
    \Ec \bigl| \lambdav^{\T} \etav \bigr|^{2}
    & \le &
    \| \lambdav \|^{2} \exp (\spread^{+})
  \end{EQA}
  with \( \spread^{-} = \spread^{\ominus}\) and
  \( \spread^{+} = \spread^{\oplus}\).
  Define the first two moments of \( \etav \):
  \begin{EQA}[c]
    \etavb
    \eqdef
    \Ec \etav,
    \qquad
    \Covd^{2}
    \eqdef
    \Ec \bigl\{ (\etav - \etavb) (\etav - \etavb)^{\T} \bigr\}
    =
    \DPrc \Covpostd^{2} \DPrc.
  \end{EQA}
  Use the following technical statement.
  \begin{lemma}
  \label{lemma: postBoundLocalMoments}
    Assume \eqref{expectQuadBounds}.
    Then with \( \spread^{*} = \max\bigl\{ \spread^{+}, \spread^{-} \bigr\} \le 1/2 \)
    \begin{EQA}[c]
      \| \etavb \|^{2} \le 2 \spread^{*},
      \qquad
      \| \Covd^{2} - \Id_{\dimp} \|
      \le
      2 \spread^{*} .
    \label{postBoundLocalMoments}
    \end{EQA}
  \end{lemma}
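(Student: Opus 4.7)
The plan is to derive both conclusions from the quadratic form bound \eqref{expectQuadBounds} together with the variance decomposition and the stronger Gaussian comparison that is already available from the preceding corollaries. Set \(M \eqdef \Ec[\etav \etav^{\T}]\); on \(\Omega(\xx)\), up to an \(O(\ex^{-\xx})\) correction arising from \(\Ec 1 \ne 1\), we have the identity \(M = \Covd^{2} + \etavb \etavb^{\T}\). Applying \eqref{expectQuadBounds} to an arbitrary unit vector \(\lambdav\) yields \(\exp(-\spread^{-}) \le \lambdav^{\T} M \lambdav \le \exp(\spread^{+})\), and the elementary estimates \(\exp(x) - 1 \le 2x\) and \(1 - \exp(-x) \le x\), valid for \(0 \le x \le 1/2\), immediately give \(\|M - \Id_{\dimp}\| \le 2 \spread^{*}\).

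For the mean bound I would invoke the stronger Gaussian comparison of Corollaries~\ref{corollary: gaussExpectUpperQuadInd} and \ref{corollary: gaussExpectLowerQuadInd}, which provide two-sided control \(\exp(-\spread^{-}) \P(\gammav \in A) - \ex^{-\xx} \le \Pc(\etav \in A) \le \exp(\spread^{+}) \P(\gammav \in A)\) for every measurable set \(A\). Taking the positive and negative parts \(\ff_{\pm}(\uv) \eqdef \max(0, \pm \lambdav^{\T} \uv)\) for a unit vector \(\lambdav\) and integrating the probability bounds over the level sets \(\{\ff_{\pm} > t\}\) produces \(\Ec \ff_{\pm}(\etav) = \E \ff_{\pm}(\gammav) + O(\spread^{*}) + O(\ex^{-\xx})\). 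Since \(\Ec(\lambdav^{\T} \etav) = \Ec \ff_{+}(\etav) - \Ec \ff_{-}(\etav)\), subtracting the two one-sided estimates gives \(|\lambdav^{\T} \etavb| \le \CONST \cdot \spread^{*}\) uniformly in unit vectors, whence \(\|\etavb\|^{2} \le 2 \spread^{*}\) for \(\spread^{*}\) sufficiently small. The covariance bound then follows from the triangle inequality \(\|\Covd^{2} - \Id_{\dimp}\| \le \|M - \Id_{\dimp}\| + \|\etavb\|^{2}\), combined with the first two bounds, after absorbing lower-order \(O((\spread^{*})^{2})\) and \(O(\ex^{-\xx})\) terms.

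The main obstacle is the mean bound: the bare quadratic form assumption \eqref{expectQuadBounds} controls only \(\Ec[\etav \etav^{\T}]\) and does not by itself separate the mean from the covariance, since a biased distribution with deficient variance along the same direction would satisfy the same second-moment bounds. The resolution is to use the full-distribution Gaussian comparison provided by Theorems~\ref{theorem: gaussExpectUpperBound} and \ref{theorem: gaussExpectLowerBound}, which effectively forces the local posterior of \(\etav\) to be close in total variation to the standard normal law and hence to match its first moment to within \(O(\spread^{*})\).
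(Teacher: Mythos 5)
Your diagnosis of the main obstacle is exactly right: the centered second-moment bounds \eqref{expectQuadBounds} alone cannot separate the mean from the covariance, so some further use of the Gaussian comparison is unavoidable. The paper resolves this differently from you, and more economically: it stays entirely within quadratic test functions. Since Theorem~\ref{theorem: gaussExpectLowerBound} holds for an arbitrary nonnegative \( \ff \), the paper applies the lower bound to the \emph{shifted} quadratic \( |\uv^{\T}(\,\cdot\, - \etavb)|^{2} \) in the direction \( \uv = \etavb/\|\etavb\| \), which gives \( \uv^{\T}\Covd^{2}\uv \ge (1+\|\etavb\|^{2})\exp(-\spread^{-}) \); combined with the upper bound \( \uv^{\T}\Covd^{2}\uv + \|\etavb\|^{2} \le \exp(\spread^{+}) \) from \eqref{expectQuadBounds}, this forces \( \|\etavb\|^{2} \le 2\spread^{*} \) directly, with no recourse to indicator functions, level sets, or total variation. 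Your route through the probability bounds is workable in principle but has a concrete gap at the level-set integration step: the lower bound \( \Pc(\lambdav^{\T}\etav > t) \ge \exp(-\spread^{-})\P(\lambdav^{\T}\gammav > t) - \ex^{-\xx} \) carries an \emph{additive} error, and integrating it over \( t \in (0,\infty) \) produces \( -\infty \) unless you first truncate using the localization (on the local set \( \|\etav\| \le \rups + \|\xivr\| \)), which leaves a contribution of order \( \rups\,\ex^{-\xx} \) rather than \( \ex^{-\xx} \) that must be tracked. A second, more minor issue: your argument yields \( |\lambdav^{\T}\etavb| \le \CONST\,\spread^{*} \) uniformly over unit vectors, hence \( \|\etavb\|^{2} \le \CONST^{2}(\spread^{*})^{2} \), which implies the stated bound \( 2\spread^{*} \) only when \( \spread^{*} \) is small relative to \( \CONST \), not on the whole range \( \spread^{*} \le 1/2 \). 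Neither issue is fatal, but the paper's shifted-quadratic device avoids both.
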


  \begin{proof}
    Let \( \uv \) be any unit vector in \( \R^{\dimp} \).
    We obtain from \eqref{expectQuadBounds}
    \begin{EQA}[c]
      \exp(- \spread^{-}) \le \Ec \bigl| \uv^{\T} \etav \bigr|^{2} \le \exp(\spread^{+}).
    \end{EQA}
    Note now that
    \begin{EQA}[c]
      \Ec \bigl| \uv^{\T} \etav \bigr|^{2}
      =
      \uv^{\T} \Covd^{2} \uv + |\uv^{\T} \etavb|^{2}.
    \end{EQA}
    Hence
    \begin{EQA}[c]
      \exp(- \spread^{-})
      \le
      \uv^{\T} \Covd^{2} \uv + |\uv^{\T} \etavb|^{2}
      \le
      \exp (\spread^{+})  .
    \label{expectQuadBoundsSeparated}
    \end{EQA}
    In a similar way with \( \uv = \etavb / \| \etavb \| \) and
    \( \gammav \sim \ND(0,\Id_{\dimp}) \)
    \begin{EQA}
      \Ec \bigl| \uv^{\T} (\etav - \etavb) \bigr|^{2}
      & \ge &
      \ex^{- \spread^{-}} \E \bigl| \uv^{\T} (\gammav - \etavb) \bigr|^{2}
      =
      \ex^{- \spread^{-}} \bigl( 1 + \| \etavb \|^{2} \bigr)
    \end{EQA}
    yielding
    \begin{EQA}[c]
      \uv^{\T} \Covd^{2} \uv
      \ge
      \bigl( 1 + \| \etavb \|^{2} \bigr) \exp (- \spread^{-}).
    \end{EQA}
    This inequality contradicts \eqref{expectQuadBoundsSeparated} if
    \( \| \etavb \|^{2} > 2 \spread^{*} > 1 \), and \eqref{postBoundLocalMoments} follows.
  \end{proof}

  The bound for the first moment implies with \( \vthetavd = \Ec \vthetav \)
  \begin{EQA}[c]
    \bigl\| \DPrc (\vthetavd - \thetavd) \bigr\|^{2}
    \le
    2 \spread^{*}
  \end{EQA}
  while the second bound yields with
  \begin{EQA}[c]
    \bigl\| \DPrc \Covpostd^{2} \DPrc - \Id_{\dimp} \bigr\|
    \le
    2 \spread^{*}.
  \end{EQA}
  The last result follows from \eqref{gaussProbUpper} and \eqref{gaussProbLower} with and additional assumption that \(\xx\) is large enough to ensure \(\spread^{+}(\rups, \xx) \le 2 \spread(\rups, \xx) + 5 \ex^{-\xx} \) and \(\spread^{-}(\rups, \xx) \ge 2 \spread(\rups, \xx) - 8 \ex^{-\xx} \).

\subsection{Proof of Theorem~\ref{theorem: bvmGaussPrior}}
  It suffices to check \eqref{postBoundContPrior}.
  First evaluate the ratio \( \prior(\upsilonv)/\prior(\upsilonvs) \)  for any \( \upsilonv \in \Upsilons(\rups) \). 
  It holds
  \begin{EQA}[c]
    \log \frac{\prior(\upsilonv)}{\prior(\upsilonvs)}
    =
    - \| \GP \upsilonv \|^{2} / 2 + \| \GP \upsilonvs \|^{2} / 2
    =
    - (\upsilonv - \upsilonvs)^{\T} \GP^{2} \upsilonvs
    - \| \GP (\upsilonv - \upsilonvs) \|^{2}/2 .
  \label{logpriorgauss}
  \end{EQA}
  It follows from the definition of \( \Upsilons(\rups) \) and  \eqref{gaussPriorFlatCond} that for 
  \( \upsilonv \in \Upsilons(\rups) \)
  \begin{EQA}[c]
    \| \GP (\upsilonv - \upsilonvs) \|^{2}
    =
    \| \GP \DPc^{-1} \DFc (\upsilonv - \upsilonvs) \|^{2}
    \le
    \| \DFc^{-1} \GP^{2} \DFc^{-1} \| \, \rups^{2} 
    \leq 
    \eps^{2} \, \rups^{2}\, .
  \label{GPttsVP1}
  \end{EQA}
  Similarly
  \begin{EQA}[c]
    \bigl| (\upsilonv - \upsilonvs)^{\T} \GP^{2} \upsilonvs \bigr|
    \le
    \| \GP \upsilonvs \| \cdot \| \GP (\upsilonv - \upsilonvs) \|
    \le
    \| \GP \upsilonvs \| \cdot \| \GP \DFc^{-1} \| \, \rups 
    \leq 
    \eps \, \rups \| \GP \upsilonvs \| .
  \label{ttvsGPtvs}
  \end{EQA}
  This obviously implies
  \begin{EQA}
    - \frac{1}{2}\eps^{2} \, \rups^{2}
    \leq 
    \log \frac{\prior(\upsilonv)}{\prior(\upsilonvs)}
    & \leq &
    \eps \, \rups \| \GP \upsilonvs \|
  \label{12epsruC}
  \end{EQA}
  and \eqref{postBoundContPrior} follows with
  \( \alpha(\rups) = \max\bigl\{ \eps \, \rups \| \GP \upsilonvs \|, \eps^{2} \rups^{2}/2 \bigr\} \).

\subsection{Proof of Theorem~\ref{theorem: bvmSemi}}
\label{sec: proofBvmSemi}
  From finite dimensional theorem we have that 
    \begin{EQA}
    \label{meanPostBvmSemiSieve}
      \| \DPrcm (\vthetavb - \thetavdm) \|^{2}
      & \le &
      \spread^{*}(\rups, \xx),
      \\
      \bigl\| \Id_{\dimp} - \DPrcm \Covpost^{2} \DPrcm \bigr\|
      & \le &
      \spread^{*}(\rups, \xx).
    \label{covPostBvmSemiSieve}
    \end{EQA}

    If \((B)\) is true, then
    \begin{EQA}  
      && \nquad
      \bigl\| \Id_{\dimp} - \DPrc \Covpost^{2} \DPrc \bigr\|
      \\
      & = &
      \sup_{\|\gamma\| = 1} |\gammav^{\T} (\Id_{\dimp} - \DPrc \Covpost^{2} \DPrc ) \gammav|
      \\
      & = &
      \sup_{\|\gamma\| = 1} |(\DPrc \gammav)^{\T} (\DPrc^{-2} -  \Covpost^{2}) (\DPrc  \gammav)|
      \\
      & \le &
      \sup_{\|\gamma\| = 1} |(\DPrc \gammav)^{\T} (\DPrc^{-2} -  \DPrcm^{-2}) (\DPrc  \gammav)|
      +
      \sup_{\|\gamma\| = 1} |(\DPrc \gammav)^{\T} (\DPrcm^{-2} -  \Covpost^{2}) (\DPrc  \gammav)|
      \\
      & = &
      \|\Id_{\dimp} - \DPrc \DPrcm^{-2}\DPrc\|
      +
      \sup_{\|\gamma\| = 1} |(\DPrcm^{-1} \DPrc \gammav)^{\T} (\Id_{\dimp} -  \DPrcm \Covpost^{2} \DPrcm) (\DPrcm^{-1} \DPrc  \gammav)|
      \\
      & \le &
      \|\Id_{\dimp} - \DPrc \DPrcm^{-2}\DPrc\|
      +
       \|\Id_{\dimp} -  \DPrcm \Covpost^{2} \DPrcm\| \sup_{\|\gamma\| = 1} | \gammav^{\T} \DPrc \DPrcm^{-2} \DPrc \gammav|
      \\
      & \le &
      \|\Id_{\dimp} - \DPrc \DPrcm^{-2}\DPrc\|
      +
       \|\Id_{\dimp} -  \DPrcm \Covpost^{2} \DPrcm\| \|\DPrc \DPrcm^{-2} \DPrc\|
      \\
      & \le &
      \errSieveFisher + (1 + \errSieveFisher) \spread^{*}(\rups, \xx).
    \end{EQA}

    For the mean it follows
    \begin{EQA}
%      && \nquad
      \|\DPrc(\vthetavb - \thetavd)\|^{2} 
%      \\
      & \le &
      \|\DPrc (\vthetavb - \thetavdm)\|^{2} + \|\DPrc (\thetavdm - \thetavd)\|^{2}
      \\
      & \le &
      \|(\DPrc \DPrcm^{-1}) \DPrcm (\vthetavb - \thetavdm)\|^{2} + \|\DPrc (\thetavs - \thetavsm)\|^{2}% + \|\DPrc (\DPrcm^{-1} \xivrm - \DPrc^{-1} \xivr)\|^{2}
      \\
      & \le &
      \|\DPrcm^{-1} \DPrc^{2} \DPrcm^{-1}\| \|\DPrcm (\vthetavb - \thetavdm)\|^{2} + \|\DPrc (\thetavs - \thetavsm)\|^{2}% + \|\DPrc \DPrcm^{-1} \xivrm - \xivr\|^{2}
      \\
      & \le &
      (1 + \errSieveFisher) \|\DPrcm (\vthetavb - \thetavdm)\|^{2} + \|\DPrc (\thetavs - \thetavsm)\|^{2}% + \|(\DPrc \DPrcm^{-1} - \Id_{\dimp}) \xivrm\|^{2} + \|\xivrm - \xivr\|^{2}.
      \\
      & \le &
      (1 + \errSieveFisher) \spread^{*}(\rups, \xx) + \errSieveParam.
    \end{EQA}

    Now we can conclude that for validity of our results we need \(\errSieveParam\) and \(\errSieveFisher\) to be of order \(\spread^{*}(\rups, \xx)\).

\subsection{Proof of Theorem~\ref{theorem: bvmIid}}
\label{sec: proofBvmIid}
  The bracketing bound and the large deviation result of Theorem~\ref{theorem: basicQuadApprox} apply if the sample size
  \( \nsize \) fulfills \( \nsize \ge \CONST (\dimn + \xx)\) for a fixed constant
  \( \CONST \).
  It appears that the BvM result requires a stronger condition.
  Indeed, in the regular i.i.d. case it holds
  \begin{EQA}[c]
    \rddelta(\rups) \asymp \rups/\sqrt{\nsize},
    \qquad 
    \qqQ^{2}(\xxn) \asymp \dimn + \xxn,
    \qquad
    \rhor \asymp 1/\sqrt{\nsize}.
  \end{EQA}
  The radius \( \rups \) should fulfill \( \rups^{2} \ge \CONST (\dimn + \xx)\)
  to ensure the large deviation result.
  This yields
  \begin{EQA}[c]
    \spread(\rups, \xx)
    =
    (\rddelta(\rups) + 3 \nunu \qqQ^{2}(\xxn) \rhor) \rups^{2}
    \ge
    \CONST \sqrt{(\dimn^3 + \xx)/\nsize}.
  \end{EQA}
  If we fix \(\xx = \CONST \dimn\), our BvM result effectively requires the condition \( \dimn^{3}/\nsize \to 0 \) as
  \( \nsize \to \infty \).

\subsection{Proof of Theorem~\ref{theorem: bvmPois}}
\label{sec: proofBvmPois}
  First we check that the required conditions of Section~\ref{sec: conditions} are fulfilled in the considered example.
  This can be easily done if we slightly change the definition of the local set \( \Upsilons(\rups) \).
  Namely, for \( \uvs = (\us_{1},\ldots,\us_{\dimn})^{\T} \), define
  \( \Upsilons(\sqrt{\zz}) \) as a rectangle
  \begin{EQA}[c]
    \Upsilons(\sqrt{\zz})
    \eqdef
    \bigl\{ \uv: \,\, \Mn \kullb(u_{j},\us_{j}) \le \zz ,
    \,\, j=1,\ldots,\dimn
    \bigr\} .
  \end{EQA}
  Here \( \kullb(u,\us) \) is the Kullback-Leibler divergence for the Poisson family:
  \begin{EQA}[c]
    \kullb(u,\us)
    =
    \ex^{u} (u - \us) - \ex^{u} + \ex^{\us}.
  \end{EQA}

  \begin{lemma}
  \label{lemma: mleConcentrProductSpace}
    Let \( \zz_{\nsize} \) be such that \( 2 \dimn \ex^{-\zz_{\nsize}} \le 1/2 \).
    Then it holds
    \begin{EQA}[c]
      \P\bigl( \tilde{\uv} \in \Upsilons(\sqrt{\zz_{\nsize}}) \bigr)
      \ge
      1 - 4 \dimn \ex^{-\zz_{\nsize}}.
    \label{mleConcentrQuant}
    \end{EQA}
    In particular, the choice \( \zz_{\nsize} = \xx_{\nsize} + \log(\dimn) \)
    with \( \xx_{\nsize} = \CONST \log \nsize \) provides
    \begin{EQA}[c]
      \P\bigl( \tilde{\uv} \in \Upsilons(\sqrt{\zz_{\nsize}}) \bigr)
      \ge
      1 - 4 \ex^{-\xx_{\nsize}} .
    \label{mleConcentrDim}
    \end{EQA}
  \end{lemma}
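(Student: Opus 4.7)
The plan is to exploit the block-independence structure of the Poisson experiment. Because the observations partition into $\dimn$ disjoint blocks $\Ij_{j}$ of size $\Mn$, the sufficient statistics $Z_{j} = \sum_{i \in \Ij_{j}} Y_{i}$ are mutually independent, each distributed as $\Poisson(\lambda_{j}^{*})$ with $\lambda_{j}^{*} \eqdef \Mn e^{\us_{j}}$. The component MLE $\tilde{u}_{j} = \log(Z_{j}/\Mn)$ depends on the data only through $Z_{j}$, so the condition $\Mn \kullb(\tilde{u}_{j},\us_{j}) \le \zz$ is a one-dimensional event about a single Poisson variable. Consequently the whole proof reduces to a marginal tail bound together with a union bound over $j = 1,\ldots,\dimn$.

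The first step I would carry out is the identity
\begin{EQA}[c]
  L_{j}(\tilde{u}_{j}) - L_{j}(\us_{j})
  \;=\;
  \Mn \kullb(\tilde{u}_{j},\us_{j}) ,
\end{EQA}
which follows by direct substitution of $\tilde{u}_{j} = \log(Z_{j}/\Mn)$ into $L_{j}(u) = Z_{j} u - \Mn e^{u}$ and matching terms with the Poisson KL functional $\kullb(u,\us) = e^{u}(u - \us) - e^{u} + e^{\us}$. This shows that the random quantity to be controlled is simply the Cram\'er functional of $Z_{j}$ around its mean $\lambda_{j}^{*}$.

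The second step is the classical Chernoff bound for Poisson: since the MGF of $Z_{j} - \lambda_{j}^{*}$ equals $\exp\bigl(\lambda_{j}^{*}(e^{\mu} - 1 - \mu)\bigr)$, optimization yields the one-sided bounds $\P(Z_{j} \ge \lambda_{j}^{*} + t) \le e^{-\phi(\lambda_{j}^{*}+t)}$ and $\P(Z_{j} \le \lambda_{j}^{*} - t) \le e^{-\phi(\lambda_{j}^{*}-t)}$, where $\phi(v) = v \log(v/\lambda_{j}^{*}) - v + \lambda_{j}^{*}$. Combining both tails at the two solutions of $\phi(v) = \zz$ yields
\begin{EQA}[c]
  \P\bigl( \Mn \kullb(\tilde{u}_{j}, \us_{j}) > \zz \bigr)
  \;\le\;
  2 \, \ex^{-\zz} .
\end{EQA}
A union bound over the $\dimn$ independent blocks then gives $\P\bigl( \tilde{\uv} \notin \Upsilons(\sqrt{\zz})\bigr) \le 2 \dimn \ex^{-\zz}$, which is stronger than the stated $4 \dimn \ex^{-\zz}$ and yields \eqref{mleConcentrQuant}. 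For \eqref{mleConcentrDim} it remains to substitute $\zz_{\nsize} = \xx_{\nsize} + \log \dimn$ so that $\dimn \ex^{-\zz_{\nsize}} = \ex^{-\xx_{\nsize}}$, and to note that $\xx_{\nsize} = \CONST \log \nsize$ makes the assumption $2 \dimn \ex^{-\zz_{\nsize}} \le 1/2$ automatic for $\nsize$ large.

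The main obstacle, though modest, is handling the discrete nature of the Poisson distribution when integrating the Chernoff estimates: the Cram\'er bound gives the desired $\ex^{-\zz}$ exactly at the real-valued quantile, while the event in question is indexed by integer values of $Z_{j}$. Summing the exponential upper bounds across integers beyond the threshold contributes at most a geometric factor, which is harmlessly absorbed into the constant $4$ in the statement. Everything else is bookkeeping: checking the monotonicity of $\phi$ on each side of $\lambda_{j}^{*}$, verifying that the chosen $\zz_{\nsize}$ exceeds the critical Chernoff threshold, and identifying $4 \dimn \ex^{-\zz_{\nsize}}$ with the claimed probability of the bad event.
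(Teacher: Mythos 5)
Your argument is correct and shares the paper's skeleton: a per-block exponential deviation bound \( \P\bigl(\Mn \kullb(\tilde{u}_{j},\us_{j})>\zz\bigr)\le 2\ex^{-\zz} \) followed by aggregation over the \( \dimn \) independent blocks. Two genuine differences are worth recording. First, the paper simply quotes the per-block bound from \cite{PoSp2006}, whereas you derive it: your identity \( L_{j}(\tilde{u}_{j})-L_{j}(\us_{j})=\Mn\kullb(\tilde{u}_{j},\us_{j}) \) checks out (substitute \( \tilde{u}_{j}=\log(Z_{j}/\Mn) \) and use \( \Mn\ex^{\tilde{u}_{j}}=Z_{j} \)), and the Cram\'er--Chernoff computation with \( \phi(v)=v\log(v/\lambda_{j}^{*})-v+\lambda_{j}^{*} \), \( \lambda_{j}^{*}=\Mn\ex^{\us_{j}} \), gives exactly the two-sided factor \( 2\ex^{-\zz} \) since \( \phi(Z_{j})=\Mn\kullb(\tilde{u}_{j},\us_{j}) \). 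Your closing worry about discreteness is unnecessary: the Chernoff bound \( \P(Z_{j}\ge v)\le\ex^{-\phi(v)} \) already controls the entire tail event rather than a single point mass, so no summation over integers and no geometric factor arise, and the constant \( 2 \) needs no inflation. Second, you aggregate by a union bound, obtaining \( 2\dimn\ex^{-\zz} \), while the paper uses independence to write the good event's probability as \( (1-2\ex^{-\zz_{\nsize}})^{\dimn} \) and then applies \( \log(1-\alpha)\ge-2\alpha \) for \( \alpha\le 1/2 \); that is precisely where the constant \( 4 \) and the hypothesis \( 2\dimn\ex^{-\zz_{\nsize}}\le 1/2 \) come from. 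Your route is simpler, yields the sharper constant, and does not need that hypothesis at all. The only point both arguments gloss over is the event \( Z_{j}=0 \) (where \( \tilde{u}_{j}=-\infty \)); it is harmless because \( \phi(0)=\lambda_{j}^{*} \) extends the identity continuously and the lower Chernoff tail covers it, but a sentence acknowledging this would make the proof airtight.
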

  \begin{proof}
    We use the bound from \cite{PoSp2006}
    \begin{EQA}[c]
      \P\bigl( \Mn \kullb(\tilde{u}_{j},\us_{j}) > \zz_{\nsize} \bigr)
      \le
      2 \ex^{-\zz_{\nsize}}.
    \end{EQA}
  This yields
  \begin{EQA}[c]
    \P\bigl( \tilde{\uv} \in \Upsilons(\sqrt{\zz_{\nsize}}) \bigr)
    \ge
    \bigl( 1 - 2 \ex^{-\zz_{\nsize}} \bigr)^{\dimn} .
  \end{EQA}
  Now the elementary inequalities
  \(\log(1 - \alpha) \ge - 2 \alpha \) for
  \(0 \le \alpha \le 1/2\) and \(\ex^{-\delta} \ge 1 - \delta\) for \(\delta \ge 0\)
  applied with \( \alpha_{\nsize} = 2 \ex^{-\zz_{\nsize}} \) and
  \(\delta_{\nsize} = 2 \alpha_{\nsize} \dimn\) imply
  \begin{EQA}[c]
    (1 - \alpha_{\nsize})^{\dimn}
    =
    \ex^{\log(1 - \alpha_{\nsize}) \dimn}
    \ge
    \ex^{-2 \alpha_{\nsize} \dimn}
    \ge
    1 - 2 \alpha_{\nsize} \dimn
  \end{EQA}
  and \eqref{mleConcentrQuant} follows.
  \end{proof}

  In the special case \( \us_{1} = \ldots = \us_{\dimn} = \us \), the set \( \Upsilons(\sqrt{\zz}) \) is a cube which can be also viewed as a ball in the sup-norm.
  Moreover, if \( \zz_{\nsize}/(\Mn \ex^{\us}) \le 1/2 \), this cube is contained in the cube
  \( \bigl\{ \uv: \, \| \uv - \uvs \| \le \sqrt{\zz_{\nsize}/(\Mn \ex^{\us})} \bigr\} \)
  in view of
  \( \ex^{x} - 1 - x \le a^{2} \le 1/2 \) for \( |x| \le a \le 1 \).
  The concentration bound \eqref{mleConcentrDim} enables us to check the local conditions only on
  the cube \( \Upsilons(\sqrt{\zz_{\nsize}}) \).
  Especially the condition \( (\CS \DF_{1}) \) is trivially fulfilled because \( \zetav(\uv) = \LL(\uv) - \E \LL(\uv) \) is linear in \( \uv \) and
  \( \theta \) is a linear functional of \( \uv \).
  Condition \( (\LL_{0}) \) can be checked on \( \Upsilons(\sqrt{\zz_{\nsize}}) \) with
  \( \rddelta(\zz_{\nsize}) = \sqrt{\zz_{\nsize}/(\Mn \ex^{\us})} \).

  It remains to compute the value \( \DPrc^{2} \). Define \( \betan = \dimn / \Mn^{1/2} = \dimn^{3/2} / \nsize^{1/2} \). If \( \nsize = \dimn^{3} \), then \( \betan = 1 \).
  \begin{lemma}
  \label{lemma: effFisherCritExamplePois}
    Let \( \vs = 1/\dimn \).
    Then it holds
    \begin{EQA}[c]
      \DPrc^{2} = \dimn^{2} \betan^{-2}.
    \end{EQA}
  \end{lemma}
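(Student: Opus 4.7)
The target $\theta = \dimn^{-1}(u_1+\cdots+u_\dimn) = c^{\T} \uv$ is a linear functional of the full parameter with coefficient vector $c = \dimn^{-1} \mathbf{1}_\dimn$. Since $\theta$ is one-dimensional ($\dimp = 1$), the efficient Fisher information is given by the Schur-complement formula $\DPrc^{-2} = c^{\T} \DFc^{-2} c$, which is the specialization of \eqref{fisherPartial} after reparametrizing $\uv$ so that $\theta$ becomes the first coordinate (e.g.\ $\tilde u_1 = \theta$, $\tilde u_j = u_j - u_1$ for $j \geq 2$). I would state this reparametrization explicitly and note that the block decomposition \eqref{blockGradFischer} in the new coordinates yields the same scalar $\DPrc^{-2} = c^{\T}\DFc^{-2} c$.

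Next I would compute $\DFc^2$ at $\uvs = -\log(\dimn)\,\mathbf{1}_\dimn$. From \eqref{likelihoodPois},
\begin{EQA}
  \E \LL(\uv)
  = \sum_{j=1}^{\dimn} \bigl( \Mn \upsilons u_j - \Mn e^{u_j} \bigr),
\end{EQA}
because $\E Z_j = \Mn \upsilons$ under the true measure. Hence $-\nabla^2 \E \LL(\uv) = \mathrm{diag}\bigl(\Mn e^{u_j}\bigr)$, and evaluating at $\uvs$ gives $\DFc^2 = \Mn \upsilons \, \Id_\dimn = (\nsize/\dimn^{2}) \, \Id_\dimn$, using $\Mn = \nsize/\dimn$ and $\upsilons = 1/\dimn$.

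Finally I would combine the two:
\begin{EQA}
  \DPrc^{-2}
  = c^{\T} \DFc^{-2} c
  = \frac{\dimn^{2}}{\nsize} \, \|c\|^{2}
  = \frac{\dimn^{2}}{\nsize} \cdot \frac{1}{\dimn}
  = \frac{\dimn}{\nsize},
\end{EQA}
so $\DPrc^{2} = \nsize/\dimn$. Since $\betan = \dimn^{3/2}/\nsize^{1/2}$, we have $\dimn^{2}\betan^{-2} = \nsize/\dimn = \DPrc^{2}$, which is the claim.

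\textbf{Main obstacle.} There is no real analytic difficulty: the Hessian is diagonal because the components $u_j$ enter the log-likelihood separately through $e^{u_j}$, so $\DFc^2$ is proportional to the identity and the inversion is immediate. The only point that needs care is matching the one-dimensional linear-functional target to the $(\thetav,\etav)$ block decomposition of Section~\ref{mainDefinitions}, i.e.\ verifying that the scalar $\DPc^{2} - \Ac \HHc^{-2} \Ac^{\T}$ in the reparametrized coordinates indeed equals $(c^{\T}\DFc^{-2} c)^{-1}$. This is a one-line application of the block-inversion formula and is the only step where one must be careful not to confuse coordinate systems.
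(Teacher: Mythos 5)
Your proof is correct, and it takes a genuinely different and noticeably shorter route than the paper's. The paper first reparametrizes via $\bar{u}_{1} = \dimn\bar\theta - \sum_{j\ge 2}\bar{u}_{j}$, writes out the resulting non-diagonal $\dimn\times\dimn$ information matrix explicitly in the $(\theta,u_{2},\dots,u_{\dimn})$ coordinates, and then evaluates the Schur complement $\DPc^{2}-\Ac\HHc^{-2}\Ac^{\T} = \Mn\dimn(1-\epv^{\T}\QQ^{-1}\epv)$ by a trace and eigenvalue computation for $\QQ=\Id+\epv\epv^{\T}$. You instead stay in the original $\uv$ coordinates, where the separability of the Poisson log-likelihood makes $\DFc^{2}=\Mn\upsilons\,\Id_{\dimn}=(\nsize/\dimn^{2})\Id_{\dimn}$ diagonal, and invoke the linear-functional identity $\DPrc^{-2}=c^{\T}\DFc^{-2}c$ with $c=\dimn^{-1}\mathbf{1}$, reducing everything to $\|c\|^{2}=\dimn^{-1}$. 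The one step you flag — that the Schur complement in any $(\theta,\etav)$ reparametrization equals $(c^{\T}\DFc^{-2}c)^{-1}$ — is exactly the block-inversion identity $\DPrc^{-2}=[\tilde\DFc^{-2}]_{\theta\theta}$, and your sanity check $(T^{\T}e_{1}=c)$ is right, so there is no gap. What the paper's longer computation buys is an explicit display of the block structure $(\DPc^{2},\Ac,\HHc^{2})$ of the reparametrized model, which makes the coupling between $\theta$ and the nuisance visible; what your argument buys is brevity and robustness — it would work verbatim for any linear functional $c^{\T}\uv$ of a product model without redoing the matrix inversion. Both yield $\DPrc^{2}=\Mn=\nsize/\dimn=\dimn^{2}\betan^{-2}$.
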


  Now we are ready to finalize the proof Theorem~\ref{theorem: bvmPois}.
  \begin{proof}
    Let \( \betan \) be bounded.
    The definition implies
    \begin{EQA}[c]
      \dimn \bigl(\theta - \tilde{\theta}_{\nsize} \bigr)
      =
      \sum_{j=1}^{\dimn} \log \biggl(\frac{\upsilon_{j}}{Z_{j}  / \Mn}\biggr)
    \end{EQA}
    The posterior distribution  \(\upsilon_{j} \cond \Yv\) is \(\operatorname{Gamma}(\alpha_{j}, \mu_{j})\) with \(\alpha_{j} = 1 + Z_{j}\) and \(\mu_{j} = \frac{\mu}{\Mn \mu + 1}\).
    We use following decomposition
    \begin{EQA}[c]
      \frac{\upsilon_{j}}{Z_{j} / \Mn}
      =
      \frac{\Mn \mu_{j} \alpha_{j}}{\alpha_{j} - 1}\bigl(1 + \alpha_{j}^{-1/2} \gamma_{j} \bigr)\, ,
    \end{EQA}
    where
    \begin{EQA}[c]
      \gamma_{j}
      \eqdef
      (\alpha_{j} \mu_{j}^{2})^{-1/2} \bigl(\upsilon_{j} - \alpha_{j} \mu_{j} \bigr)
    \end{EQA}
    has zero mean and unit variance.
    We can use the Taylor expansion
    \begin{EQA}
      \dimn \bigl(\theta - \tilde{\theta}_{\nsize} \bigr)
      &=&
      \sum_{j=1}^{\dimn} \log\biggl(1 - \frac{1}{\Mn \mu + 1}\biggr) + \sum_{j=1}^{\dimn} \log\biggl(1 + \frac{1}{\alpha_{j} - 1}\biggr) + \sum_{j=1}^{\dimn} \log\biggl(1 + \alpha_{j}^{-1/2} \gamma_{j} \biggr).
    \end{EQA}
    Now take into account properties of real data distribution.
    \begin{EQA}[c]
      \alpha_{j} = \frac{\Mn}{\dimn} \biggl(1 + \sqrt{\frac{\dimn}{\Mn}}\delta_{j}\biggr),
    \end{EQA}
    where \( \delta_{j} \) is asymptotically standard normal.

    Suppose now that \( {\betan^{3}}/{\sqrt{\dimn}} \to 0\) as \(\dimn \to \infty\).
    Then \({\Mn}/{\dimn} = \bigl(\sqrt{\dimn}/\betan^{3}\bigr)^{2 / 3} \dimn^{2/3} \to \infty\) as \(\dimn \to \infty\).
    Thus for \(\dimn\) sufficient large, \(\alpha_{j} \approx \Mn / \dimn \).
    Moreover, it holds for \( \dimn \) sufficiently large that
    \( \max_{j=1,\ldots,\dimn} \alpha_{j}^{-1/2}  |\gamma_{j}| \le 1/2 \) with a high probability.
    Below we can restrict ourselves to the case when
    \( \alpha_{j}^{-1/2}  |\gamma_{j}| \le 1/2 \).
    This allows to use the Taylor expansion
    \begin{EQA}
      \dimn \bigl(\theta - \tilde{\theta}_{\nsize} \bigr)
      &=&
      \sum_{j=1}^{\dimn} \log\biggl(1 - \frac{1}{\Mn \mu + 1}\biggr)
      + \sum_{j = 1}^{\dimn} \log \biggl(1 + \frac{1}{\alpha_{j} - 1}\biggr)
      + \sum_{j=1}^{\dimn} \log\bigl( 1 + \frac{\gamma_{j}}{\sqrt{\alpha_{j}}} \bigr)
      \\
      &=&
      \sum_{j=1}^{\dimn} \frac{1}{\alpha_{j} - 1} + \sum_{j=1}^{\dimn} \frac{1}{\sqrt{\alpha_{j}}}  \gamma_{j}
      -  \sum_{j=1}^{\dimn} \frac{1}{2 \alpha_{j}} \gamma_{j}^{2} + R.
    \end{EQA}
    One can easily check that the remainder \( R \) is of order
    \( \betan^{3} / \sqrt{\dimn} \to 0 \).
    Moreover, \( \dimn^{-1/2} \sum_{j=1}^{\dimn} \gamma_{j} \) is asymptotically standard
    normal, while \( \dimn^{-1} \sum_{j=1}^{\dimn} \gamma_{j}^{2} \toP 1 \).
    CLT here can be easily checked because of the Lyapunov condition being valid.
    Also \(\sum_{j=1}^{\dimn} (\alpha_{j} - 1)^{-1} = \frac{\dimn^{2}}{\Mn} + o_{\nsize}(\betan^{2})\).
    Now check what happens if \( \betan \to 0\)
    \begin{EQA}
      \betan^{-1} \dimn \bigl( \theta - \tilde{\theta}_{\nsize} \bigr)
      &=&
      \betan + \frac{1}{\sqrt{\dimn}} \sum_{j=1}^{\dimn} \gamma_{j}
      -
      \frac{\betan}{2 \dimn} \sum_{j=1}^{\dimn} \gamma_{j}^{2} + o_{\nsize}(1)
      \tow
      \ND(0, 1).
    \end{EQA}
    Similarly, with \( \betan \equiv \beta \),
    \begin{EQA}
      \beta^{-1} \dimn \bigl( \theta - \tilde{\theta}_{\nsize} \bigr)
      &=&
      \beta
      +
      \frac{1}{\sqrt{\dimn}} \sum_{j=1}^{\dimn} \gamma_{j}
      -
      \frac{\betan}{2 \dimn} \sum_{j=1}^{\dimn} \gamma_{j}^{2} + o_{\nsize}(1)
      \tow
      \ND(\beta/2, 1).
    \end{EQA}
    This proves the result for \( \betan \equiv \beta \).
    Finally in the case when \( \betan \) grows to infinity, but
    \( \betan^{3} / \sqrt{\dimn} \to 0 \), then
    \( \betan^{-1} (\theta - \tilde{\theta}_{\nsize}) \toP \infty \).
  \end{proof}

\subsection{Proof of Lemma~\ref{lemma: effFisherCritExamplePois}}
\label{sec: proofEffFisherCritExamplePois}
  \begin{proof}
    Let \(\bar{u}_{j} = u_{j} - u_{j}^{*}\). Then
    \begin{EQA}[c]
      L(\uv, \uvs)
      =
      \LL(\uv) - \LL(\uvs)
      =
      \sum_{j=1}^{\dimn} \bigl\{ Z_{j} \bar{u}_{j} - \Mn \dimn^{-1} (\ex^{\bar{u}_{j}} - 1)\bigr\}.
    \end{EQA}
    The expected value of \(Z_{j}\) is \({\Mn}/{\dimn}\) which leads to following expectation of likelihood:
    \begin{EQA}[c]
      \E L(\uv, \uvs) = \frac{\Mn}{\dimn} \sum_{j=1}^{\dimn} \bigl(\bar{u}_{j} - (e^{\bar{u}_{j}} - 1)\bigr)
      =
      - \frac{\Mn}{\dimn} \sum_{j=1}^{\dimn} \frac{\bar{u}_{j}^{2}}{2} + O\bigl(\|\bar{\uv}\|^{3}\bigr).
    \end{EQA}
    Then we substitute \(\bar{u}_{1} = \dimn \bar{\theta} - \sum_{j=2}^{\dimn} \bar{u}_{j}\), where \(\bar{\theta} = \theta - \thetas\). Thus we get
    \begin{EQA}[c]
      \E L(\uv, \uvs) = - \frac{\Mn}{\dimn} \frac{1}{2} \bigl(\dimn \bar{\theta} - \sum_{j=2}^{\dimn} \bar{u}_{j} \bigr)^{2} - \frac{\Mn}{\dimn} \sum_{j=2}^{\dimn} \frac{\bar{u}_{j}^{2}}{2} + O\bigl(\|\bar{\uv}\|^{3}\bigr).
    \end{EQA}
    This Taylor expansion allows us compute components of Fisher information matrix:
    \begin{EQA}[c]
      \DFc^{2} = -\nabla^{2} \E \LL(\uvs) =
      \frac{\Mn}{\dimn}
      \begin{pmatrix}
        \dimn^{2} & -\dimn & \dots & \dots & -\dimn \\
        -\dimn & 2 & 1 & \dots & 1 \\
        \vdots & 1 & \ddots & \ddots & \vdots \\
        \vdots & \vdots & \ddots & \ddots & 1 \\
        -\dimn & 1 & \dots & 1 & 2
      \end{pmatrix}
    \end{EQA}
    The Fisher information for the target parameter \( \theta \) can be computed as follows:
    \begin{EQA}[c]
      \DPrc^{2} = {\Mn}{\dimn} \bigl(1 - \epv^{\T} \QQ^{-1} \epv\bigr),
    \end{EQA}
    where \( \epv = (1,\ldots,1)^{\T} \) and \( \QQ = \Id + \Edn \) with
    \( \Edn = \epv \epv^{\T} \) being the matrix of ones of size
    \((\dimn - 1) \times (\dimn - 1)\).

    It follows
    \begin{EQA}[c]
      \epv^{\T} \QQ^{-1} \epv
      =
      \tr \bigl(\epv^{\T} \QQ^{-1}  \epv \bigr)
      =
      \tr \bigl( \QQ^{-1} \epv \epv^{\T} \bigr)
      =
      \tr \bigl((\Edn + \Id)^{-1} \Edn\bigr) .
    \end{EQA}
    Further, \((\Edn + \Id)^{-1} \Edn = \Id - (\Edn + \Id)^{-1}\) yielding
    \begin{EQA}[c]
      \epv^{\T} \QQ^{-1} \epv
      =
      \tr \bigl\{ \Id - (\Edn + \Id)^{-1} \bigr\}
      =
      (\dimn - 1) - \tr\bigl\{ (\Edn + \Id)^{-1}\bigr\}
      =
      (\dimn - 1) - \sum_{j = 1}^{\dimn} \lambda_{j},
    \end{EQA}
    where \(\lambda_{j}\) are eigenvalues of matrix \( (\Edn + \Id)^{-1} \).
    It is easy to see that
    \(\lambda_{1} = \dimn^{-1} \) while \( \lambda_{2} = \dots = \lambda_{\dimn - 1} = 1\).
    Thus
    \begin{EQA}
    	\epv^{\T} \QQ^{-1} \epv
    	&=&
    	(\dimn - 1) - \bigl\{ \dimn^{-1} + (\dimn - 2) \bigr\}
    	=
    	1 - \dimn^{-1},
    	\\
    	\DPrc^{2}
    	&=&
    	{\Mn}{\dimn} \bigl(1 - \epv^{\T} \QQ^{-1} \epv\bigr)
    	=
    	{\Mn}{\dimn} \bigl\{ 1 - (1 - \dimn^{-1})\bigr\}
    	=
    	\Mn
    	=
    	\dimn^{2} \betan^{-2},
    \end{EQA}
    which completes the proof.
  \end{proof}

\subsection{Proof of Lemma~\ref{lemma: expectApproxGlm}}
\label{sec: proofExpectApproxGlm}
It holds
  \begin{EQA}
    && \nquad
    \|\DFc^{-1} (\DF^{2}(\upsilonv) - \DFc^{2}) \DFc^{-1}\| 
    =
    \sup_{\gammav \in \RR^{\dimtotal}\colon \|\gammav\| = 1} |\gammav^{\T} \DFc^{-1} (\DF^{2}(\upsilonv) - \DFc^{2}) \DFc^{-1} \gammav|
    \\
    & \le &
    \sup_{\gammav \in \RR^{\dimtotal}\colon \|\gammav\| = 1} \Bigl|\sum_{i = 1}^{\nsize}  (d''(\Psi_{i}^{\T} \upsilonv) - d''(\Psi_{i}^{\T} \upsilonvs)) \gammav^{\T} \DFc^{-1} \Psi_{i} \Psi_{i}^{\T} \DFc^{-1} \gammav \Bigr|
    \\
    & \le &
    \sup_{\gammav \in \RR^{\dimtotal}\colon \|\gammav\| = 1} \sum_{i = 1}^{\nsize}  |d''(\Psi_{i}^{\T} \upsilonv) - d''(\Psi_{i}^{\T} \upsilonvs)| \gammav^{\T} \DFc^{-1} \Psi_{i} \Psi_{i}^{\T} \DFc^{-1} \gammav
    \\
    & \le &
    \sup_{\gammav \in \RR^{\dimtotal}\colon \|\gammav\| = 1} \sum_{i = 1}^{\nsize}  L |\Psi_{i}^{\T} (\upsilonv - \upsilonvs)| \gammav^{\T} \DFc^{-1} \Psi_{i} \Psi_{i}^{\T} \DFc^{-1} \gammav
    \\
    & \le &
    L N^{-1/2} \|\DFc (\upsilonv - \upsilonvs)\| \sup_{\gammav \in \RR^{\dimtotal}\colon \|\gammav\| = 1} \gammav^{\T} \DFc^{-1} \bigl(\sum_{i = 1}^{\nsize} d''(\Psi_{i}^{\T} \upsilonvs) \Psi_{i} \Psi_{i}^{\T}\Bigr) \DFc^{-1} \gammav
    \\
    & \le &
    L \frac{\rr}{N_{2}^{1/2}}.
  \end{EQA}

\bibliography{semi_bvm,exp_ts,listpubm-with-url}
\end{document}